\newtheorem{theorem}{Theorem}
\newtheorem{lemma}[theorem]{Lemma}
\newtheorem{prop}[theorem]{Proposition}
\newtheorem{cor}[theorem]{Corollary}
\newtheorem{ex}[theorem]{Example}
\newtheorem{cex}[theorem]{Counterexample}
\newtheorem{remark}[theorem]{Remark}
\newtheorem{defi}[theorem]{Definition}
\newtheorem{problem}[theorem]{Problem}
\renewcommand{\phi}{\varphi}
\newcommand{\R}{\mathbb{R}}
\newcommand{\Z}{\mathbb{Z}}
\newcommand{\N}{\mathbb{N}}
\newcommand{\wP}{\widetilde{P}}
\newcommand{\vP}{\widetilde{P}_v}
\newcommand{\wE}{\widetilde{E}}
\renewcommand{\tilde}{\widetilde}
\renewcommand{\kappa}{\varkappa}
\newcommand{\F}{\mathcal{F}}
\newcommand{\si}{\sigma}
\newcommand{\Om}{\Omega}
\newcommand{\eps}{\varepsilon}
\newcommand{\vp}{\varphi}
\renewcommand{\epsilon}{\varepsilon}
\renewcommand{\mid}{\,|\,}
\newcommand{\1}{\mathbf{1}}
\newcommand{\cal}{\mathcal}
\newcounter{constante}
\newcommand{\con}[1]{
\immediate\write 1{\noexpand\newlabel{#1}{{\theconstante}{\theconstante}}}
                    c_{\theconstante}
                    \stepcounter{constante}
                   }
\begin{document}

\title[Zero-one law for frog processes] {A zero-one law for recurrence and transience of frog processes}

\author{Elena Kosygina and Martin P.W.\ Zerner} 

\thanks{\textit{2010 Mathematics Subject Classification.}  60K35,
  60J10 } \thanks{\textit{Key words:} dichotomy, egg model, frog
  model, infinite cluster, random conductances, random environment,
  random walk, recurrence, transience, transitive Markov chain,
  zero-one law}

\begin{abstract}
We provide sufficient conditions for the validity of a dichotomy, i.e.\ zero-one law, between
  recurrence and transience of general frog models.
In particular, the results cover frog models with i.i.d.\ numbers of frogs per
  site where the frog dynamics are given by quasi-transitive Markov chains or by random walks in a common random
  environment including
  super-critical percolation clusters on $\Z^d$.  
  We also give a sufficient and almost sharp condition for recurrence of uniformly elliptic frog processes on $\Z^d$.
Its proof uses the general
  zero-one law.
\end{abstract}
\maketitle
\section{Introduction}
Frog models are  interacting particle systems which loosely
can be described as follows. The system starts with a set of inactive
particles located at points of a countably infinite state space. At
time $0$ all particles at a certain initial site get activated. 
Each
active particle moves from site to site of the state space; when it visits a site containing
inactive particles, it wakes them all up. The newly activated particles start moving in a similar
fashion.  

The first results for this type of model (under the name ``egg model'')
were published in 1999, \cite[Section 2.4]{TW99}. The vivid term ``frog
model'', where each particle is called a frog, was coined
by R. Durrett (\cite[p.\,278]{Pop03}) and became standard. For a review of
some frog models and results up to 2003 see \cite{Pop03}. Since
then, a number of papers have appeared concerning recurrence and transience
 of
frog models, see \cite{GS09}, \cite{DP14}, \cite{HJJ14},
\cite{HJJ15}. For other aspects and generalizations of the model we
refer to \cite{LMP05}, \cite{KS08}, \cite{HW15}, \cite{GNR15}, and
the references therein.

The current paper addresses zero-one laws for recurrence and
transience of frog models in a very general setting. A site is said to
be recurrent (for a precise formulation see Definition \ref{rt}) if
activating all frogs at this site causes this site to be visited by
infinitely many frogs originating at distinct sites. Otherwise it is
said to be transient.  Note that according to our definition sites
without sleeping frogs, i.e.\ inactive particles, are transient. In
the study of recurrence (resp.\ transience) of the frog model one
often constructs an event on which a given site is recurrent (resp.\
transient) with positive probability. A zero-one law allows one to
immediately conclude that a site is then recurrent (resp.\ transient)
with probability 1, and that, moreover, under natural conditions, all
sites are recurrent (resp.\ transient) with probability 1.  The proof
of Theorem~\ref{log} below illustrates such an approach. For recent
results about zero-one laws for frog models and their
applications we refer to \cite{GS09}, \cite{HJJ14}, \cite{HJJ15}.
 
We hope that our general result Theorem \ref{012} covers a
sufficiently wide range of frog models to be
useful.\footnote{However, our results do not cover models
    where the numbers of frogs per site are inhomogeneous such as, for
    example, in \cite{Pop01}. See also
    Counterexample~\ref{pop} below.} Moreover, in spite of somewhat
  bulky notation caused by generality, the idea of the proof is
  simple. 

To illustrate the range of applicability of our main theorem,
Theorem~\ref{012}, we present two of its corollaries, see
  Section~\ref{3} for additional examples. Throughout we denote by
$\eta(x)$ the number of frogs which are initially sleeping at site $x$
of a countably infinite state space $V$.  We let $S_j(x,i)$ be the position
after $j\ge 0$ steps of the $i$-th frog, $i\ge 1,$ which was sleeping
at site $x\in V$ if it is woken up. In particular, $S_0(x,i)=x$ for
all $x\in V$ and $i\ge 1$. For the first result, recall that a
stochastic matrix $K$ with state space $V$ is called {\em
  transitive}\footnote{The notion of transitive Markov chains can be
  found, e.g., in \cite[p.\ 13]{Woe00}, \cite[Section 2.6.2]{LPW09},
  \cite[Definition 10.22]{LP}, and already in \cite[Chapter X, \S
  6]{D65}.} if for any $x,y\in V$ there is a permutation $\vp$ of $V$
such that $\vp(x)=y$ and $K(\vp(u),\vp(v))=K(u,v)$ for all $u,v\in V$.
\begin{theorem}[\rm\bf  Transitive Markov chains]\label{01}
  Let the frog trajectories $(S_j(x,i))_{j\ge 0}$, $x\in V, i\ge 1$,
  be Markov chains with a common transitive and irreducible transition
  matrix on a countably infinite state space $V$. Let the numbers
  $\eta(x)$, $x\in V,$ of sleeping frogs be identically distributed
  and assume that the quantities $\eta(x), (S_j(x,i))_{j\ge 0}$,
  $x\in V, i\ge 1,$ are independent. Then either with probability 1
  every $x\in V$ is transient or with probability 1 every $x\in V$
  with $\eta(x)\ge 1$ is recurrent.
\end{theorem}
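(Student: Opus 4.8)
The plan is to deduce Theorem~\ref{01} from the general zero-one law, Theorem~\ref{012}, by verifying the hypotheses of the latter in the transitive Markov chain setting; since I cannot see the precise statement of Theorem~\ref{012}, let me instead sketch a self-contained argument that captures what such a proof must contain. First I would fix a reference site $o\in V$ and introduce the event $R_o=\{o\text{ is recurrent}\}$. Because the transition matrix $K$ is transitive, for any $x,y\in V$ there is a permutation $\vp$ of $V$ with $\vp(x)=y$ intertwining $K$ with itself; the key observation is that this permutation, applied to the whole configuration (both to the i.i.d.\ field $(\eta(x))_{x}$ and to the collection of driving randomness of the walks $(S_j(x,i))_j$), is measure-preserving, because the $\eta(x)$ are identically distributed and everything is independent with a common chain law. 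Hence $\PP(R_x)=\PP(R_y)$ for all $x,y$, and more importantly the events $\{x\text{ is recurrent}\}$ all have the same probability $p$, and $p\in\{0,1\}$ would already give the dichotomy \emph{provided} we can show $p$ cannot lie strictly between $0$ and $1$.

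The main work is the zero-one law itself. The strategy I would use is a conditioning/resampling argument: suppose $\PP(R_o)>0$; I must show $\PP(R_o)=1$. Consider the frogs sleeping at $o$ together with the first frog woken at $o$. Exploring the cloud of activated frogs generated by $o$, one sees that recurrence of $o$ is, up to a null set, measurable with respect to the ``forward'' randomness emanating from $o$ and the $\eta$-field; the point is to bootstrap positive probability to full probability by using infinitely many independent ``attempts'' to make $o$ recurrent. Concretely, I would run the first frog from $o$ until it reaches a far-away site $x$; from $x$ it wakes a fresh, independent pile of $\eta(x)$ frogs whose onward dynamics are independent of everything explored so far, and by transitivity the sub-system seeded at $x$ has exactly the same law as the original system seeded at $o$. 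So on the event that $o$'s frog ever reaches $x$ — which, if $o$ sends a frog out at all, happens for infinitely many $x$ along its trajectory by irreducibility — we get infinitely many i.i.d.\ trials, each of which produces "$o$ is hit by infinitely many distinct-origin frogs" with probability at least $p$ (here one must be a little careful: the trial at $x$ gives recurrence of $x$, and one must route the resulting infinite flux back through $o$, which again uses irreducibility together with the fact that a walk from $x$ hits $o$ with positive probability). A Borel--Cantelli / $0$-$1$ argument then forces $p=1$.

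The hard part, and the step I expect to absorb most of the technical effort, is making the "infinitely many independent attempts" genuinely independent: the event $R_o$ is \emph{not} a tail event in any naive sense, because the same frogs are reused across attempts and the trajectories are correlated through the shared environment of woken frogs. The clean way around this is exactly the route Theorem~\ref{012} is presumably designed to provide — one isolates a sub-collection of frogs (a fresh pile at a distant site together with its descendants, with its own untouched randomness) whose behavior is i.i.d.\ across a sequence of ever-more-distant sites, and one shows recurrence of $o$ is implied by recurrence "seeded from" any one of them. Verifying that transitivity plus the i.i.d.\ $\eta$'s plus independence of the driving sequences yields precisely the invariance/regeneration structure demanded by Theorem~\ref{012} is then the content of the proof; the final "every $x$ with $\eta(x)\ge1$ is recurrent" clause follows by applying the dichotomy at $o$ and then transporting via $\vp$ to an arbitrary $x$, noting that $x$ with no sleeping frog is transient by definition.
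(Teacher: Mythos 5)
There is a genuine gap, and you have in fact put your finger on it yourself without closing it. Your plan is: (i) transitivity gives $P(R_x)=p$ for all $x$; (ii) if $p>0$, the frog started at $o$ visits infinitely many sites $x$, each of which seeds an ``independent trial'' of the whole system, so Borel--Cantelli forces recurrence. The paper explicitly warns that step (ii) is exactly what fails in general: the event $R_x$ depends on the frogs at \emph{every} site (the infinitely many frogs that must return to $x$ originate all over $V$), so the ``trials'' seeded at different sites along the trajectory are not independent in any usable sense, and it is not even clear that a single woken frog will a.s.\ hit \emph{some} site $x$ at which $R_x$ occurs, even granted that such sites exist. Your paragraph acknowledging that ``the same frogs are reused across attempts'' is precisely the obstruction; deferring its resolution to ``the route Theorem~\ref{012} is presumably designed to provide'' leaves the proof without its core content. (A smaller imprecision: the correct dichotomy is $P[R_v]=0$ versus $P[R_v]=P[\eta(v)\ge1]$, so $p\in\{0,1\}$ is not the right statement when $P[\eta(0)=0]>0$.)

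The ideas the paper actually uses, none of which appear in your sketch, are: (1) an ergodicity statement (a Hewitt--Savage-type zero-one law for i.i.d.\ families indexed by $V$ acted on by a permutation group with infinite orbits, Proposition~\ref{iid}) showing that if $P[R_u]>0$ for some $u$ then a.s.\ \emph{some} site is recurrent; (2) the introduction of an independent \emph{extra frog} at $v$ together with the ``environment viewed from the extra frog'': one shows that a quantitative proximity-to-a-recurrent-site functional $D(S_j(v,0))$ is identically distributed in $j$ under an equivalent measure, and a stopping-time/conditional-probability argument (Steps 2--3 of the proof of Theorem~\ref{012}) upgrades ``$D>0$ infinitely often'' to ``the extra frog a.s.\ hits a recurrent site''; and (3) the removal of the extra frog, done not by independence of trials but by the identity forced from $P[\eta(v)=0]\ge E\bigl[P[R_v^c\mid\F_0(v,1)]^2\bigr]\ge P[\eta(v)=0]$, which exploits that the extra frog and the first real frog at $v$ are i.i.d.\ given everything else. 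Without analogues of (1)--(3), the bootstrapping from $p>0$ to the full dichotomy does not go through.
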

Theorem~\ref{01}, in particular, implies the
validity of Conjecture 2 in \cite[Section 3]{GS09}. Since our setting
is slightly different from the one in \cite{GS09}, we  provide a
detailed discussion in Appendix~\ref{A}.

The proof of Theorem \ref{01} is particularly simple in the case
considered in \cite[Theorem 2.3]{GS09}, where the frogs perform
independent homogeneous nearest-neighbor random walks on $\Z$ with
drift to the right. It goes as follows. At the beginning we assign at
random to each frog a trajectory which the frog will follow once it
has been woken up.  For $x\in \Z$ denote by $R_x$ the event that
waking up the frogs at $x$, when everybody else is still asleep,
results in infinitely many frogs visiting $x$.  Since
$(\eta(x))_{x\in\Z}$ is i.i.d.\ and the frogs move independently of
each other, the sequence $(\1_{R_x})_{x\in\Z}$ is stationary and ergodic
with respect to (w.r.t.) the shifts on $\Z$.  Therefore, this sequence is
either a.s.\ identically equal to 0, in which case $P[R_x]=0$ for all
$x\in\Z$, or a.s.\ $\1_{R_x}=1$ for infinitely many $x\ge 1$.  In the
latter case, if we wake up the frogs at a site $v\in \Z$ with
$\eta(v)\ge 1$, then these frogs will a.s.\ be transient to the right
and will therefore also visit a site $x$ for which $R_x$ occurs and
wake up the frogs at $x$. This will trigger infinitely many frogs to visit
$x$ and therefore also $v$ so that $R_v$ occurs as well.

On $\Z^d$, $d\ge 2$, or in a more general setting the proof is not
that simple since it is not obvious that any woken up frog will a.s.\
hit a site $x$ for which $R_x$ occurs, if there are such sites. Since
the occurrence of the event $R_x$ may depend on trajectories of all
frogs, we need to introduce an ``extra frog'' and use
properties of the environment viewed from the extra frog to prove that
the extra frog hits a site $x$ for which $R_x$ occurs. After that, we
have to get rid of the extra frog.

In our second corollary the frogs are not independent under $P$.  We
consider random walks in a common environment of random conductances,
see \cite{Bis11} for a survey of this model.  Let $d\ge 2$ and assign
to each undirected edge $\{x,y\}$ between nearest neighbors
$x,y\in \Z^d$ a non-negative random variable $c(\{x,y\})$, called the
{\em conductance} of this edge.  Let
$q(x):=\sum_{y: \|y-x\|_1=1}c(\{x,y\})$ and set
$\varkappa(x,y):=c(\{x,y\})/q(x)$ for nearest neighbors $x$ and $y$ if
$q(x)\ne 0$, $\kappa(x,x)=1$ if $q(x)=0$, and $\varkappa(x,y):=0$ in
all other cases. Then $\varkappa$ is a random stochastic matrix. We
say that $x$ and $y$ are {\em connected} in environment $c$ if there is a
nearest neighbor path connecting $x$ and $y$ along which all
conductances are strictly positive. Let ${\cal C}(x)$ be the (random)
cluster consisting of $x$ and all points of $V$ which are connected to
$x$ in the environment $c$. Note that due to our definition of
recurrence only points in an infinite cluster can be recurrent.
\begin{theorem}[\rm\bf Random walks among random
  conductances] \label{rwrc} Assume that the family
  $(c(\{x,y\}))_{x,y\in\Z^d}$ is stationary and ergodic w.r.t.\ the
  shifts on $\Z^d$, $E[q(0)]<\infty$, and that there is a.s.\ at most
  one infinite cluster.\footnote{The last condition is satisfied, for
    instance, if $(\1_{c(\{x,y\})>0})_{x,y}$ is i.i.d..}  Given
  $(c(\{x,y\}))_{x,y}$ let the frog trajectories
  $(S_j(x,i))_{j\ge 0}$, $x\in \Z^d,\ i\ge 1$, be independent
  nearest-neighbor Markov chains with transition matrix $\varkappa$.
  Let the numbers $\eta(x)$, $x\in V,$ of sleeping frogs be i.i.d.\
  and independent of the conductances and the frog trajectories.  Then
  either with probability 1 every $x\in V$ is transient or with
  probability 1 every $x\in V$ with $\eta(x)\ge 1$ and
  $\#{\cal C}(x)=\infty$ is recurrent.
\end{theorem}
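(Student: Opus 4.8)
The plan is to derive Theorem~\ref{rwrc} from the general zero-one law Theorem~\ref{012} by verifying its hypotheses for the random-conductance setup. The underlying probability space carries the conductances $(c(\{x,y\}))_{x,y}$ together with, conditionally on them, the independent frog trajectories $(S_j(x,i))_{j\ge0}$ and the i.i.d.\ numbers $\eta(x)$ of sleeping frogs. The first step is to recognize that, although the frogs are \emph{not} independent under $P$ (they share the environment $c$), the whole family $(\eta(x),(S_j(x,i))_{j\ge0})_{x\in\Z^d,i\ge1}$ is stationary and ergodic with respect to the $\Z^d$-shifts: stationarity is immediate from the translation invariance of $\varkappa$ as a function of $c$ and the i.i.d.\ structure of $\eta$; ergodicity follows because the shift acts ergodically on $c$ by hypothesis, and conditionally on $c$ the added randomness (trajectories and $\eta$) is an independent i.i.d.\ product, so a standard argument (e.g.\ approximating shift-invariant events by cylinder events and using the conditional independence) upgrades ergodicity of $c$ to ergodicity of the full system. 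This provides the stationary ergodic structure that Theorem~\ref{012} requires.

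Next I would address the ``extra frog'' / environment-viewed-from-the-particle mechanism that, as the authors explain after Theorem~\ref{01}, is the heart of the argument in dimension $d\ge2$: one must show that a frog started at a site $v$ with $\eta(v)\ge1$ lying in the (a.s.\ unique) infinite cluster will, with positive probability, reach \emph{some} site $x$ at which the ``local recurrence'' event $R_x$ occurs, assuming such sites exist. In the conductance model the natural tool is the environment viewed from the walk, whose invariant measure is the reversible measure with density proportional to $q(\cdot)$; the condition $E[q(0)]<\infty$ guarantees this invariant measure is (after normalization on the infinite cluster) a probability measure absolutely continuous with respect to $P$, which is precisely what makes the environment-from-the-particle process of a single frog stationary and ergodic and lets one transfer the ``positive density of good sites'' statement along the trajectory of the extra frog. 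Transience/recurrence of the single conductance walk on the infinite cluster is not needed — only that the walk does not get trapped, which uniform-ellipticity-free recurrence of the environment process handles — and the uniqueness of the infinite cluster ensures that $v$ and the good sites $x$ lie in the same cluster so that the extra frog can actually travel between them.

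The remaining steps are bookkeeping within the framework of Theorem~\ref{012}: formulate $R_x$ as the event that waking the $\eta(x)$ frogs at $x$ (with all others asleep) results in infinitely many distinct-origin frogs visiting $x$; check that $(\1_{R_x})_x$ inherits the stationary ergodic property from the full system; invoke Theorem~\ref{012} to conclude the dichotomy — either $P[R_x]=0$ for all $x$, giving transience everywhere, or $\1_{R_x}=1$ for a set of $x$ of positive density, in which case the extra-frog argument shows every $v$ with $\eta(v)\ge1$ and $\#\mathcal C(v)=\infty$ hits such an $x$, waking its frogs and triggering infinitely many visits back to $v$, hence recurrence of $v$. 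One also has to note that points outside every infinite cluster, or with $\eta(v)=0$, are transient by definition, so the statement is exactly the claimed dichotomy. I expect the main obstacle to be the second paragraph: constructing the environment-viewed-from-the-extra-frog argument cleanly enough that $E[q(0)]<\infty$ (rather than uniform ellipticity) suffices, and carefully removing the extra frog afterwards so that its presence does not spuriously create the recurrence it was used to detect — this is the delicate point the authors flag, and it is where Theorem~\ref{012}'s abstract hypotheses must be matched to the concrete conductance dynamics.
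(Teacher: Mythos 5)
Your proposal is correct and follows essentially the same route as the paper: the authors also reduce Theorem~\ref{rwrc} to the general zero-one law by checking its hypotheses, with the decisive step being exactly the one you identify — the $q$-biased measure $Q[B]\propto E[q(0);\{\varkappa\in B\}\cap\{\#\mathcal{C}(0)=\infty\}]$, well-defined by $E[q(0)]<\infty$ and equivalent to $P[\,\cdot\mid\#\mathcal{C}(0)=\infty]$ because $q(0)>0$ on the infinite cluster, which serves as the invariant measure for the (augmented) environment viewed from the extra frog and yields condition (ID). The only organizational difference is that the paper factors the verification through an intermediate RWRE theorem (Theorem~\ref{rm01}), which is not a substantive departure from your plan.
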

\begin{remark}
  {\em The recurrence question for a frog model is non-trivial only
    when the individual frogs in Theorem \ref{rwrc} are transient. For example, the
    simple random walk on the infinite Bernoulli percolation cluster
    is transient starting with $d\ge 3$ (\cite{GKZ93}). In the case
    when conductances are i.i.d., the individual frogs in 
    Theorem~\ref{rwrc} are transient as soon as the simple random walk
    on the infinite cluster is transient (\cite{PP96}).\hfill $\Box$}
\end{remark}
Among the first results on the frog model was \cite[Theorem 5]{TW99}, which states that the frog process on $\Z^d$ is recurrent  regardless of the dimension $d$ if there is initially one frog per site and the frogs perform independent simple symmetric random walks. Since this fits into the setting of Theorem \ref{rwrc} (with $c(\{x,y\})=1$ and $\eta(x)=1$) it raises the following question:
\begin{problem}{\rm Is the model in Theorem \ref{rwrc} always recurrent, i.e.\ is it always true that $P[R_0]=P[\eta(0)\ge 1,\#{\cal C}(0)=\infty]$?}
\end{problem}
Finally, we obtain a sufficient recurrence condition for a large class of models
on $\mathbb{Z}^d$ with independent frogs. The proof uses
our general zero-one law, Theorem \ref{012}.
\begin{theorem}\label{log}
  Let $d\ge 1$ and $\epsilon\in(0,1/(2d)]$ be fixed.  
Suppose that  the frog
  trajectories $(S_j(x,i))_{j\ge 0}$, $x\in \Z^d, i\ge 1$, satisfy 
\[P[S_{j+1}(x,i)=S_j(x,i)+e\, \mid\, (S_k(x,i))_{0\le k\le j}]\ge \eps\]
for all $j\ge 0$ and unit vectors $e\in\Z^d$.
 Let the numbers
  $\eta(x)$, $x\in V,$ of sleeping frogs be identically distributed
  and assume that the quantities $\eta(x), (S_j(x,i))_{j\ge 0}$,
  $x\in V, i\ge 1,$ are independent.
Then there is a constant $\con{so}$ which
  depends only on $d$ and $\epsilon$ such that if
\begin{equation}\label{dipl}
P[\eta(0)\ge t]\ge \frac{c_{\ref{so}}}{(\log t)^{d}}
\end{equation}
for all large $t$
 then with probability $1$ every $x\in\Z^d$ with $\eta(x)\ge 1$ is recurrent. For $d=1$ the tail condition {\em (\ref{dipl})} can be replaced by the weaker moment assumption $E\left[\log_+\eta(0)\right]=\infty.$
\end{theorem}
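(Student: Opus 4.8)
The plan is to deduce Theorem \ref{log} from the general zero-one law, Theorem \ref{012}, so that it suffices to exhibit \emph{some} site which is recurrent with positive probability; the dichotomy then upgrades this to "every $x$ with $\eta(x)\ge 1$ is recurrent almost surely". Because of the uniform ellipticity-type hypothesis, each frog, once woken, has at least an $\epsilon$-chance at every step of moving along a prescribed unit vector, so with positive probability it follows any finite deterministic path; in particular frogs woken near the origin will, with uniformly positive probability, walk into any fixed finite box. Thus the only real work is a \emph{local} estimate: if we wake the frogs at $0$, we want infinitely many distinct frogs to return to $0$.

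First I would set up the standard ``chain of wakings'' construction on $\Z^d$: choose an increasing sequence of boxes (or a nested sequence of shells) $B_1\subset B_2\subset\cdots$, and on the event that the $\eta$-values on successive shells are large enough, argue that the wave of activated frogs propagates outward forever and, crucially, that a positive fraction of frogs from each shell drifts back to $0$. The tail condition \eqref{dipl}, $P[\eta(0)\ge t]\ge c_{\ref{so}}/(\log t)^d$, is exactly what is needed so that the expected number of frogs in a box of radius $r$ that exceed the threshold $t_r$ needed to guarantee passage through the preceding region is still infinite (roughly, a box of radius $r$ has $\sim r^d$ sites, the cost of crossing scales like $\epsilon^{-r}$ so one needs $\eta \gtrsim \epsilon^{-r}$, i.e.\ $\log\eta \gtrsim r$, and $r^d \cdot P[\eta(0)\ge \epsilon^{-r}] \gtrsim r^d \cdot c/(r^d) = c$). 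Choosing $c_{\ref{so}}$ large makes each of these expected counts at least, say, $1$, and an independence/Borel--Cantelli argument across disjoint shells then yields infinitely many successful crossings a.s., hence $P[R_0]>0$. The $d=1$ improvement comes from the fact that in one dimension crossing an interval of length $r$ with an $\epsilon$-biased walk only requires $\log\eta \gtrsim r$ from a \emph{single} favorable site per scale, so a Borel--Cantelli argument shows that $E[\log_+\eta(0)]=\infty$ already forces infinitely many scales with a sufficiently large $\eta$-value, and uniform ellipticity carries the wave across and back.

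The main obstacle I anticipate is the ``return'' part of the local estimate: it is easy to push the activation wave outward, but one must guarantee that enough of the newly activated frogs actually come back to $0$, and these frogs are transient in general, so only a small (exponentially small in $r$) fraction return. This is precisely why one needs \emph{many} frogs per site at distance $r$ — enough to overcome the $\epsilon^{\Theta(r)}$ return probability — and getting the bookkeeping right (which frogs are counted, ensuring distinctness of their originating sites, and that the relevant events at different scales are independent or can be decoupled) is the delicate point. A clean way to handle this is to demand, at scale $r$, a single site $x_r$ with $\eta(x_r)$ larger than $\epsilon^{-C r}$ for a suitable constant $C$ and then use $\Omega(\eta(x_r)\cdot \epsilon^{Cr})\to\infty$ frogs from that one site returning to $0$; the tail bound \eqref{dipl} with large $c_{\ref{so}}$ guarantees such a site exists in shell $r$ with probability bounded away from $0$ (or summing to $\infty$ over disjoint shells), and Borel--Cantelli finishes it. Once $P[R_0]>0$ is established, Theorem \ref{012} immediately gives the stated conclusion.
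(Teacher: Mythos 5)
Your overall architecture (prove $P[R_0]>0$ for a single site, then invoke Theorem \ref{012}) is the same as the paper's, and your scaling heuristic ($\log\eta\gtrsim r$ needed to cross distance $r$, against $\sim r^d$ sites) correctly explains the exponent in \eqref{dipl}. However, there are two genuine gaps. First, you cannot simply ``deduce'' the result from Theorem \ref{012}: its hypotheses (ERG), (ID), and (T) must be verified, and for the raw trajectories they need not hold. The only assumption here is a conditional lower bound on unit steps, so the laws of the $S_\cdot(x,i)$ may depend on $x$ and $i$ in a completely non-stationary way and $(\eta,S)$ carries no ergodicity at all. The paper resolves this by a coupling device: each step is generated from a uniform variable, $T(x,i)$ is the first time that variable leaves $[0,2d\epsilon)$, and the stopped walks $\bar S$ are then i.i.d.\ simple symmetric random walks killed at independent geometric times, for which (ERG) and (ID) can be checked; condition (T) is supplied by proving $P[\bar R_0]>0$ for the \emph{stopped} model. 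This construction is essential and is entirely absent from your proposal.

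Second, your propagation step does not produce an infinite activation wave. You compute that the expected number of ``heavy'' sites per scale (those with $\eta\ge\epsilon^{-Cr}$) is a constant; but then the probability that a given scale contains \emph{no} such site is bounded away from $0$, the product of per-scale success probabilities vanishes, and the chain of wakings a.s.\ terminates. Borel--Cantelli yielding ``infinitely many successful crossings'' is not the same as an unbroken chain: a crossable scale is useless if the wave never reaches it. The paper instead proves $P[W^0(\eta,\bar S)=\Z^d]>0$ by bounding the \emph{conditional} probability $a_r$ that some $y\in\partial B(0,r)$ is missed given that all of $B(0,r-1)$ is already awake, pooling the contributions of \emph{all} active sites: a site at distance $k$ from $y$ fails to send a frog to $y$ with probability at most $1-P[\eta(0)>\epsilon^{-k}]/2\le 1-c\,c_{\ref{so}}k^{-d}$ and there are $\gtrsim k^{d-1}$ such sites, so the total failure probability per target is about $r^{-c'c_{\ref{so}}}$; this beats the $r^{d-1}$ entropy and makes $\sum_r a_r<\infty$ once $c_{\ref{so}}$ is large (Raabe's test) --- which is exactly where the constant enters. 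Your return argument (one heavy site at infinitely many scales, each sending back at least one frog) is workable in principle, though the paper simply uses all frogs via $E[(\log_+\eta(0))^d]=\infty$ and Lemma \ref{bau}; but it is contingent on the covering step you have not actually supplied.
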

 In Proposition~\ref{sharp} below we show that in the
  case when  the frogs have a drift away from the origin the condition
\begin{equation}\label{ii}
E\left[(\log_+\eta(0))^d\right]=\infty
\end{equation}
is necessary for recurrence. Since \eqref{dipl} is only slightly stronger than \eqref{ii},
the condition \eqref{dipl} is close to being sharp, see also
Problem~\ref{ns}.
\begin{remark}
  {\em In the case when $S(x,i)$ is a nearest neighbor random walk
    with a non-zero drift in the first coordinate direction,
    recurrence results were previously obtained in \cite{GS09} ($d=1$)
    and \cite{DP14} ($d\ge 1$). Theorem~2.1 of \cite{DP14} states that
    if
  \begin{equation}
    \label{dp}
      E\left[(\log_+\eta(0))^{\frac{d+1}{2}}\right]=\infty
  \end{equation}
  then the frog model is recurrent. It was proved
  previously in \cite[Theorem 2.2]{GS09} that for $d=1$ the condition
  (\ref{dp}) is necessary and sufficient for recurrence. 
    Theorem~\ref{log} shows that the frog model can be recurrent even
    if the expectation in \eqref{dp} is finite, demonstrating that
    (\ref{dp}) is not necessary for recurrence when $d\ge 2$.
  \hfill\qed }
\end{remark}

\textbf{Organization of the paper.} In Section~\ref{1} we introduce
the general frog model and state our main result
Theorem~\ref{012}. The proof of this theorem is given in
Section~\ref{2}. Section~\ref{3} discusses a number of applications of
Theorem~\ref{012}, in particular, it contains the proofs of
Theorems~\ref{01} and \ref{rwrc}. Theorem~\ref{log} and its partial
converse are proven in Section~\ref{4}. Appendix \ref{A} discusses the
above mentioned 
conjecture from \cite{GS09}. Technical results are collected in
Appendices B and C.

\section{The frog model and the main result}\label{1}
Let $V$ be a countably infinite set.
A frog configuration $(n,s)$ on $V$ consists of 
\begin{itemize}
\item $n=(n(x))_{x\in V}$, where $n(x)\in\N_0:=\N\cup\{0\}$ represents the number
  of frogs at site $x$ indexed by $i=1,2,\dots,n(x)$ which can be
  woken up and will be called {\em initial frogs};
\item a collection of paths $s=(s_j(x,i))_{j\ge 0,x\in V,i\ge 1}$,
  where $s_j(x,i)\in V$ denotes the position of the $i$-th frog
  originating at $x$ after $j$ steps. We assume that $s_0(x,i)=x$.
\end{itemize}
The set of all frog configurations is denoted by $\mathbb{F}\subseteq \N_0^V\times V^{\N_0\times V\times\N}$.
Next we define recurrence of a site $v\in V$ for a given
configuration $(n,s)\in \mathbb{F}$. Set $W_0^v(n,s):=\{v\}$ and define recursively for $j\ge 1$,
\begin{align*}
  W_j^v(n,s):=\bigg\{x\in V\setminus \bigcup_{k=0}^{j-1}W_k^v(n,s)\,\bigg|\,&\exists k<j,\  y\in W_k^v(n,s),\ 1\le i\le n(y):\\
&
s_{j-k}(y,i)=x\bigg\}
\end{align*}
and let $W^v(n,s):=\bigcup_{j\ge 0}W_j^v(n,s).$
Note that if at time $0$ we wake up the  initial frogs at site $v$
then $W_j^v(n,s)$ is the set of sites visited at time $j$ for the
first time by an active frog and $W^v(n,s)$ is the set of sites ever
visited by an active frog provided that $n(v)\ge 1$.
\begin{defi}\label{rt}
  A site $v\in V$ is said to be \textbf{recurrent} for a frog configuration $(n,s)\in \mathbb{F}$ if
  there are infinitely many distinct $x\in W^v(n,s)$ for which there are $i\in\{1,\ldots,n(x)\}$ and $j\ge 0$ such that $s_j(x,i)=v$.
Otherwise $v$ is called \textbf{transient} for $(n,s)$.
\end{defi}
Note that if $n(v)=0$ 
then $v$ by the definition is transient for $(n,s)$.
\begin{remark}\label{rtdef}{\rm There are several possible ways to define
    recurrence of a site $v$ w.r.t.\ a frog configuration.  Our definition  appears to be the most restrictive, since we take into
    account only visits by active frogs originating at distinct
    sites. It might seem more natural to count the number of times
    when $v$ is occupied by any active frog.  Our choice is based on two reasons: (i) in essentially
    all cases of interest our definition is equivalent to less
    restrictive ones (see, for example, the discussion in
    Appendix~\ref{A}); (ii) it allows us to consider stopped frog
    trajectories (see (\ref{sf})) on the same state space without
    deactivating or removing the ``stopped frogs'' from the system.\hfill $\Box$}
\end{remark}
We consider random frog configurations. Equip $\mathbb F$ with its
standard $\si$-field. Let $(\Om,\F,P)$ be a probability space
and $(\eta,S)$ be an $\mathbb F$-valued random variable. We write
$\eta=(\eta(x))_{x\in V}$ and
$S=(S_j(x,i))_{j\ge 0,x\in V,i\ge 1}$, where $\eta(x)$ takes values
in $\N_0$ and $S_j(x,i)$ in $V$ with $S_0(x,i)=x$.  For our main result Theorem
\ref{012} we shall need several auxiliary random variables\footnote{In many applications some of these variables are trivial, i.e.\ $\cal{C}(v)=V$,
    $T(v,i)=\infty$, or $X_v=v$ a.s.\ for all $v\in V$, $i\ge 1$.
    But see Theorem~\ref{rwrc} and the proofs of Theorem~\ref{log} and
    Theorem~\ref{01q} where at least one set of these extra variables
    is non-trivially defined and plays an important role.} on
$(\Om,\F,P)$, namely,
\begin{itemize}
\item $V$-valued random variables $S_j(x,0)$, $j\ge 0, x\in V$,
  denoting the position at time $j$ of the so-called {\em extra frog}
  starting at site $S_0(x,0)=x$. We denote by
  $S^0:= (S_j(x,i))_{j\ge 0,x\in V,i\ge 0}$ the collection of frog
  trajectories including those of the extra frogs. The random variable
  $(\eta, S^0)$ takes values in 
  $\mathbb F^0\subseteq \N_0^V\times V^{\N_0\times V\times\N_0}$.
\item 
a family $\kappa=(\kappa(x,y))_{x,y\in V}$ of $[0,1]$-valued random variables,
  called {\em ellipticity variables}, which will provide bounds on
  transition probabilities (see assumption (EL) below).  We say that
  $y\in V$ {\em can be reached from} $x\in V$ if there exist an $m\ge 0$ and a
  sequence $(x=x_0,x_1,\ldots,x_{m}=y)\in V^{m+1}$ such that
  $\varkappa(x_{i-1},x_i)>0$ for all $i=1,\ldots,m$. We call
  $x,y\in V$ {\em equivalent w.r.t.}\ $\varkappa$ if $y$ can be reached
  from $x$ and $x$ from $y$.  The equivalence class of $v\in V$
  w.r.t.\ $\varkappa$ is denoted by $\cal{C}(v)$ and is called the {\em cluster}
  of $v$.
\item 
a family $T=(T(x,i))_{x\in V,i\ge 1}$ of $\N_0\cup\{\infty\}$-valued random variables. We think of $T(x,i)$ as the time at which the $i$-th
  frog originating at $x$ is stopped. However, we do not require $T(x,i)$ to be a stopping time.
\item a family $X=(X_v)_{v\in V}$ of $V$-valued random variables representing the
  choice of the extra frog which will be used to examine the
  recurrence/transience of the site $v$. 
\end{itemize} 
For $i,j\ge 0$ and $v\in V$ define the $\si$-field
  \begin{align*} {\cal F}_j(v,i):=\sigma\Bigg(&\kappa, T, \eta, S_m(x,k);\\ &(m,(x,k))\in \bigg(\N_0\times
    \left((V\times \N)\backslash\{(v,i)\}\right)\bigg) \cup
    \bigg(\{0,\ldots,j\}\times\{(v,i)\}\bigg)\Bigg).
\end{align*}
If $i\ge 1$ then ${\cal F}_j(v,i)$ contains all information about
$\varkappa$, $T$, and $\eta$, as well as the
information about the non-extra frogs with the exception of the steps
past $j$ of the $i$-th frog at $v$. Note that if $i=0$ then in
addition to all information about $\varkappa$, $T$, 
$\eta$, and all non-extra frogs  the $\sigma$-field
${\cal F}_j(v,i)$ contains the information about the first $j$ steps of
the extra frog at $v$.

 Next we state several conditions on the distribution of $(\kappa, T, X, \eta, S^0)$ under $P$.
Our first assumption is a mild ellipticity condition. It relates the  non-extra
frog trajectories and the ellipticity variables.
\[
\mbox{\begin{tabular}{l}
For all $v,y\in V,\ i\ge 1,$ and $j\ge 0$, $P$-a.s.\\
${\displaystyle \kappa(S_j(v,i),y)\le P\left[S_{j+1}(v,i)=y\,|\,{\cal F}_j(v,i)\right]\le \1_{\kappa(S_j(v,i),y)>0}.}$
\end{tabular}}
\tag{EL}\]

\begin{lemma} \label{ch}  For $v\in
  V$ let $R_v$ be the event that $v$ is recurrent for
  $(\eta,S)$.
 If {\rm (EL)} holds then $P[R_v\backslash\{\eta(v)\ge 1,\ \#\cal{C}(v)=\infty\}]=0$.
\end{lemma}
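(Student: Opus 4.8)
The plan is to show the complement directly: if $v$ is recurrent for $(\eta,S)$, then necessarily $\eta(v)\ge 1$ and $\#\mathcal{C}(v)=\infty$, up to a $P$-null set. The first part is immediate from Definition~\ref{rt}: if $\eta(v)=0$, then $v$ is transient by definition (as noted right after the definition), so $R_v\subseteq\{\eta(v)\ge 1\}$ deterministically, with no recourse to (EL). So the real content is that recurrence forces $\#\mathcal{C}(v)=\infty$, and this is where (EL) enters.

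First I would observe that on $R_v$ there must be infinitely many distinct sites $x\in W^v(\eta,S)$ each of which sends some frog to $v$; in particular, $W^v(\eta,S)$ is infinite. Every site in $W^v(\eta,S)$ is visited by some active frog whose trajectory ultimately traces back, step by step, to a frog woken at $v$. So it suffices to prove: $P$-a.s., every site visited by any active frog originating (through the chain of wake-ups) at $v$ lies in $\mathcal{C}(v)$; if that holds on an event of full probability, then $R_v$ (which forces infinitely many such sites) is contained, up to null sets, in $\{\#\mathcal{C}(v)=\infty\}$. The key step is the following claim: for all $v,i,j$, $P$-a.s.\ on $R_v$ (or more cleanly, $P$-a.s.\ without qualification) one has $S_j(v,i)\in\mathcal{C}(v)$ whenever frog $(v,i)$ is ever activated — more precisely, I would prove the cleaner unconditional statement that $P$-a.s., for every $y\in V$ and every non-extra frog $(y,k)$, if $S_j(y,k)$ is ever visited as an active frog in the wake-up process started at $v$, then $y$ and all of $S_0(y,k),\dots,S_j(y,k)$ are equivalent to $v$ w.r.t.\ $\varkappa$.

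The engine for this is (EL) combined with a Borel--Cantelli / conditional-probability argument. By the upper bound in (EL), $P[S_{j+1}(v,i)=y\mid\mathcal{F}_j(v,i)]\le\mathbbm{1}_{\kappa(S_j(v,i),y)>0}$, so $P$-a.s.\ the event $\{S_{j+1}(v,i)=y,\ \kappa(S_j(v,i),y)=0\}$ has conditional probability zero; summing over the countably many $(v,i,j,y)$ shows that $P$-a.s., every actual one-step transition $S_j(v,i)\to S_{j+1}(v,i)$ of a non-extra frog satisfies $\kappa(S_j(v,i),S_{j+1}(v,i))>0$, i.e.\ $S_{j+1}(v,i)$ can be reached from $S_j(v,i)$. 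Hence $S_j(v,i)\in\mathcal{C}(S_0(v,i))=\mathcal{C}(v)$ for all $j$, on a full-probability event, for every $i$ — but wait, this only gives reachability in one direction, placing $S_j(v,i)$ in the forward cone of $v$, not in the equivalence class. To upgrade to ``equivalent'', I would invoke the structure of the wake-up process: a site $x\in W^v(\eta,S)$ contributes to recurrence precisely because some frog at $x$ returns to $v$, i.e.\ $v$ can be reached (via a frog path, hence via $\kappa$-positive steps by the same a.s.\ argument applied to frog $(x,i)$) from $x$; and $x$ is reached from $v$ along frog paths, so $x$ is reachable from $v$; together these give $x\in\mathcal{C}(v)$. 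Thus each of the infinitely many distinct recurrence-contributing sites lies in $\mathcal{C}(v)$, forcing $\#\mathcal{C}(v)=\infty$ on $R_v$ up to a null set.

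I expect the main obstacle to be bookkeeping: carefully propagating the ``reached from'' relation along the recursive structure of $W_j^v$, and checking that the a.s.\ statement ``every realized frog step is $\kappa$-positive'' can be applied simultaneously to all the relevant frogs (those at $v$ and those at the sites they eventually wake) — this is fine because there are only countably many frogs and countably many time indices, so a single countable union of null sets suffices, but one must be slightly careful that the conditioning $\sigma$-fields $\mathcal{F}_j(v,i)$ are the right ones to license the Borel--Cantelli step (they are, since (EL) is stated exactly w.r.t.\ these). A secondary subtlety is that (EL) is stated only for non-extra frogs ($i\ge 1$), which is exactly what we need here since recurrence of $v$ for $(\eta,S)$ involves only the non-extra frogs; the extra frogs play no role in this lemma.
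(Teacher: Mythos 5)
Your proof is correct and follows essentially the same route as the paper's: the deterministic inclusion $R_v\subseteq\{\eta(v)\ge 1\}$, the a.s.\ $\kappa$-positivity of every realized frog step (via the upper bound in (EL) and a countable union of null sets) giving reachability of all of $W^v(\eta,S)$ from $v$, and the bidirectional reachability of the infinitely many recurrence-contributing sites forcing $\#\mathcal{C}(v)=\infty$. The paper compresses this into three sentences, but the mechanism — including your self-correction that forward reachability alone is insufficient — is exactly what the authors use.
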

\begin{proof}
  As already noted after Definition \ref{rt},
  $R_v\subseteq \{\eta(v)\ge 1\}$. Moreover, by the upper bound in
  (EL), a.s.\ every point in $W^v(\eta,S)$ can be reached from $v$ and
  if $R_v$ occurs then, by Definition \ref{rt}, $v$ can be reached
  from infinitely many points of $W^v(\eta,S)$. Hence $R_v$ is a.s.\
  contained in $\{\#\mathcal C(v)=\infty\}$.
\end{proof}
\begin{defi}
  We say that the frog process satisfies the \textbf{zero-one law for
    recurrence and transience} if $P[R_v]=0$ for all $v\in V$ or
  $P[R_v]=P[\eta(v)\ge 1,\ \#\cal{C}(v)=\infty]$ for all $v\in V$.
\end{defi}
\begin{remark}\label{r6}{\rm By Lemma \ref{ch} the zero-one law holds
    iff either for all $v\in V$, $P[R_v]=0$ or for all $v\in V$ for
    which $P[\eta(v)\ge 1,\ \#\cal{C}(v)=\infty]>0$ we have
    $P[R_v\mid \eta(v)\ge 1,\ \#\cal{C}(v)=\infty]=1$. This justifies
    the name zero-one law. \hfill $\Box$}
\end{remark}
Our next assumption is the uniqueness of the infinite cluster. Together with (EL) it can be interpreted as an irreducibility assumption.
\[\tag{UC} \mbox{There is $P$-a.s.\ at most one infinite equivalence class w.r.t.\ $\varkappa$. }
\]
 Stopping the frog trajectories $S_\cdot(x,i)$ at the respective time $T(x,i)$ we obtain
\begin{equation}\label{sf}
  \bar S:=(\bar S_j(x,i))_{j\ge 0,x\in V,i\ge 1}:=(S_{j\wedge T(x,i)}(x,i))_{j\ge 0,x\in V,i\ge 1}.
\end{equation}
Denote by $\bar R_v$ the event that $v\in V$ is recurrent for $(\eta,\bar S)$.
Obviously, $\bar R_v\subseteq R_v$. The next assumption has to do with  a partial converse.
\[ \mbox{For all $v\in V$, $P[R_v]>0$ implies $P[\bar R_v]>0$.}
\tag{T}
\]
Note that (T) is void if $T\equiv\infty$.

To introduce our somewhat non-standard ergodicity assumption define
for each $\phi\in{\rm Sym}(V)$, i.e.\  permutation $\vp$ of $V$, the function
$\theta_\vp: \mathbb F\to\mathbb F$ by
\begin{equation}\label{perm}
  \theta_\vp(n,s):=\left(\left(n(\vp(x))\right)_{x\in V},\left(\vp^{-1}(s_j(\vp(x),i))\right)_{j\ge 0,x\in V,i\ge 1}\right)\in\mathbb F.
\end{equation}
Denote by $\cal I$ the $\si$-field of all $A\in\cal F$ for which there
is a measurable set $B$ such that 
$A\overset{P}{=}\left\{\theta_\vp(\eta, \bar S)\in B\right\}$ for all
$\vp\in{\rm Sym}(V)$, where $A\overset P= C$ means that $P[A\, \triangle\, C]=0$.
Such $A$ is called 
almost
invariant (w.r.t.\
${\rm Sym}(V)$).  Then the ergodicity assumption\footnote{Note that we
  do not require the distribution of $(\eta,\bar S)$ to be invariant w.r.t.\ $\theta_\phi$ for
  $\phi\in {\rm Sym}(V)$.} reads as follows: 
\[P[A]\in\{0,1\}\quad\mbox{for all $A\in\cal I$.}
\tag{ERG}\]
So far the distribution of the extra frogs has not played any role yet. 
The following assumption relates the extra frog and the first frog at $v\in V$.
\[\tag{EX}
\begin{tabular}{l}
\text{For all $v\in V$ the paths $(S_j(v,0))_{j\ge 0}$ and $(S_j(v,1))_{j\ge 0}$}\\
\text{are i.i.d.\
given $\cal F_0(v,1)$.}
\end{tabular}
\]
\begin{remark}\label{hai}{\rm Note that (EL), (UC), (T), and (ERG) are conditions on the distribution of $(\kappa, T, \eta, S).$
We claim that 
one can construct a probability space with random variables $\kappa, T, \eta$ and $S^0$ 
on which $(\kappa, T, \eta, S)$ has the same distribution as in the original model and, in addition, 
(EX) holds. In this sense (EX) can be assumed without loss of
generality. We shall prove this claim in Appendix \ref{ef}.\hfill
$\Box$ }
\end{remark}
The next assumption is obviously satisfied in the most common case
$X_v\equiv v$.
\[\tag{Xv}
\text{For all $v\in V$, $X_v$ and $(\kappa, T,\eta,S^0)$ are independent and $P[X_v=v]>0$.}
\]
Now we state the final  and crucial assumption.  Let
$o:V\to V$ be such that $o\circ o=o$. We call $o(x)$ the
representative of $x\in V$.\footnote{For a frog model on a transitive
  graph, $o(x)\equiv {\rm const}$ representing a reference point such
  as $0$ for $\Z^d$ or just an arbitrary fixed vertex. In general,
  when $V$ naturally splits into orbits under the action of some
  subgroup of ${\rm Sym}(V)$, $o(x)$ designates a reference point for
  each orbit. See Theorem~\ref{01q} and Example~\ref{co1}.}  For each
$x\in V$ fix some $\vp_x\in{\rm Sym}(V)$ such that $\vp_x(o(x))=x$.
We call
\begin{equation}\label{evp}
Z(x):=\left(\left(\1_{\bar R_{\vp_x(y)}}\right)_{y\in V},\left(\kappa\left(\vp_x(y),\vp_x(z)\right)\right)_{y,z\in V},o(x)\right)
\end{equation}
the {\em environment viewed from the vertex} $x\in V$.
We assume the existence of equivalent measures under which the environment viewed from the extra frogs always looks the same\footnote{Note that (ID) is weaker than the more common assumption
  that the environment viewed from the particle (here the extra frog)
  is stationary. See the proof of Theorem \ref{log} where we have an
  example where the latter assumption is not fulfilled but (ID) is.
  Counterexamples \ref{co2} and \ref{res} show that in Theorem
  \ref{012} the assumption (ID) cannot be replaced by the assumption
  that the environment is stationary w.r.t.\ to the canonical spatial
  shifts of the state space $V$.}.
\[\mbox{\begin{tabular}{l} For each $v\in V$ with $P[\#\cal{C}(X_v)=\infty]>0$ there is a\\ probability 
  measure $P_v$ which is  equivalent 
  to (i.e.\ mutually\\ absolutely continuous with) $P[\,\cdot\mid \#\cal{C}(X_v)=\infty]$ such that\\
      $Z\left(S_j(X_v,0)\right), j\ge 0,$ is identically distributed under $P_v$.
\end{tabular}}
\tag{ID}\]
Our main result is the following. 
\begin{theorem}[\rm \bf Zero-one law]\label{012}  
 Assume {\em  (EL), (UC), (T), (ERG), (EX), (Xv),} and {\em (ID)}.  
Then the zero-one law for recurrence and transience holds.
\end{theorem}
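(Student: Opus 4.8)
The plan is to show the dichotomy by assuming that recurrence is not a.s.\ trivial and deducing that it holds with full probability. So suppose $P[R_v]>0$ for some $v\in V$. By assumption (T) we then also have $P[\bar R_v]>0$, i.e.\ the stopped configuration $(\eta,\bar S)$ makes $v$ recurrent with positive probability. First I would record the key monotonicity/locality facts: $\bar R_v\subseteq R_v\subseteq\{\eta(v)\ge1,\ \#\mathcal C(v)=\infty\}$ by Lemma~\ref{ch}, and — crucially — if a woken frog from $v$ ever visits a site $x$ with $\bar R_{x}$ occurring (and $\eta(x)\ge1$), then activating $x$ triggers infinitely many frogs through $x$ and hence back through $v$, so that $R_v$ occurs. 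Thus it suffices to prove that, conditionally on $\{\eta(v)\ge1,\ \#\mathcal C(v)=\infty\}$, an activated frog from $v$ a.s.\ hits some site in the (random) set $G:=\{x\in V:\bar R_x\text{ occurs}\}$, which by (ERG) we will show is a.s.\ infinite whenever it is nonempty with positive probability.

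The mechanism for ``hitting $G$'' is the extra frog. Using (EX), the extra frog $(S_j(v,0))_j$ has, conditionally on $\mathcal F_0(v,1)$, the same law as the real first frog $(S_j(v,1))_j$, so it suffices to show the extra frog started appropriately lands in $G$ a.s.; and by (Xv) we may start it at $X_v$, which equals $v$ with positive probability and is independent of everything else, so conclusions drawn for $X_v$ transfer to $v$. The point of passing to the extra frog is that the event $\bar R_x$ may depend on the trajectories of \emph{all} frogs, so $G$ is not measurable with respect to anything the real first frog is independent of; but the extra frog does not appear in $(\eta,\bar S)$ at all, hence does not disturb $G$. Now condition on $\{\#\mathcal C(X_v)=\infty\}$ and invoke (ID): under the equivalent measure $P_v$ the environment viewed from the extra frog, $Z(S_j(X_v,0))$, is identically distributed in $j$. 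Since $\bar R_x$ is one of the coordinates of $Z(x)$ (via $\I_{\bar R_{\vp_x(y)}}$ at $y=o(x)$, suitably unwound using $\vp_x(o(x))=x$), the probability that the extra frog sits in $G$ at time $j$ is the same for every $j$. If this common probability is positive, a Borel–Cantelli / second-moment argument along the trajectory — or more robustly, the observation that the extra frog's cluster is a.s.\ the unique infinite one by (UC) and the frog is transient within it — forces the extra frog to actually enter $G$ a.s.\ under $P_v$, hence under $P[\,\cdot\mid\#\mathcal C(X_v)=\infty]$ by equivalence, hence (using (Xv)) gives $P[R_v\mid \eta(v)\ge1,\#\mathcal C(v)=\infty]=1$.

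It remains to handle the dichotomy itself and the case $P[G\neq\emptyset]=0$. Here (ERG) enters: the event $\{G\neq\emptyset\}=\bigcup_x\bar R_x$ should be shown to lie in the almost-invariant $\sigma$-field $\mathcal I$ — the family $(\I_{\bar R_x})_{x\in V}$ is covariant under the relabelling $\theta_\vp$ of $(\eta,\bar S)$ because $\bar R_x$ is defined purely in terms of $(\eta,\bar S)$ and the combinatorics of who wakes whom is permutation-equivariant — so $P[G\neq\emptyset]\in\{0,1\}$. If it is $0$, then every $\bar R_x$ is null, so by (T) every $R_v$ is null and we are in the first alternative. If it is $1$, the argument of the previous paragraph applies and we are in the second alternative, $P[R_v]=P[\eta(v)\ge1,\#\mathcal C(v)=\infty]$ for all $v$; one checks uniformity over $v$ by noting the whole setup is phrased through representatives $o(v)$ and the fixed permutations $\vp_v$, so the conclusion for a single representative propagates.

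The main obstacle I expect is the measurability and equivariance bookkeeping needed to place $\{G\neq\emptyset\}$ (and more generally the relevant events) in $\mathcal I$, together with making the ``extra frog hits $G$ a.s.'' step rigorous: (ID) only gives identical one-dimensional distributions of $Z(S_j(X_v,0))$, not stationarity of the whole sequence, so one cannot directly run an ergodic-theoretic recurrence argument and must instead extract ``hits $G$'' from the single fact that $P_v[\bar R_{S_j(X_v,0)}]$ is constant in $j$ and positive — presumably by summing, using that the extra frog's range in the infinite cluster is infinite, and ruling out that it forever avoids $G$. The interplay of the three changes of measure — conditioning on $\#\mathcal C(X_v)=\infty$, passing to $P_v$, and the independence/positivity in (Xv) — is where the proof has to be most careful.
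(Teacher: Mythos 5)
Your overall architecture matches the paper's: use (T) and (ERG) to get $P[\bigcup_x \bar R_x]=1$ in the nontrivial case, use the extra frog together with (ID) to show it reaches the random set $G$ of sites recurrent for $(\eta,\bar S)$, and then transfer back to the real frogs via (EX). However, the two steps you yourself flag as the main obstacles are exactly where the argument is incomplete, and the mechanisms you sketch for them do not work. First, the hitting step: from (ID) you only get that $P_v[S_j(X_v,0)\in G]$ is constant in $j$, and a constant (even positive) sequence of probabilities $P[A_j]=p$ never forces $P[\bigcup_j A_j]=1$ (take all $A_j$ equal); your fallback that ``the frog is transient within the infinite cluster'' is not an assumption of the theorem and may fail. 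The paper's resolution is to replace the indicator $\I_{\bar R_x}$ by the quantitative function $D(x)=\sup_m \frac{\I_{\bar R_{x_m}}}{m+1}\prod_{i}\kappa(x_{i-1},x_i)$, which is a function of $Z(x)$ and satisfies $P[D(x)>0\mid \#\mathcal C(x)=\infty]=1$ (probability \emph{one}, via (ERG), (UC), (T) and Lemma \ref{nash}); (ID) then gives $\limsup_j D(S_j(X_v,0))>0$ a.s., and a conditional Borel--Cantelli argument along stopping times, powered by the ellipticity lower bound for the extra frog (Lemma \ref{ex2}, itself a nontrivial consequence of (EX) and (EL)), upgrades this to the statement that $D(S_j(v,0))=1$ for some $j$, i.e.\ the extra frog actually lands on a recurrent site.

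Second, the removal of the extra frog. Your reduction ``by (EX) the extra frog and the first frog have the same conditional law given $\mathcal F_0(v,1)$, so it suffices that the extra frog hits $G$'' is invalid as stated: $G$ is built from $(\eta,\bar S)$ and therefore depends on the trajectory of the first frog at $v$, so it is not $\mathcal F_0(v,1)$-measurable, and equality of conditional laws of the two trajectories does not transfer the event of hitting $G$ from one frog to the other. The paper instead shows that adding the extra frog at $v$ makes $v$ recurrent a.s.\ on $\{\#\mathcal C(v)=\infty\}$, uses the combinatorial inclusion $W_j(a_v(n,s^0))\subseteq W_j(n,s)\cup W_j(t_v(n,s^0))$, and then exploits the conditional i.i.d.\ structure to write the probability that $v$ is transient for both configurations as $E\left[P[R_v^c\mid\mathcal F_0(v,1)]^2\right]$, which is squeezed between $P[\eta(v)=0$ or $\#\mathcal C(v)<\infty]$ on both sides; this forces $P[R_v^c\mid\mathcal F_0(v,1)]$ to equal the indicator of $\{\eta(v)=0$ or $\#\mathcal C(v)<\infty\}$ a.s. Both missing pieces are substantive, so the proposal as written does not constitute a proof.
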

\section{Proof of Theorem \ref{012}}\label{2}
\begin{lemma}\label{nash}
Let $\vp\in{\rm Sym}(V)$, $v\in V$, and $(n,s)\in\mathbb F$. Then $\vp(v)$ is recurrent for $(n,s)$ iff $v$ is recurrent for $\theta_\vp(n,s)$.
\end{lemma}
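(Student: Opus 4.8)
Lemma \ref{nash} is a symmetry/relabelling lemma, and the plan is to prove it by carefully unwinding the definitions of $\theta_\vp$ and of the sets $W_j^v$. The key idea is that applying $\theta_\vp$ to a configuration $(n,s)$ amounts to relabelling every site $x$ by $\vp^{-1}(x)$, so that whatever the frog dynamics do around $v$ in $\theta_\vp(n,s)$ is exactly what they do around $\vp(v)$ in $(n,s)$, read through $\vp$. Concretely, writing $(n',s') := \theta_\vp(n,s)$ so that $n'(x) = n(\vp(x))$ and $s'_j(x,i) = \vp^{-1}(s_j(\vp(x),i))$, the plan is to first establish the identity
\[
W_j^{v}(n',s') \;=\; \vp^{-1}\!\left(W_j^{\vp(v)}(n,s)\right) \qquad \text{for all } j\ge 0,
\]
and then deduce the recurrence equivalence from it together with Definition \ref{rt}.

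First I would prove the displayed identity by induction on $j$. The base case $j=0$ is immediate since $W_0^{v}(n',s') = \{v\} = \vp^{-1}(\{\vp(v)\}) = \vp^{-1}(W_0^{\vp(v)}(n,s))$. For the inductive step, assume $W_k^{v}(n',s') = \vp^{-1}(W_k^{\vp(v)}(n,s))$ for all $k < j$. Then a site $x$ lies in $W_j^{v}(n',s')$ iff $x \notin \bigcup_{k<j} W_k^{v}(n',s')$ and there exist $k<j$, $y \in W_k^{v}(n',s')$, and $1 \le i \le n'(y)$ with $s'_{j-k}(y,i) = x$. Now translate each clause: $x \notin \bigcup_{k<j}W_k^{v}(n',s')$ becomes $\vp(x) \notin \bigcup_{k<j}W_k^{\vp(v)}(n,s)$ by the inductive hypothesis; $y \in W_k^{v}(n',s')$ becomes $\vp(y) \in W_k^{\vp(v)}(n,s)$; the bound $i \le n'(y) = n(\vp(y))$ is exactly the right index range for the frogs at $\vp(y)$; and $s'_{j-k}(y,i) = \vp^{-1}(s_{j-k}(\vp(y),i)) = x$ is equivalent to $s_{j-k}(\vp(y),i) = \vp(x)$. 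Hence $x \in W_j^{v}(n',s')$ iff $\vp(x) \in W_j^{\vp(v)}(n,s)$, which is the claim for $j$. Taking the union over $j$ gives $W^{v}(n',s') = \vp^{-1}(W^{\vp(v)}(n,s))$.

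Finally, I would translate Definition \ref{rt} through this bijection. The site $v$ is recurrent for $(n',s')$ iff there are infinitely many distinct $x \in W^{v}(n',s')$ admitting $i \in \{1,\dots,n'(x)\}$ and $j\ge 0$ with $s'_j(x,i) = v$. Writing $x' := \vp(x)$, the condition $x \in W^{v}(n',s')$ becomes $x' \in W^{\vp(v)}(n,s)$; $i \le n'(x) = n(x')$ is the correct index range at $x'$; and $s'_j(x,i) = \vp^{-1}(s_j(\vp(x),i)) = v$ is equivalent to $s_j(x',i) = \vp(v)$. Since $x \mapsto \vp(x)$ is a bijection of $V$, the family of such $x$ is infinite iff the family of corresponding $x'$ is infinite, i.e.\ iff $\vp(v)$ is recurrent for $(n,s)$. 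This completes the proof.

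I do not anticipate a genuine obstacle here; the only thing requiring care is keeping the direction of $\vp$ versus $\vp^{-1}$ straight throughout (the site labels are permuted by $\vp$ in the first coordinate of $\theta_\vp$ but by $\vp^{-1}$ in the path coordinate), and making sure the index range $1 \le i \le n(y)$ is handled consistently since $n'(y) = n(\vp(y))$ rather than $n(y)$. Bookkeeping the induction cleanly is the whole content.
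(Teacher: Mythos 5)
Your proof is correct and follows essentially the same route as the paper: an induction on $j$ showing $W_j^{v}(\theta_\vp(n,s))=\vp^{-1}\bigl(W_j^{\vp(v)}(n,s)\bigr)$, followed by translating the recurrence criterion of Definition \ref{rt} through the bijection $\vp$. The paper merely packages the final step with an auxiliary indicator $I_y^x$, but the content is identical.
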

\begin{proof}
For $x,y\in V$ and $(\tilde n,\tilde s)\in\mathbb F$ let $I_y^x(\tilde n,\tilde s):=1$ if there exist $j\ge 0$ and $1\le i\le \tilde n(y)$ such that $\tilde s_j(y,i)=x$. Otherwise, set $I_y^x(\tilde n,\tilde s):=0$.
Then by Definition \ref{rt},
\begin{equation}\label{zero}
\mbox{$x$ is recurrent for $(\tilde n,\tilde s)$\quad iff}
\sum_{y\in W^{x}(\tilde n,\tilde s)}I_y^{x}(\tilde n,\tilde s)=\infty.
\end{equation}
By induction over $j$ we see that 
$W_j^{\vp(v)}(n,s)=\vp\left(W_j^{v}\left(\theta_\vp(n,s)\right)\right)$  for all $j\ge 0$ and, consequently,
$W^{\vp(v)}(n,s)=\vp\left(W^{v}\left(\theta_\vp(n,s)\right)\right)$.
Moreover, 
$I_y^{\vp(v)}(n,s)=I_{\vp^{-1}(y)}^{v}(\theta_\vp(n,s)).$
Thus, by (\ref{zero}) the statement that $\vp(v)$ is recurrent for
$(n,s)$ is equivalent to
\[\sum_{\vp^{-1}(y)\in W^{v}(\theta_\vp(n,s))}I_{\vp^{-1}(y)}^{v}(\theta_\vp(n,s))=\infty.
\]
However, this is equivalent, again due to (\ref{zero}), to the recurrence of $v$ for $\theta_\vp(n,s)$.
\end{proof}
\begin{lemma}\label{ex2}
If {\em (EL)} and {\em (EX)} hold then for all $v,y\in V$ and $j\ge 0,$
\[P\left[S_{j+1}(v,0)=y\,|\,{\cal F}_j(v,0)\right]\ge
\kappa(S_j(v,0),y)\quad \text{$P$-a.s..}\]
\end{lemma}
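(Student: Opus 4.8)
The plan is to deduce the lower bound for the extra frog at $v$ from the corresponding bound (EL) for the first non-extra frog at $v$, using the conditional i.i.d.\ property (EX) to transfer it. The key point is that (EX) says that, conditionally on $\cal F_0(v,1)$, the two paths $(S_j(v,0))_{j\ge 0}$ and $(S_j(v,1))_{j\ge 0}$ have the same conditional law and are independent of each other; in particular the path of the extra frog at $v$ has, given $\cal F_0(v,1)$, the same law as the path of the first non-extra frog at $v$. Since $\cal F_0(v,1)$ is generated by $\varkappa$, $T$, $\eta$, all non-extra frogs except $(S_j(v,1))_{j\ge 0}$, and nothing of the extra frog at $v$, while $\cal F_0(v,0)$ is generated by the same data together with $S_0(v,0)=v$ (which is deterministic), we have $\cal F_0(v,0)\overset{P}{=}\cal F_0(v,1)$, so it suffices to work relative to this common $\sigma$-field.

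First I would fix $v,y\in V$ and $j\ge 0$ and argue that, conditionally on $\cal F_0(v,1)$, the process $(S_j(v,0))_{j\ge 0}$ is a Markov-type chain whose one-step transition from $S_j(v,0)$ is governed by the same conditional kernel as that of $(S_j(v,1))_{j\ge 0}$. Concretely, (EX) gives
\[
P\left[(S_k(v,0))_{0\le k\le j+1}\in \cdot \,\big|\,\cal F_0(v,1)\right]=P\left[(S_k(v,1))_{0\le k\le j+1}\in \cdot \,\big|\,\cal F_0(v,1)\right]\quad P\text{-a.s.},
\]
and therefore, taking the regular conditional law and disintegrating over the first $j$ steps,
\[
P\left[S_{j+1}(v,0)=y\,\big|\,\cal F_j(v,0)\right]=P\left[S_{j+1}(v,1)=y\,\big|\,\cal F_j(v,1)\right]\quad P\text{-a.s.}
\]
Here I use that $\cal F_j(v,0)$ is $\cal F_0(v,1)$ augmented by the first $j$ steps $(S_k(v,0))_{0\le k\le j}$ of the extra frog, and $\cal F_j(v,1)$ is $\cal F_0(v,1)$ augmented by the first $j$ steps $(S_k(v,1))_{0\le k\le j}$ of the first non-extra frog; the conditional-i.i.d.\ statement makes the two augmented conditional laws of the $(j{+}1)$-st step agree as functions of the respective first $j$ steps. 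A clean way to phrase this is: there is a measurable kernel $q$ such that both $P[S_{j+1}(v,0)=y\mid \cal F_j(v,0)]=q\big(y;\,\cal F_0(v,1),\,(S_k(v,0))_{0\le k\le j}\big)$ and $P[S_{j+1}(v,1)=y\mid \cal F_j(v,1)]=q\big(y;\,\cal F_0(v,1),\,(S_k(v,1))_{0\le k\le j}\big)$ hold $P$-a.s., which is exactly the content of (EX).

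Finally I would invoke the lower bound in (EL) for the non-extra frog $i=1$, namely $\kappa(S_j(v,1),y)\le P[S_{j+1}(v,1)=y\mid\cal F_j(v,1)]$ $P$-a.s., and push it through the identity above. Since $S_0(v,0)=S_0(v,1)=v$ and the kernel $q$ is the same, evaluating at the path $(S_k(v,0))_{0\le k\le j}$ rather than $(S_k(v,1))_{0\le k\le j}$ turns the bound $\kappa(S_j(v,1),y)\le q(y;\cdots,(S_k(v,1))_{0\le k\le j})$ into $\kappa(S_j(v,0),y)\le q(y;\cdots,(S_k(v,0))_{0\le k\le j})=P[S_{j+1}(v,0)=y\mid\cal F_j(v,0)]$ $P$-a.s., which is the claim. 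The step I expect to be the main obstacle is the bookkeeping around regular conditional distributions: one must check that $\cal F_0(v,0)$ and $\cal F_0(v,1)$ coincide up to $P$-null sets and that the conditional law produced by (EX) can legitimately be disintegrated to yield a \emph{common} measurable transition kernel $q$, so that inequalities valid $P$-a.s.\ along one frog's path transfer to the other frog's path. Everything else is a routine application of (EX) and the lower bound in (EL).
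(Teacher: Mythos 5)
Your overall strategy --- transferring the lower bound in (EL) from the first non-extra frog at $v$ to the extra frog via the conditional i.i.d.\ property (EX) --- is the same as the paper's, but there is a concrete error at the point you yourself flag as the ``main obstacle,'' and it is not merely bookkeeping. You assert that $\mathcal F_0(v,0)$ and $\mathcal F_0(v,1)$ coincide up to null sets. They do not: by the definition of $\mathcal F_j(v,i)$, the exceptional index $(v,0)$ does not lie in $V\times\N$, so for $i=0$ the first part of the index set is all of $\N_0\times V\times\N$. Hence $\mathcal F_j(v,0)$ contains the \emph{entire} trajectory $(S_m(v,1))_{m\ge 0}$ of the first non-extra frog at $v$ (the paper states this explicitly right after the definition), whereas $\mathcal F_j(v,1)$ withholds its steps past $j$. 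Consequently $\mathcal F_j(v,0)$ is \emph{not} ``$\mathcal F_0(v,1)$ augmented by the first $j$ steps of the extra frog'': it is that $\si$-field further augmented by the full future of $S_\cdot(v,1)$. Your claimed identity of the two one-step conditional kernels therefore does not follow from (EX) as you state it, because $P[S_{j+1}(v,0)=y\mid\mathcal F_j(v,0)]$ is a priori also a function of the future of frog $(v,1)$.

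The missing ingredient is exactly what the paper proves as its equation (\ref{srw}): writing $W_v$ for everything except $F_v:=S_\cdot(v,1)$ and $E_v:=S_\cdot(v,0)$, one must show that $P[E_v\in C\mid W_v,F_v,f(E_v)]=P[E_v\in C\mid W_v,f(E_v)]$ a.s.\ for $f$ the projection onto the first $j$ coordinates, i.e.\ that the extra conditioning on $F_v$ hidden inside $\mathcal F_j(v,0)$ can be discarded. This does follow from the conditional independence in (EX) (it is an instance of the standard fact that $E_v\perp F_v$ given $W_v$ implies $E_v\perp F_v$ given $(W_v,f(E_v))$), and once it is in place the paper concludes as you intend, using the exchangeability $(W_v,F_v,E_v)\overset d=(W_v,E_v,F_v)$ to get a \emph{distributional} identity between $P[S_{j+1}(v,0)=y\mid\mathcal F_j(v,0)]-\kappa(S_j(v,0),y)$ and the corresponding quantity for frog $(v,1)$, which is a.s.\ nonnegative by (EL). (Note also that your displayed pointwise a.s.\ equality of the two conditional probabilities cannot hold as stated, since the two sides are functions of different, conditionally independent paths; only the kernel/distributional version is correct.) So the gap is fixable, but the step you dismissed is the actual content of the lemma.
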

We postpone the proof of Lemma \ref{ex2} to Appendix \ref{ef}.
\begin{proof}[Proof of Theorem \ref{012}] 
  We assume that there is a $u\in V$ such that $P[R_u]>0$ since
  otherwise there is nothing to prove.  We need to show that for all $v\in V$,
\begin{equation}\label{bb}
P[R_v]=P[\eta(v)\ge 1,\, \#\cal{C}(v)=\infty].
\end{equation}
Fix $v\in V$. Let 
\begin{equation}\label{go}
P[\#\cal{C}(v)=\infty]>0
\end{equation}
 since (\ref{bb}) is
trivial otherwise due to Lemma \ref{ch}.  We shall prove
  \eqref{bb} in five steps.

  {\em Step 1: For a.e.\ realization, the probability to reach a
    recurrent site from a fixed site $x$ of the infinite cluster is
    positive.} Define for all $x\in V$ the random variable
\[D(x):=\sup\left\{\frac {\1_{\bar R_{x_m}}}{m+1}\prod_{i=1}^{m}\kappa(x_{i-1},x_i)\ \Big|\  m\ge 0,  x=x_0, x_1,\ldots,x_m\in V\right\},\]
where $\sup\emptyset:=0$ and $\prod_{i=1}^0a_i:=1$. Note that  $D(x)=1$ iff the supremum in the definition above is attained for $m=0$, i.e.\ iff  $x$ is recurrent for $(\eta,\bar S)$. 
The  random variable $D(x)$ will serve as a lower bound on the probability
that an extra frog currently located at $x$ will ever reach a site $y$ which is recurrent for $(\eta,\bar S)$. We shall show that for all $x\in V$ such that $P[\#\cal{C}(x)=\infty]>0$,
\begin{equation}\label{do}
 P[D(x)>0\mid\#\cal{C}(x)=\infty]=1.
\end{equation}
Since $P[R_u]>0$ we have by assumption (T) that  $P[\bar R_u]>0$.
Therefore, also $P[\bar R]>0$, where $\bar R:=\bigcup_{w\in V}\bar R_w$. 
Due to Lemma \ref{nash}, $\bar R\in\cal I$. Consequently, by (ERG), 
$P[\bar R]=1.$ 
Hence there is a.s.\ a $w\in V$ for which $\bar R_w$ occurs and for which  by Lemma \ref{ch}, $\cal{C}(w)$ is infinite. If  $\cal{C}(x)$ is infinite then by (UC) a.s.\ $\cal{C}(x)=\cal{C}(w)$ and hence there is a path
connecting $x$ to  $w$ along which all
ellipticity variables $\varkappa$ are strictly
positive. Multiplying these variables yields (\ref{do}).

{\em Step 2: A modification of (\ref{do}) ``as seen from the extra frog'' holds.} More precisely, we shall show that 
\begin{equation}\label{MS}
P\left[\limsup_{j\to\infty}D(S_j(v,0))>0\ \Bigg|\ \#C(v)=\infty\right]=1.
\end{equation}
By (Xv), $X_v$ and $(\#\mathcal C(x), D(x))_{x\in V}$ are
independent and $P[X_v=v]>0$. Hence, due to (\ref{go}), $P[\#\cal{C}(X_v)=\infty]>0$
and,
 from (\ref{do}), $\vP[D(X_v)>0]=1$,
where $\vP:=P[\,\cdot\mid \#\cal{C}(X_v)=\infty]$. Therefore, since
$P_v\ll \vP$ by (ID),
\begin{equation}\label{geh}
P_v[D(X_v)>0]=1.
\end{equation}
Define  $f:\{0,1\}^V\times[0,1]^{V^2}\times V\to[0,1]$  by
\[f\left((r_x)_{x\in V},(a_{x,y})_{x,y\in V},y_0\right)
:=\sup\left\{\frac {r_{y_m}}{m+1}\prod_{i=1}^m a_{y_{i-1},y_i}\ \bigg|\ m\ge 0, y_1,\ldots,y_m\in V\right\}.\]
Then for all $x\in V$, by definition (\ref{evp}),
\[f(Z(x))=\sup\left\{\frac {\1_{\bar R_{\vp_x(y_m)}}}{m+1}\prod_{i=1}^m \kappa(\vp_x(y_{i-1}),\vp_x(y_i))\ \bigg|\ m\ge 0, o(x)=y_0,y_1,\ldots,y_m\in V\right\}.
\]
Replacing  $\vp_x(y_i)$ with $x_i$ and recalling that $\phi_x(o(x))=x$  we obtain 
$D(x)=f\left(Z(x)\right).$
Therefore, 
(ID) implies that $(D(S_j(X_v,0)))_{j\ge 0}$ is identically distributed under $P_v$.
Consequently, by continuity,
\begin{eqnarray*}
\lefteqn{P_v\left[\limsup_{j\to\infty} D(S_j(X_v,0))> 0\right]=
\lim_{K\to\infty}\lim_{J\to\infty}P_v\left[\sup_{j\ge J}D(S_j(X_v,0))> 1/K\right]}\\
&\ge&\lim_{K\to\infty}\limsup_{J\to\infty}P_v\left[D(S_J(X_v,0))> 1/K\right]\ =\
\lim_{K\to\infty}P_v\left[D(S_0(X_v,0))> 1/K\right]=1
\end{eqnarray*}
due to $S_0(X_v,0)=X_v$ and (\ref{geh}). 
 Therefore, $\vP$-a.s.\ $\limsup_{j\to\infty}D(S_j(X_v,0))>0$ because $\vP\ll
P_v$. Due to (Xv) 
and $P[X_v=v]>0$ this implies \eqref{MS}.

{\em Step 3: The extra frog which starts in the infinite cluster will
  hit a recurrent site.}  We shall argue that
\begin{equation}
  \label{ehit}
  P[\exists j\ge 0\ D(S_j(v,0)=1\,|\,\#{\cal C}(v)=\infty]=1.
\end{equation}
Define for $K\ge 1,$ $\tau_{0,K}:=-K$ and then recursively for $m\ge 1$,
\begin{eqnarray*}
\tau_{m,K}&:=&\inf\{j\ge\tau_{m-1,K}+K\,|\,D(S_j(v,0))>1/K\}.
\end{eqnarray*}
Note that $\tau_{m,K}$ is a stopping time w.r.t.\  the filtration $(\F_j(v,0))_{j\ge 0}$. 
Define the events
\[
A_{m,K}:=\bigcap_{j=0}^{K-1}\left\{ D\left(S_{\tau_{m,K}+j}(v,0)\right)<1, \tau_{m+1,K}<\infty\right\}\in\F_{\tau_{m+1,K}}(v,0).
\]
Roughly speaking, on the event $A_{m,K}$ the extra frog from site $v$ is at time $\tau_{m,K}$  not too far from a recurrent site, but still does not reach any recurrent site within the next $K-1$ steps.
Then for all $M\ge 0$ and $K\ge 2$,
\begin{equation}\label{PD}
P\left[\bigcap_{m=1}^{M} A_{m,K}\right]
=
E\left[P\left[A_{M,K}\mid \F_{\tau_{M,K}}(v,0)\right];\bigcap_{m=1}^{M-1} A_{m,K}\right].
\end{equation}
On the event $A_{M-1,K}$ we have $\tau_{M,K}<\infty$ and therefore
$D\left(S_{\tau_{M,K}}(v,0)\right)>1/K$. Consequently, there is a path
of length less than $K$ which starts at $S_{\tau_{M,K}}(v,0)$ and ends
at a recurrent site and along which the product of the ellipticity
variables is larger than $1/K$. By repeated application of Lemma \ref{ex2} we
conclude that the probability that the extra frog originating at $v$
does not reach a recurrent site in the next $K-1$ steps  after time $\tau_{M,K}$ is at most
$1-1/K$. Hence,
$P\left[A_{M,K}\mid \F_{\tau_{M,K}}(v,0)\right]\le 1-1/K.$ Induction
over $M$ then yields that the expression in (\ref{PD}) can be
estimated from above by $(1-1/K)^M$.  By letting $M\to\infty$ we
obtain for all $K\ge 1$,
\begin{equation}\label{PP}
P\left[\bigcap_{m\ge 1}A_{m,K}\right]=0.
\end{equation}
By (\ref{MS}) and continuity,
\begin{eqnarray*}\lefteqn{
  P\left[\forall j\ge 0\ D(S_j(v,0))<1\, \mid\,  \#C(v)=\infty\right]}\\
&=&\lim_{K\to\infty}P\left[\forall 
j\ge 0\ D(S_j(v,0))<1,\ \limsup_{k\to\infty}D(S_k(v,0))>1/K\ \Bigg|\ \#C(v)=\infty\right]\\
 &\le&\liminf_{K\to\infty}P\left[\bigcap_{m\ge 1}A_{m,K}\ \Bigg|\ \#C(v)=\infty\right] 
\overset{(\ref{PP})}{=}0.
\end{eqnarray*}
This implies \eqref{ehit}.

{\em Step 4: Adding the extra frog to a site in the infinite cluster makes that site recurrent.} By Step 3 we have  that if
$\mathcal C(v)$ is infinite then waking up the  initial frogs at $v$ and the extra frog at $v$ results $P$-a.s.\ in waking up the frogs in
at least one (random) site $z$ which is recurrent for   $(\eta,\bar S)$ and hence also recurrent for $(\eta,S)$. This in turn
causes frogs from infinitely many distinct sites to visit
$z$. Due to the lower bound in (EL) the same holds a.s.\ true
for any other point in $\cal C(z)$ as well.  The site $v$ is such a
point since $\cal C(z)$ is infinite by Lemma \ref{ch} and therefore identical to $\cal C(v)$ due to (UC).

 To phrase this conclusion more precisely, consider for $(n,s^0)\in\mathbb F^0$, $n=(n(x))_{x\in V}$, $s^0=(s_j(x,i))_{i,j\ge 0, x\in V}$, the configuration
\[
a_v(n,s^0):=\left(\left(n(x)+\1_{x=v}\right)_{x\in V},\left(s_j(x,i-\mathbf 1_{x=v})\right)_{j\ge 0, x\in V,i\ge 1}\right)\in\mathbb F,
\]
which we get by adding the extra frog at $v$ to the set of initial
frogs.  We have shown  above that 
\begin{equation}\label{80}
P\left[\mbox{$v$ is recurrent for $a_v(\eta,S^0)$}\mid \#\cal C(v)=\infty\right]=1.
\end{equation}
{\em Step 5: Removing the extra frog.} Consider for
$(n,s^0)\in\mathbb F^0$ also the configuration
\[
t_v(n,s^0):=\left(n,\left(s_j(x,i-\mathbf 1_{x=v, i=1})\right)_{j\ge 0, x\in V,i\ge 1}\right)\in\mathbb F,
\]
 which we get by replacing the first frog at $v$ by the extra frog at $v$. 
Observe that if $n(v)\ge 1$ then for all $j\ge 0$, 
\begin{equation}
  \label{incl}
  W_j\left(a_v(n,s^0)\right)\subseteq W_j\left(n,s\right)\cup
W_j\left(t_v(n,s^0)\right),
\end{equation}
where $s=(s_j(x,i))_{j\ge 0, x\in V,i\ge 1}$.
Indeed, each frog $\mathfrak{f}$ at a site from the set
$W_j\left(a_v(n,s^0)\right)$ was either activated at time $0$ or was
woken up by some frog, its ``ancestor'' (if $\mathfrak{f}$ was woken
up by several frogs simultaneously then we pick any one as its
``ancestor''). Following the ancestry line back to the time $0$ we can
identify which of the frogs woken up at time $0$ started this ancestry
line. If the line was started by the extra frog at $v$ then
$\mathfrak{f}$ is located at a site in $W_j\left(t_v(n,s^0)\right)$,
if not then we can say that $\mathfrak{f}$ is at a site from
$W_j\left(n,s\right)$.

Therefore, by (\ref{incl}), if $v$ is recurrent for $a_v(n,s^0)$ and
$n(v)\ge 1$, then it is recurrent for $(n,s)$ or for $t_v(n,s)$.
Hence, 
\begin{eqnarray}\nonumber
\lefteqn{P[\mbox{$\eta(v)=0$ or $\#\cal C(v)<\infty$}]}\\ \nonumber
&\overset{(\ref{80})}{\ge}&P[\mbox{$\eta(v)=0$ or $v$ is transient for $a_v(\eta,S^0)$}]\\ \nonumber
&\ge& P\left[\mbox{$v$ is transient for $\left(\eta,S\right)$ and for $t_v(\eta,S^0)$}\right]\\ 
\label{bo}
&=&E\left[P\left[R_v^c\cap\left\{\mbox{$v$ is transient for $t_v(\eta,S^0)$}\right\}\mid \mathcal F_0(v,1)\right]\right]\\  \nonumber
&\stackrel{\rm (EX)}{=}& E\left[P\left[R_v^c\mid \mathcal F_0(v,1)\right]P\left[\mbox{$v$ is transient for $t_v(\eta,S^0)$}\mid \mathcal F_0(v,1)\right]\right]\\ \nonumber
&\stackrel{\rm (EX)}{=}&E\left[P\left[R_v^c\mid \mathcal F_0(v,1)  \right]^2\right]\
\ge\ P[\mbox{$\eta(v)=0$ or $\#\cal C(v)<\infty$}],
\end{eqnarray}
where the last inequality holds since by Lemma \ref{ch} a.s.\ 
\begin{eqnarray}\label{mfo}
P\left[R_v^c\mid \mathcal F_0(v,1)  \right]&\ge& 
P\left[\mbox{$\eta(v)=0$ or $\#\cal C(v)<\infty$}\mid \mathcal F_0(v,1)  \right]\\
&=&\1_{\{\eta(v)=0\ \mbox{\scriptsize or}\ \#\cal C(v)<\infty\}}.\nonumber
\end{eqnarray}
Therefore, the inequalities in (\ref{bo}) are, in fact,
identities. Hence we also have $P$-a.s.\ equality in (\ref{mfo}).
Taking expectations yields (\ref{bb}).
\end{proof}

\section{Examples illustrating the scope of Theorem~\ref{012}}\label{3}
In this section we give examples of frog processes covered by our  main
result, Theorem \ref{012}, as well as some counterexamples which demonstrate that our
assumptions are essential for the validity of the zero-one law.  The
examples are split into two groups. The first group concerns models
where frog dynamics is a (quasi-) transitive Markov chain as well as an
example where our general setting goes beyond the chain quasi-transitivity
condition. The second group discusses models where the underlying frog
dynamics is random walk in random environment so that under the
averaged measure the processes are not markovian. For all examples in
the first group $\cal{C}(v)=V$ for all $v\in V$, while the second group
includes examples with degenerate environments such as random walks on
the infinite percolation cluster.

\subsection{Transitive Markov chains and more}
In this subsection we assume that
$K:V\times V\to[0,1]$ is a stochastic matrix. 
Let $\Phi$ be a subgroup of ${\rm Sym}(V)$ such that for all $\vp\in\Phi$,
\begin{equation}
K\left(\vp(u),\vp(v)\right)=K(u,v)\quad\mbox{for all $u,v\in V$.} \label{tra}
\end{equation}
For $x,y\in V$ we say that $x\sim y$ iff there is  a $\vp\in\Phi$ such that $\vp(x)=y$.
Since $\Phi$ is a group, $\sim$ is an equivalence relation. Let $[x]:=\{\vp(x)\,|\,\vp\in\Phi\}$ be the equivalence class of 
$x\in V$, also called the {\em orbit} of $x$. If $(X_n)_{n\ge 0}$ is a Markov chain with transition matrix $K$ then $([X_n])_{n\ge 0}$, the so-called {\em factor chain} (cf.\ \cite[(1.31)]{Woe00}),  
is a Markov chain with state space $\tilde V:=\{[x]\,|\, x\in V\}$ and  transition matrix 
\[\tilde K([x],[y]):=\sum_{z\in [y]}K(x,z).\]
\begin{theorem}\label{01q}
  Let $K$ be irreducible, all orbits $[x],\ x\in V,$ be infinite, and
  $\tilde K$ have an invariant probability measure $\tilde\mu$.
  Assume that the frog trajectories $(S_j(x,i))_{j\ge 0}$,
  $x\in V,\ i\ge 1$, are Markov chains with common transition matrix
  $K$.
  Suppose that
  $\eta(x)$ and $\eta(y)$ have the same distribution whenever
  $x\sim y$
and  that the quantities $\eta(x), (S_j(x,i))_{j\ge 0}$, $x\in V,\ i\ge 1,$ are independent.
Then the zero-one law for recurrence and transience holds.
\end{theorem}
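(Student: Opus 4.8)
The plan is to derive Theorem~\ref{01q} from the general zero-one law, Theorem~\ref{012}, by choosing the auxiliary random variables appropriately. Since the frogs are independent Markov chains with an irreducible transition matrix $K$, we have $\cal{C}(v)=V$ for every $v$, so (UC) is trivial once we set $\kappa(x,y):=K(x,y)$; then (EL) holds with equality in the middle (and the upper bound $\1_{K(S_j(v,i),y)>0}$ is of course satisfied). We take $T(x,i)\equiv\infty$, so that $\bar S=S$, the stopped configuration coincides with the original one, $\bar R_v=R_v$, and assumption (T) is void. By Remark~\ref{hai} we may assume (EX) without loss of generality after enlarging the probability space with extra frogs. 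The representative map $o:V\to V$ should send each vertex to a fixed chosen representative of its orbit under $\Phi$, and $\vp_x\in\Phi$ is chosen with $\vp_x(o(x))=x$; such a $\vp_x$ exists precisely because $x\sim o(x)$.

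The two assumptions that require real work are (ERG) and (ID). For (ERG), the key point is that, because the $\eta(x)$ are constant in distribution along orbits and the frog trajectories are i.i.d.\ with a $\Phi$-invariant law (by \eqref{tra}), the distribution of $(\eta,S)=(\eta,\bar S)$ is invariant under $\theta_\vp$ for every $\vp\in\Phi$. One then argues that the $\si$-field $\cal I$ of almost-invariant events (invariant w.r.t.\ all of ${\rm Sym}(V)$, hence in particular w.r.t.\ $\Phi$) is $P$-trivial. The natural route is a Hewitt--Savage / ergodicity argument: the group $\Phi$ acts on the index set, the coordinates $(\eta(x),(S_\cdot(x,i))_{i\ge1})_{x\in V}$ are independent across $x$ and identically distributed within each orbit, and each orbit is infinite; a tail/exchangeability argument then shows any $\Phi$-invariant (mod $P$) event has probability $0$ or $1$. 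Some care is needed because $\Phi$ need not act transitively on $V$ — only on each orbit — but since there are at most countably many orbits and an invariant event restricted to the product over one orbit is governed by an infinite exchangeable family, one gets triviality orbit by orbit and then combines. I expect this ergodicity step, making rigorous the passage from ``invariant under $\Phi$'' to ``trivial'', to be the main obstacle, and it is likely handled via a lemma in one of the appendices.

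For (ID): here the extra frog performs a $K$-Markov chain, and by (Xv) we take $X_v\equiv v$, so $X_v$ is trivially independent of everything and $P[X_v=v]=1>0$; also $\#\cal{C}(X_v)=\#V=\infty$ a.s., so the conditioning $P[\,\cdot\mid\#\cal{C}(X_v)=\infty]$ is just $P$. The environment viewed from $x$, namely $Z(x)=\big((\1_{R_{\vp_x(y)}})_y,(K(\vp_x(y),\vp_x(z)))_{y,z},o(x)\big)$, has a distribution depending only on the orbit of $x$ and on the law of the indicator field of recurrent sites transported back by $\vp_x$. The plan is to construct $P_v$ as the law of the ``environment viewed from the extra frog'' process made stationary via the factor chain: since $\tilde K$ has invariant probability $\tilde\mu$, the factor chain $([S_j(v,0)])_{j\ge0}$ can be made stationary by starting the orbit-component from $\tilde\mu$, and conditionally on the orbit the vertex is uniform over that (infinite) orbit in a way compatible with $K$. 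One checks that tilting the initial distribution of the extra frog (and nothing else) gives a measure $P_v$ mutually absolutely continuous with $P$ — absolute continuity in both directions uses irreducibility of $K$ together with $\tilde\mu$ being a probability measure — under which $Z(S_j(v,0))$, $j\ge0$, is identically distributed. With (EL), (UC), (T), (ERG), (EX), (Xv), (ID) all verified, Theorem~\ref{012} yields the zero-one law for $(\eta,S)$, which is exactly the claim.
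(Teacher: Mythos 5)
Your overall strategy matches the paper's: apply Theorem~\ref{012} with $\kappa:=K$, $T\equiv\infty$, independent extra frogs for (EX), and an ``independent coordinates with infinite orbits'' argument for (ERG) (the paper does this via Proposition~\ref{iid} in Appendix~B, essentially the Hewitt--Savage-type statement you anticipate; your ``orbit by orbit, then combine'' phrasing is not literally how that lemma works --- an invariant event need not be measurable over a single orbit --- but you correctly flag this step as delegated to an appendix lemma).

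The genuine gap is in your treatment of (Xv) and (ID). You set $X_v\equiv v$ and then propose to obtain $P_v$ by ``tilting the initial distribution of the extra frog'' so that the factor chain starts from $\tilde\mu$. This cannot work: (ID) requires $P_v$ to be \emph{equivalent} to $P[\,\cdot\mid\#\mathcal C(X_v)=\infty]=P$, and equivalence preserves almost-sure statements, so under any admissible $P_v$ the extra frog still starts at $v$ and its factor chain starts at the point mass $\delta_{[v]}$, not at $\tilde\mu$. Concretely, the third component of $Z(S_j(X_v,0))$ is $o(S_j(v,0))$; identical distribution over $j$ under $P_v$ would force $o(S_j(v,0))=o(v)$ $P_v$-a.s., hence $P$-a.s., i.e.\ the factor chain never leaves $[v]$ --- false whenever there is more than one orbit (e.g.\ the comb of Example~\ref{co1}). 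This is exactly why the general theorem carries the auxiliary variable $X_v$: the paper's proof takes $X_v$ \emph{random}, independent of $(\eta,S^0)$, with $P[X_v=r]=\tilde\mu([r])$ for $r\in\{v\}\cup\mathcal O\setminus\{o(v)\}$ (so $P[X_v=v]=\tilde\mu([v])>0$ by irreducibility of $\tilde K$), which makes $(o(S_j(X_v,0)))_{j\ge0}$ a stationary Markov chain and lets one simply take $P_v:=P$. Your deterministic choice of $X_v$ discards the one degree of freedom that the hypothesis ``$\tilde K$ has an invariant probability measure $\tilde\mu$'' is there to exploit. (A further small symptom of the same confusion: ``the vertex is uniform over that (infinite) orbit'' is not a probability distribution.)
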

\begin{proof}
  We check the assumptions of Theorem \ref{012}.  Assumption (EL)
  holds with $\varkappa(x,y):=K(x,y).$ Since $K$ is irreducible, (UC)
  is satisfied as well.  We set $T\equiv \infty$ so that (T) holds
  trivially.

For the proof of  (ERG)
we shall use Proposition \ref{iid} and
consider the independent random variables $H(x):=\left(\eta(x),\left(S_j(x,i)\right)_{j\ge 0, i\ge 1}\right)$, $x\in V$. 
For $\vp\in\Phi$, $m\ge 0$, and $(s_j(i))_{j\ge 0, i\ge 1}\in V^{\N_0\times \N}$ define
$g_\vp(m,(s_j(i))_{j\ge 0, i\ge 1}):=(m,(\vp^{-1}(s_j(i)))_{j\ge 0, i\ge 1})$. Since $\vp\in {\rm Sym(V)}$, the random variables  $H_\vp(x):=g_\vp(H(\vp(x))$, $x\in V,$ are independent as well.
Moreover, 
for all $\vp\in\Phi, x\in V$, and $i\ge 1$, 
$\left(\vp^{-1}(S_j(\vp(x),i))\right)_{j\ge 0}$ is a Markov chain
starting at $x$ with transition matrix $K$
 since for all $y\in V$,
\[P[\vp^{-1}(S_j(\vp(x),i))=y]=P[S_j(\vp(x),i))=\vp(y)]=K^j(\vp(x),\vp(y))=K^j(x,y).
\]
Therefore,  due to independence,
\begin{equation}\label{ny}
\left(H_\vp(x)\right)_{x\in V}=\theta_\vp(\eta,S)\overset{d}{=}(\eta,S)=(H(x))_{x\in V},
\end{equation}
and hence (\ref{dis}). By Proposition \ref{iid} we obtain (ERG). 

To satisfy assumption (EX) let the extra frogs $(S_j(v,0))_{j\ge 0}$, $v\in V,$ be Markov chains
with transition matrix $K$ which are independent of $(\eta, S)$. 

For the remaining conditions let $\mathcal O\subseteq V$ be a complete set of representatives of $\sim$ and $o:V\to \mathcal O$ be a map which assigns to $x\in V$ its representative $o(x)\in[x]$.  For all $x\in V$ choose $\vp_x\in\Phi$ so that $\vp_x(o(x))=x$. We let $X_v$, $v\in V,$ be independent of  $(\eta,S^0)$ so that  $P[X_v=r]=\tilde \mu([r])$ for all $r\in \{v\}\cup \mathcal O\backslash\{o(v)\}$. Note that the requirement $P[X_v=v]>0$ is fulfilled since $\tilde K$ is irreducible since $K$ is so. Therefore (Xv) holds.

Finally, we shall check (ID).  Fix $v\in V$ and set $P_v:=P$.
Then for all measurable $B\subseteq\{0,1\}^V, C\subseteq [0,1]^{V^2}$, $D\subseteq \mathcal O$, and all $j\ge 0$,
\begin{eqnarray*}\lefteqn{P\left[Z\left(S_j(X_v,0)\right)\in B\times C\times D\right]}\nonumber\\
&=& \sum_{x\in V}
P\left[\left(\1_{R_{\vp_x(y)}}\right)_{y\in V}\in B,\left(\kappa\left(\vp_x(y),\vp_x(z)\right)\right)_{y,z\in V}\in C, S_j(X_v,0)=x, o(x)\in D\right]\nonumber\\
&=& \sum_{x\in V}
P\left[\left(\1_{R_{\vp_x(y)}}\right)_{y\in V}\in B\right]\1_C\left(\left(\kappa(y,z)\right)_{y,z\in V}\right) 
P\left[S_j(X_v,0)=x, o(x)\in D\right]
\end{eqnarray*}
 by (\ref{tra}) and independence of $(\eta,S)$ from $(X_v,S_\cdot(\cdot,0))$. 
Due to Lemma \ref{nash} and (\ref{ny}), the above expression is equal to
\[
P\left[\left(\1_{R_{v}}\right)_{v\in V}\in B\right]\1_C\left(\left(\kappa(y,z)\right)_{y,z\in V}\right) 
P\left[o\left(S_j(X_v,0)\right)\in D\right].
\]
This does not depend on $j$ since $(o(S_j(X_v,0)))_{j\ge 0}$ is a stationary Markov chain.
Therefore, (ID) holds. Theorem \ref{012} now yields the claim.
\end{proof}
\begin{proof}[Proof of Theorem~\ref{01}] Since the frog trajectories are
  assumed to be transitive Markov chains, there is only one
  orbit. Thus Theorem \ref{01} follows from Theorem~\ref{01q}.
  \end{proof}
  \begin{ex}[\rm \bf Quasi-transitive Markov chains]\label{qtmc} 
      {\rm If in the setting of Theorem~\ref{01q} there are only finitely
      many orbits, i.e.\ the factor chain is finite, then there is
      always an invariant probability measure
      $\tilde{\mu}$.
      Thus the 0--1 law holds. 

      We remark that in this case the Markov chain $(V,K)$ is called
      {\em quasi-transitive} under the action of the group $\Phi$
      (see \cite[pp.\ 13,14]{Woe00}).  As a representative example,
      consider a periodic model on $\Z^d$. Namely, set $V=\Z^d $, fix
      a period $L\in\N,\ L\ge 2$, and let $\Phi$ be the group of all
      shifts by vectors from $L\Z^d$, so that $x\sim y$ iff
      $y-x\in L\Z^d$. Then any frog model with ``independent
      ingredients'' such that its frog number distributions and
      Markovian dynamics are compatible with the periodic structure
      (as required by Theorem~\ref{01q}) satisfies the 0--1 law for
      recurrence and transience. \hfill $\Box$}
\end{ex}
Our next example shows that the applicability of Theorem~\ref{01} goes
beyond the quasi-transitive setting.
\begin{ex}[\rm \bf Frogs on a comb I]\label{co1}
{\rm  Let $V:=\Z\times\N_0$ and $p_1,p_2>0$ such that $p_1+p_2<1$. Define for $x\in\Z$,
 \begin{eqnarray*}
 K((x,0),(x+1,0))&:=& p_1,\\
 K((x,0),(x-1,0))&:=& 1-p_1-p_2,\\
 K((x,y),(x,y+1))&:=& p_2\quad \mbox{for $y\ge 0$, and}\\
 K((x,y),(x,y-1))&:=& 1-p_2\quad \mbox{for $y\ge 1$.}
 \end{eqnarray*}
 Let the random quantities $\eta(v), (S_j(v,i))_{j\ge 0}, v\in V,\ i\ge 1,$ be independent and assume that
for each $y\ge 0$ the random variables $\eta(x,y),x\in\Z,$ are identically distributed.
Let $\Phi$ be the group of shifts ``along $\Z$'' of the form $(x,y)\mapsto (x+k,y), k\in\Z.$
They satisfy (\ref{tra}). The orbits are $\Z\times\{y\}$, $y\ge 0$. The transition matrix for the factor chain is given by
\begin{eqnarray*}
\tilde K(\Z\times\{y\},\Z\times\{y+1\})&=&p_2\qquad\mbox{and}\\
\tilde K(\Z\times\{y\},\Z\times\{(y-1)\vee 0\}\})&=&1-p_2.
\end{eqnarray*}
If $p_2<1/2$ then $\tilde K$ admits an invariant probability measure $\tilde \mu$ and Theorem \ref{01q} yields the 0--1 law for recurrence and transience.\hfill $\Box$}
\end{ex}
\begin{cex}[\rm\bf Frogs on a comb II]\label{co2}
{\rm 
If $p_2\ge 1/2$ in the setting of Example \ref{co1} then there is no invariant probability measure $\tilde \mu$ for $\tilde K$. We shall
show that for $p_2>1/2$ the 0--1 law can fail.  
Let $\eta(x,y)$, $x\in\Z,\ y\ge 1$, be identically distributed with
$E[\log_+\eta(x,y)]<\infty$ while $\eta(x,0)$, $x\in\Z$, be
super heavy-tailed so that
\begin{equation}\label{kel}
 \liminf_{k\to\infty}k\,P[\log\eta(x,0)\ge k]>-\log p_1.
\end{equation}
Then, on the one hand, for each $x\in\Z$ the probability, given
$\eta$, of the event that all frogs
$S((x,y),i), y\ge 0,\ 1\le i\le \eta(x,y)$, stay forever on the tooth
$\{x\}\times\N_0$ is bounded from below by
\begin{equation}\label{ihs}
  \prod_{y\ge 0}(1-a^{y+1})^{\eta(x,y)}\ge \exp\left(-\con{aa}\sum_{y\ge 0}\eta(x,y)a^y\right)
\end{equation}
for suitable constants $0<a<1$ and $c_{\ref{aa}}>0$. By Lemma~\ref{bau} ($d=1$) the right hand side of
(\ref{ihs}) is strictly positive. Note that on this event no site
of the tooth $\{x\}\times\N_0$ can be visited infinitely many times
(otherwise, by the ellipticity, the process would not stay on the
tooth). Since $x$ was arbitrary, $P[R_{v}]<P[\eta(v)\ge 1]$ for all $v\in V$.

On the other hand, $P[R_v]>0$ for all $v\in
V$. To show this, let $Z_n$, $n\ge 0$, be the number of active or just
woken up frogs at site $(n,0)$ at time $n$ if at time $0$ we wake up
all initial frogs at $(0,0)$.  As long as it is strictly positive, $(Z_n)_{n\ge 0}$ evolves like a
branching process with offspring distribution Bernoulli$(p_1)$ and
$\eta(n,0)$ immigrants at each time $n$. By \cite[Theorem 2.2]{Bau13} and (\ref{kel}) 
such Markov chain is transient.  Hence $P[\forall n\ge 0: Z_n>0]>0$.
Consequently, waking up the frogs at $(0,0)$ results with positive
probability in waking up the frogs at all sites $(n,0), n\ge 0$.  Any
frog originating at $(n,0), n\ge 0,$ visits $(0,0)$ after $n$ steps
with probability $(1-p_1-p_2)^n$. Moreover, since (\ref{kel}) implies
$E[\log_+\eta(x,y)]=\infty$ we obtain from Lemma~\ref{bau}
($d=1$) that $P$-a.s.\ $\sum_{n}(1-p_1-p_2)^n\eta(n,0)$
diverges. Therefore, by the second part of the Borel-Cantelli lemma
$P[\cdot\mid\eta]$-a.s.\ (and hence also $P$-a.s.) infinitely many
frogs originating at $(n,0), n\ge 0,$ would visit $(0,0)$ in the
shortest possible time, if woken up. We conclude that $P[R_{(0,0)}]>0$
and therefore, by ellipticity, $P[R_{v}]>0$ for all $v\in V$.\hfill $\Box$
}
\end{cex}
\begin{problem}{\rm 
  Study the recurrence and transience of the above frog model on the
  comb for $p_2\le 1/2$.}
\end{problem}
  \begin{cex}[\rm\bf Not identically distributed frog numbers on
    $\Z^d$]\label{pop} {\rm \cite[Theorem 1.3 (ii)]{Pop01} provides
      examples where the frogs perform independent simple symmetric
      random walks on $V=\Z^d$, $d\ge 3$, and the 0--1 law fails. In
      these examples, all the assumptions of Theorem \ref{01} are
      satisfied except that $(\eta(x))_{x\in \Z^d}$ is not identically
      distributed. }
\end{cex}
\subsection{Random walk in random environment (RWRE)}
In this subsection we consider frogs which jump as  independent random walks in a common  random environment
on $V=\Z^d$ for some $d\ge 1$.
Fix $d\ge 1$ and let
\[
\Pi:=\Bigg\{(\pi(x,y))_{x,y\in\Z^d}\in [0,1]^{\Z^d\times\Z^d}\mid
\forall x\in\Z^d: \sum_{y\in\Z^d}\pi(x,y)=1\Bigg\}
\]
be the set of all stochastic matrices on $\Z^d$.
We endow $\Pi$
with the standard Borel $\si$-field $\mathcal B(\Pi)$.
An  element $\pi\in\Pi$ is called  a {\em random walk environment}. 
A time homogeneous Markov chain on $\Z^d$ with transition matrix $\pi$ is called a {\em random walk in the environment} $\pi$.
For the following we need some more notation. 

For all $z\in\Z^d$ we define by $\vp_z(x):=x+z$ the shift $\vp_z$ by
$z$ on $\Z^d$. This shift can also be applied to families of the form
$f=(f(x))_{x\in\Z^d}$ and $g=(g(x,y))_{x,y\in\Z^d}$ by
$\vp_z(f):=(f(x+z))_{x\in\Z^d}$ and
$\vp_z(g):=(g(x+z,y+z))_{x,y\in\Z^d}$, respectively. It can also be
applied to finite sequences $F=(f_1,\ldots,f_n)$ of such families by
setting $\vp_z(F):=(\vp_z(f_1),\ldots,\vp_z(f_n))$.  Moreover, such
$F$ is called {\em ergodic w.r.t.\ the shifts on $\Z^d$} if
$P[A]\in\{0,1\}$ for all events $A$ for which there is a measurable
set $B$ such that
$A \overset P=\left\{\vp_z(F)\in B\right\}$ for all
$z\in\Z^d$.

The {\em random walk environment viewed from the particle} is a Markov chain with state space $\Pi$ and transition kernel $K$ defined by
\begin{equation}\label{KK}
K(\pi,B):=\sum_{z\in\Z^d}\pi(0,z)\1_B\left(\vp_z(\pi)\right),
\qquad \pi\in\Pi, B\in\mathcal B(\Pi).
\end{equation}
Note that if  $(S_j)_{j\ge 0}$ is a random walk in the environment
  $\pi$ starting at $0$, then $(\phi_{S_j}(\pi))_{j\ge 0}$ is a Markov
chain on $\Pi$ with kernel $K$ starting from $\pi$.

For our general result below we
need to augment the random walk environment with the numbers of
initial frogs at each site. Therefore, we consider
$\bar \Pi:=\N_0^{\Z^d}\times \Pi$, endow it with its standard Borel
$\si$-field $\mathcal B(\bar\Pi)$ and call its elements
$\bar
\pi=(n,\pi)=\left((n(x))_{x\in\Z^d},(\pi(x,y))_{x,y\in\Z^d}\right)$
{\em augmented environments}.  The {\em augmented environment viewed
  from the particle} is a Markov chain with state space $\bar \Pi$ and
transition kernel $\bar K$ defined by
\[
\bar K(\bar \pi,\bar{B}):=\sum_{z\in\Z^d}\pi(0,z)\1_{\bar{B}}\left(\vp_z(\bar\pi)\right),\qquad
\bar \pi=(n,\pi)\in\bar \Pi, \bar{B}\in\mathcal B(\bar \Pi).
\]
As above, if $\bar \pi=(n,\pi)\in\bar \Pi$ and $(S_j)_{j\ge 0}$ is a random walk in the environment
$\pi$ starting at $0$, then 
\begin{equation}\label{mc}
(\phi_{S_j}(\bar\pi))_{j\ge 0}\quad\text{is a Markov
chain on $\bar\Pi$   with kernel $\bar K$ starting from $\bar\pi$.} 
\end{equation}
\begin{theorem}[\rm\bf RWRE]\label{rm01}
Suppose that the family $\varkappa=(\kappa(x,y))_{x,y\in\Z^d}$ of ellipticity variables takes values in 
$\Pi$ and satisfies {\rm (UC)} with $P[\#\cal{C}(0)=\infty]>0.$
Assume that $\bar \kappa:=\left((\eta(x))_{x\in\Z^d}, \kappa\right)$ is stationary and ergodic w.r.t.\ the shifts on $\Z^d$.  Suppose that 
 given $\bar \kappa$,  the sequences $(S_j(x,i))_{j\ge 0}, x\in \Z^d,\ i\ge 1$, are independent Markov chains with common transition matrix $\varkappa$.
Finally, assume that there is  a probability measure  $\bar P$ on $\bar \Pi$ which is invariant w.r.t.\ $\bar K$ and equivalent to the distribution of $\bar\kappa$  under $P[\,\cdot\mid\#\cal{C}(0)=\infty]$. 
Then the zero-one law for recurrence and transience holds.
\end{theorem}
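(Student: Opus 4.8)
The plan is to verify the seven hypotheses of Theorem~\ref{012} for the RWRE frog model, mirroring the structure of the proof of Theorem~\ref{01q} but with the random environment playing the role previously played by a fixed transition matrix. First I would dispose of the easy conditions: (EL) holds by construction with the given ellipticity family $\varkappa$, since conditionally on $\bar\kappa$ the frogs are Markov chains with transition matrix $\varkappa$, so $P[S_{j+1}(v,i)=y\mid\mathcal F_j(v,i)]=\varkappa(S_j(v,i),y)$, which lies between $\varkappa(S_j(v,i),y)$ and $\1_{\varkappa(S_j(v,i),y)>0}$; (UC) is assumed outright; and (T) is trivial once we set $T\equiv\infty$ (so also $\bar S=S$ and $\bar R_v=R_v$). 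For (EX), introduce the extra frogs $(S_j(v,0))_{j\ge0}$ to be, conditionally on $\bar\kappa$, independent random walks in the environment $\varkappa$ started at $v$, independent of everything else given $\bar\kappa$; then given $\mathcal F_0(v,1)$ (which includes $\varkappa$) the paths $(S_j(v,0))_j$ and $(S_j(v,1))_j$ are both random walks in environment $\varkappa$ from $v$ and are conditionally independent, so they are i.i.d.\ given $\mathcal F_0(v,1)$.

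Next I would handle (ERG). The natural approach is to transfer ergodicity of $\bar\kappa$ under the $\Z^d$-shifts to ergodicity of $(\eta,\bar S)=(\eta,S)$ under $\mathrm{Sym}(V)$. One wants that any $A\in\mathcal I$ — almost invariant under all permutations of $V$ — is in particular almost invariant under the subgroup of lattice shifts $\{\vp_z:z\in\Z^d\}$, and that under these shifts the law of $(\eta,S)$ is shift-invariant and the corresponding invariant $\sigma$-field is $P$-trivial. The shift-invariance of the law of $(\eta,S)$ follows because conditionally on $\bar\kappa$ the frog trajectories are built in a translation-covariant way from $\varkappa$ (the transition probabilities satisfy $\varkappa(x+z,y+z)=(\vp_z\varkappa)(x,y)$ in distribution by stationarity of $\bar\kappa$), so $\vp_z(\eta,S)\overset d=(\eta,S)$; and triviality of the shift-invariant $\sigma$-field should follow from ergodicity of $\bar\kappa$ together with the conditional independence of the frog paths given the environment — this is where a lemma like Proposition~\ref{iid} or a standard ``ergodicity lifts through conditional i.i.d.\ randomization'' argument is needed. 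Care is required because, unlike in Theorem~\ref{01q}, the frogs are not independent under $P$ (only conditionally on $\bar\kappa$), so one genuinely uses the ergodicity of the environment, not of a product measure.

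For the geometric ingredients: (Xv) holds with $X_v\equiv v$, which makes $X_v$ deterministic hence independent of everything and gives $P[X_v=v]=1>0$; also $\#\mathcal C(X_0)=\#\mathcal C(0)$ has positive probability of being infinite by hypothesis. The crucial assumption (ID) is where the invariant measure $\bar P$ enters. Take $o(x)\equiv 0$ (all of $\Z^d$ is a single orbit under shifts) and $\vp_x:=\vp_x$, the shift by $x$, so that $Z(x)=\big((\1_{R_{\vp_x(y)}})_{y\in V},(\varkappa(x+y,x+z))_{y,z\in V},0\big)$, which is a measurable function of $\vp_x(\bar\kappa)$ together with $(\1_{R_{x+y}})_y$; the point is that $(\1_{R_{x}})_x$ is itself a deterministic function of $\bar\kappa$ and the frog randomization, and by translation covariance $(\1_{R_{x+y}})_y$ is determined by $\vp_x(\bar\kappa)$ (modulo the conditionally-i.i.d.\ frog coins, which are exchangeable). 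So $Z(S_j(v,0))$ is, up to a measure-preserving recoding, a function of $\vp_{S_j(v,0)}(\bar\kappa)=\varphi_{S_j(v,0)}(\bar\kappa)$, and by \eqref{mc} the sequence $(\varphi_{S_j(v,0)}(\bar\kappa))_{j\ge0}$ is a Markov chain on $\bar\Pi$ with kernel $\bar K$ started from $\bar\kappa$. Choosing $P_v$ so that under $P_v$ the augmented environment $\bar\kappa$ has law $\bar P$ (and the frogs and extra frog are built from $\bar\kappa$ as before), the chain $(\varphi_{S_j(v,0)}(\bar\kappa))_j$ becomes stationary because $\bar P$ is $\bar K$-invariant; hence $Z(S_j(v,0))$ is identically distributed under $P_v$, and $P_v$ is equivalent to $P[\,\cdot\mid\#\mathcal C(0)=\infty]$ since $\bar P$ is equivalent to the law of $\bar\kappa$ under that conditioning and the conditional law of the frogs given $\bar\kappa$ is the same under both. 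The main obstacle will be the bookkeeping in (ERG) and (ID): precisely tracking how the recurrence indicators $\1_{R_x}$, which a priori depend on all frog trajectories globally, interact with the permutation/shift action — i.e.\ proving the translation covariance $\1_{R_{x+z}}\circ(\text{shift})$ at the level of the environment-viewed-from-the-particle while respecting that the extra randomness (the i.i.d.\ frog coins) must be re-indexed consistently — and invoking the correct version of ``conditional ergodicity'' (Proposition~\ref{iid} or its analogue) to push $\Z^d$-ergodicity of $\bar\kappa$ up to $\mathrm{Sym}(V)$-triviality of $\mathcal I$. I would therefore carry out the proof in the order: (EL), (UC), (T), (EX) [construct extra frogs], (Xv), then (ERG) [the conditional-ergodicity lemma], and finally (ID) [the change of measure to $\bar P$], and then invoke Theorem~\ref{012}.
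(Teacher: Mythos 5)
Your plan is correct and matches the paper's proof essentially step for step: the same choices $T\equiv\infty$, $X_v\equiv v$, $o(x)\equiv 0$, conditionally independent extra frogs for (EX), reduction of (ERG) to the lattice shifts $\vp_z$, and (ID) via a change of measure making the law of $\bar\kappa$ equal to $\bar P$ combined with stationarity of the environment chain under $\bar K$. The ``bookkeeping'' you flag as the main obstacle is resolved in the paper by the explicit coupling $S_{j+1}(x,i)=S_j(x,i)+f\left(\left(\kappa(S_j(x,i),S_j(x,i)+y)\right)_{y\in\Z^d},U_j(x,i)\right)$ with i.i.d.\ uniforms $U$ independent of $\bar\kappa$, so that $\theta_{\vp_z}(\eta,S)$ and $Z(z)$ become deterministic functions of $\vp_z(U,\bar\kappa)$ and both (ERG) and (ID) reduce to stationarity and ergodicity of the pair $(U,\bar\kappa)$ --- exactly the ``ergodicity lifts through conditional i.i.d.\ randomization'' device you anticipated.
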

\begin{proof} We check the assumptions of Theorem \ref{012}.  (EL) is
  fulfilled by construction. We let $T\equiv\infty$ and  $X_v:=v$ for all $v\in \Z^d$ so that (T) and (Xv)
  trivially hold.  Denote by $\mathcal M(\Z^d)$ the set of all
  probability measures on $\Z^d$.  Choose
  $f:\mathcal M(\Z^d)\times[0,1]\to\Z^d$ such that for all
  $\mu\in\mathcal M(\Z^d)$ and all random variables $X$ which are
  uniformly distributed on $[0,1]$, $f(\mu,X)$ has distribution $\mu$.
  Then we may assume without loss of generality that for all
  $i,j\ge 0, x\in\Z^d$,
\begin{equation}\label{sea}
S_{j+1}(x,i)=S_{j}(x,i)+f\left(\left(\kappa\left(S_j(x,i),S_j(x,i)+y\right)\right)_{y\in\Z^d},U_j(x,i)\right),
\end{equation}
where $U_j(x,i),\ i,j\ge 0,\ x\in\Z^d$, are independent and uniformly
distributed on $[0,1]$ and independent of $\bar \kappa$. Note that
(EX) holds.

We are left to check (ERG) and (ID). For $x\in\Z^d$ let
$o(x):=0\in\Z^d$ and
$U(x):=\left(U_j(x,i)\right)_{j\ge 0, i\ge 1}$. Set
$U:=(U(x))_{x\in\Z^d}$.
Since
$U$ is i.i.d.\ and independent of $\bar\kappa$, $(U,\bar\kappa)$ is stationary
and ergodic\footnote{The proof is similar to the one of
  $(4)\Rightarrow (5)$ in \cite[Theorem 6.1, p.\,65]{Pet83}.} w.r.t.\
the shifts on $\Z^d$. Moreover, there is a deterministic measurable
function $g_1$ such that for all $z\in\Z^d$,
\begin{equation} \label{quak}
\theta_{\vp_z}(\eta,S)\overset{(\ref{perm})}
=\left(\vp_z(\eta),(S_j(x+z,i)-z)_{j\ge 0, x\in\Z^d,i\ge 1}\right)
= g_1\left(\vp_z(U,\bar\kappa)\right).
\end{equation}
Indeed, fix $i\ge 1$ and $x,z\in\Z^d$ and abbreviate
$\Delta_j:=S_j(x+z,i)-z$ for all $j\ge 0$. Then due to (\ref{sea}) 
for all $j\ge 0$,
\[\Delta_{j+1}=\Delta_j+f\left((\kappa\left(\Delta_j+z,\Delta_j+y+z\right))_{y\in\Z^d},U_j(x+z,i)\right).
\]
It follows by induction over $j$ that $\Delta_j$ is a function of
$j,x,i$ and $\vp_z(U,\bar\kappa)$. This implies (\ref{quak}).
Thus, (ERG) follows from the ergodicity of $(U,\bar\kappa)$.

For the proof of (ID) it suffices due to stationarity to consider the
case $v=0$.  We construct the measure $P_0$ on $(\Omega,{\cal F})$ as follows. Set $\wP:=P[\,\cdot\mid \#\cal{C}(0)=\infty]$.  By the
Radon-Nikodym theorem there is a strictly positive density
$h:=d \bar P/d\wP_{\bar\varkappa}:\bar \Pi\to(0,\infty)$, where
$\wP_{\bar\varkappa}$ denotes the distribution of $\bar\varkappa$
under $\wP$.  Define $P_0[A]:=\wE\left[h(\bar\kappa); A\right]$ for all
$A\in\F$ and note that  
\begin{equation}\label{note}
\mbox{the distribution of $\bar\kappa$ under $P_0$ is $\bar{P}$.}
\end{equation}
 As required, $P_0$ and $\wP$ are equivalent. 

Observe that the environment viewed from $z\in \Z^d$
as defined in (\ref{evp}) can be written as
$Z(z)=\left(\left(\1_{R_{x+z}}\right)_{x\in
    \Z^d},\vp_z(\kappa),0\right).$
Therefore, there is due to Lemma \ref{nash} and (\ref{quak}) a
deterministic function $g_2$ such that
$Z(z)=g_2\left(\vp_z(U,\bar\kappa)\right)$
for all $z\in\Z^d$.
Abbreviate $S_j:=S_j(0,0)$. We need to show that the distribution of $Z(S_j)=g_2(\vp_{S_j}(U,\bar\kappa))=g_2(\vp_{S_j}(U),\vp_{S_j}(\bar\kappa))$
under $P_0$
does not depend on $j$. 
Note that $U, U(0,0),$ and $\bar\kappa$ are independent under $P$ and hence also under $P_0$. Moreover, $U$ is stationary under $P$ and so it is under $P_0$. 
Since $S_j$ is measurable w.r.t.\ $\si(U(0,0),\bar\kappa)$, this implies  that
$Z(S_j)$ has the same distribution under $P_0$ as $g_2(U,\vp_{S_j}(\bar\kappa))$. Therefore and again by independence it suffices to show that $(\vp_{S_j}(\bar\kappa))_{j\ge 0}$ is stationary under $P_0$. This is the case due to (\ref{mc}), (\ref{note}) and the invariance of $\bar P$ w.r.t.\ $\bar K$.   
Consequently, (ID) is fulfilled and Theorem
\ref{012}  yields the claim.
\end{proof}
\begin{proof}[Proof of Theorem \ref{rwrc}] The theorem trivially holds
  if $P[\#{\cal C}(0)=\infty]=0$. Therefore, we assume that
  $P[\#{\cal C}(0)=\infty]>0$. Since $(\eta(x))_{x\in\Z^d}$ is i.i.d.\
  and independent of the conductances and of the frog trajectories,
  $\bar\varkappa$ is stationary and
  ergodic\footnote{See the previous footnote.}.  We only need to
  produce an invariant measure $\bar
  P$ which is equivalent to $\wP_{\bar
    \varkappa}$, the distribution of $\bar
  \kappa$ under $\wP:=P[\,\cdot\mid\mathcal \#\cal
  C(0)=\infty]$. First, define
  \[Q[B]:=\frac{E[q(0);\{\kappa\in B\}\cap\{\#{\cal
      C}(0)=\infty\}]}{E[q(0);\#{\cal C}(0)=\infty]},\quad B\in{\cal
    B}(\Pi).\]
  Then $Q$ is equivalent to $\wP_\varkappa$ since $q(0)>0$ on the event $\{\#{\cal C}(0)=\infty\}$.
  Moreover, $Q$ is invariant for $K$ defined in (\ref{KK}) due to the following variation of a standard argument, cf.\ \cite[Lemma 2.1]{Bis11}.  
For every bounded measurable function
  $f:\Pi\to \R$ 
 we have by  stationarity of the  conductances
\begin{align*}
  E&[q(0);\#{\cal C}(0)=\infty]\int_{\Pi}(Kf)(\pi)\,dQ(\pi) 
=\sum_{\|e\|_1=1} E\left[c(\{0,e\}) f(\phi_e(\varkappa));\#{\cal C}(0)=\infty\right]\\ &=
\sum_{\|e\|_1=1} E\left[c(\{-e,0\}) f(\varkappa);\#{\cal C}(-e)=\infty\right]=
\sum_{\|e\|_1=1} E\left[c(\{e,0\}) f(\varkappa);\#{\cal C}(e)=\infty\right]\\ &=
\sum_{\|e\|_1=1} E\left[c(\{0,e\}) f(\varkappa);\{\#{\cal C}(e)=\infty\}
\cap\{c(\{0,e\})>0\}\right]\\ &
            =\sum_{\|e\|_1=1} E\left[c(\{0,e\}) f(\varkappa);\#{\cal C}(0)=\infty\right]
=E[q(0);\#{\cal C}(0)=\infty]\int_\Pi f(\pi)\,dQ(\pi).
\end{align*}
  Denote by $P_\eta$ the distribution of $\eta$ under $P$.
Then $\bar P:=P_\eta\otimes Q$ is invariant w.r.t.\
  $\bar K$ and equivalent to $P_{\bar \varkappa}$. The claim now
  follows from Theorem~\ref{rm01}.
\end{proof}

\begin{ex}[\rm\bf RW in i.i.d.\ environment]\label{r1} {\rm There are
    several examples of RWRE known in which the random environment
    $(\kappa(x,x+\cdot))_{x\in\Z^d}$ is i.i.d.\ and for which there is
    a measure $Q$ on $\Pi$ that is invariant w.r.t.\ $K$ defined in
    (\ref{KK}) and absolutely continuous w.r.t.\ $P_\varkappa$, see
    e.g.\ \cite{BS02a} and the references mentioned therein after
    (0.12) and in the introduction of \cite{Sab13}. For more recent
    results see, e.g.\ \cite[Theorem 1(i)]{Sab13} and \cite{BCR14}.
    In these cases the environment is elliptic, i.e.\
    $\varkappa(x,y)>0$ a.s.\ for all nearest neighbors $x,y\in\Z^d$
    such that $\cal{C}(0)=\Z^d$.  It has been noticed, e.g.\ in
    \cite[Lemma 4]{Sab13}, see also \cite[Lecture 1, Theorem
    1.2]{BS02b}, that such $Q$ is automatically equivalent to
    $P_\varkappa$.   Also in the case of balanced and
      possibly non-elliptic environments considered in \cite{BD14}
      there exists such an equivalent $Q$ provided that
      there is at least one direction $e_i\in\Z^d$ such that
      $\varkappa(x,x+e_i)>0$ a.s.\ for all $x$ \cite{Ber16}.

 In all these cases, if we choose $(\eta(x))_{x\in\Z^d}$ i.i.d.\ and
    independently of $\kappa$, then, as in the proof of Theorem
    \ref{rwrc}, the measure $Q$ can be augmented to a probability
    measure which is invariant w.r.t.\ $\bar K$ so that Theorem
    \ref{rm01} is applicable.\hfill $\Box$ }
\end{ex}

\begin{cex}[\rm\bf RWRE dynamics, (ID) is not satisfied]
\label{res} 
{\rm This example shows that in Theorem \ref{rm01} the requirement of the existence of an invariant probability measure $\bar P$ cannot be dropped.
In \cite{BZZ06}, a $\Pi$-valued environment $\kappa$ on $\Z^d, d\ge 3$, is constructed which is stationary and ergodic w.r.t.\ the shifts on $\Z^d$ and also uniformly elliptic\footnote{i.e.\ there is an $\eps>0$ such that $\varkappa(x,y)>\eps$ $P$-.a.s.\ for all nearest neighbors $x,y\in\Z^d$}, but for which the corresponding RWRE $(X_j)_{j\ge 0}$ starting at $0$ disobeys the so-called 0--1 law for directional transience. (For $d=2$ such an example is constructed in \cite{Hei13}. For a simpler, but not uniformly elliptic example see \cite[Section 3]{ZM01}.) In particular, there are 
$\{0,1,2\}$-valued random variables $N(x)$, $x\in\Z^d$, such that $(\varkappa,N)$ is stationary and ergodic w.r.t.\ the shifts and 
\begin{equation}\label{12}
P[N(X_j)=i\ \mbox{for all $j\ge 0$}]>0
\end{equation}
for $i=1,2$
(see the proof  of \cite[Theorem 3]{BZZ06} on page 847). Now let $(\tilde \eta(x))_{x\in\Z^d}$ be i.i.d.\ and independent of $(\kappa,N)$ such that $E[\log_+(\tilde\eta(0))]=\infty$. Set $\eta(x):=\tilde \eta(x)\1_{N(x)=1}$. 
The 0--1 law for recurrence and transience fails in this case.

Indeed, on the one hand if $\eta(0)\ge 1$ and  if the first frog woken up at 0 stays forever in the set $\{x\,|\, N(x)=1\}$ then it will wake up for each $m\ge 1$ at least an independent $\tilde\eta(0)$-distributed number of frogs at $\ell_1$-distance $m$ from 0. Each such frog  has a chance of at least $\eps^m$ to reach the origin. Lemma \ref{bau} ($d=1$) and the Borel-Cantelli lemma then imply  $P[R_0]\ge P[\eta(0)\ge 1, \forall j\ge 0: N(S_j(0,1))=1]$, which is strictly positive due to (\ref{12}) for $i=1$.  

On the other hand, $P[R_0]< P[\eta(0)\ge 1]$. Indeed, there is a.s.\ a
finite nearest-neighbor path from 0 to a site $v$ with $N(v)=2$. If
$N(0)<2$ and if we wake up the frogs at 0 then with positive
probability they and all the frogs woken up by them will follow this
path to $v$ and then stay forever within $\{x: N(x)=2\}$ without ever
returning to 0 due to (\ref{12}) for $i=2$.  \hfill $\Box$ }
\end{cex}

\section{Recurrence and transience of some inhomogeneous frog models on
  $\Z^d$}\label{4}
\begin{proof}[Proof of Theorem \ref{log}]
  We generate the trajectories $(S_j(x,i))_{j\ge 0}$,
  $x\in\Z^d,\ i\ge 0,$ in the following way.  Enumerate the $2d$ unit
  vectors of $\Z^d$ as $e_1,\ldots, e_{2d}$.  Let $U_j(x,i),$
  $j\ge 0, x\in\Z^d,\ i\ge 0,$ be i.i.d.\ random
  variables, each one uniformly distributed on $[0,1]$.   Since for each $x\in\Z^d$ and $i\ge 1$,
  $((S_k(x,i))_{0\le k\le j})_{j\ge 0}$ is a Markov chain (with state
  space $\bigcup_{j\ge 0}(\Z^d)^{j+1}$) we may assume without loss of
  generality that there are functions
  $f_{j,x,i}:\Z^{d(j+1)}\times[0,1]\to\Z^d, j\ge 0, x\in\Z^d,\ i\ge
  1,$ such that 
\[S_{j+1}(x,i)=f_{j,x,i}\left((S_k(x,i))_{0\le k\le j},U_j(x,i)\right)\]
and $f_{j,x,i}((x_k)_{0\le k\le j},u)=x_j+e_m$ if $u\in[(m-1)\varepsilon, m\varepsilon)$ for some $m\in\{1,\ldots,2d\}$.
We define the extra frogs by setting  for $x\in\Z^d,\ j\ge 0,$
\[
S_{j+1}(x,0):=f_{j,x,1}\left((S_k(x,0))_{0\le k\le j},U_j(x,0)\right).
\]
Let
$T(x,i):=\inf\{j\ge 0: U_j(x,i)\ge 2d\varepsilon\},\ 
    x\in\Z^d,\ i\ge 1$. Then the random variables $T(x,i)$, $x\in\Z^d,\ i\ge 1$
    are independent and geometrically distributed with parameter
    $1-2d\varepsilon$.
Finally, define  for all $x,y,v\in \Z^d$, $o(x):=0\in\Z^d$, $X_v:=v$, and set $\varkappa(x,y):=\varepsilon$ if $\|x-y\|_1=1$ and  $\varkappa(x,y):=0$ otherwise.

We claim that this collection of random variables satisfies the
assumptions of Theorem \ref{012}.  It is obvious that the assumptions
(EL), (UC), (EX) and (Xv) hold. As in the proof of Theorem \ref{01q}, Assumption (ERG) is satisfied since the walks
$(\bar S_j(x,i))_{j\ge 0},\ x\in\Z^d,\ i\ge 1,$ are independent, simple symmetric
random walks on $\Z^d$ which are stopped after the i.i.d.\ times $T(x,i)$. For the same reason $(Z(x))_{x\in\Z^d}$ is stationary. Moreover, $(Z(x))_{x\in\Z^d}$ is independent of the extra frogs. Hence (ID) holds
(even though the environment viewed from the extra frog is not
stationary in general).    Assumption (T) is satisfied since
$P[\bar R_v]=P[\bar R_0]$ for all $v\in V$ and
\begin{equation}\label{bel}
P[\bar R_0]>0
\end{equation}
as we shall show below.

Therefore, Theorem \ref{012} applies. Since $P[R_v]\ge P[\bar R_v]>0$ due to (\ref{bel}), we obtain $P[R_v]= P[\eta(v)\ge 1]$, i.e.\ the claim of Theorem \ref{log}.

It remains to show (\ref{bel}).
To this end we shall first show that if the frogs at 0 are woken up and all the woken up frogs are stopped after their respective times $T(x,i)$, it happens with positive probability that  every site is eventually visited, i.e.
\begin{equation}\label{kn}
P\left[W^0(\eta,\bar S)=\Z^d\right]>0.
\end{equation}
For $x\in \Z^d$ and $r\ge 0$ let
$B(x,r):=\left\{y\in \Z^d: \|y-x\|_1\le r\right\}$ and
$\partial B(x,r):=\left\{y\in \Z^d: \|y-x\|_1= r\right\}$
be the ball and sphere, respectively, with center $x$ and $\ell_1$-radius $r$. Then
\begin{eqnarray*}
P\left[W^0(\eta,\bar S)=\Z^d\right]
&=& P\left[\forall r\ge 1:\ B(0,r)\subseteq W^0(\eta,\bar S)\right]
\ =\ \prod_{r\ge 1}(1-a_r),\quad\mbox{where}\\
a_r&:=&P\left[\partial B(0,r)\not\subseteq W^0(\eta,\bar S)\mid B(0,r-1)\subseteq 
W^0(\eta,\bar S)
\right].
\end{eqnarray*}
Note that $a_r<1$.
Therefore, the above product does not vanish iff the sequence $(a_r)_{r\ge 1}$ is summable. We estimate
\begin{eqnarray*}\nonumber
a_r&\le&
\sum_{y\in \partial B(0,r)}P\left[y\notin W^0(\eta,\bar S)\mid B(0,r-1)\subseteq W^0(\eta,\bar S)
\right]\\
&\le &\sum_{y\in \partial B(0,r)}P\left[y\notin \left\{\bar S_j(z,i): j\ge 0, z\in B(y,r)\cap B(0,r-1), 1\le i\le\eta(z)\right\}\right]\\
&= &\sum_{y\in \partial B(0,r)} \prod_{z\in B(y,r)\cap B(0,r-1)}E\left[P\left[
y\notin \left\{\bar S_j(z,i): j\ge 0, 1\le i\le\eta(z)\right\}\mid \eta(z)\right]\right]\\
&\le &\sum_{y\in \partial B(0,r)}\prod_{k=1}^r \prod_{z\in \partial B(y,k)\cap B(0,r-1)}
E\left[(1-\varepsilon^k)^{\eta(z)}\right]\\
&\le& c_{\ref{dd}}r^{d-1}\prod_{k=1}^r 
\left(E\left[(1-\varepsilon^k)^{\eta(0)}\right]\right)^{c_{\ref{cc}}k^{d-1}}
\ \le\ c_{\ref{dd}}r^{d-1}\prod_{k=1}^r 
\left(\vp(\varepsilon^k)\right)^{c_{\ref{cc}}k^{d-1}},
\end{eqnarray*}
where $c_{\ref{dd}}(d)<\infty$ and  $c_{\ref{cc}}(d)>0$ are constants such that $\#\partial B(0,r)\le \con{dd}r^{d-1}$ and
$\#\partial B(y,k)\cap B(0,r-1)\ge \con{cc}k^{d-1}$ for all $1\le k\le r$ and  $y\in \partial B(0,r)$ and where
$\vp(t):=E[e^{-t\eta(0)}]$ denotes the Laplace transform of $\eta(0)$ at $t\ge 0$. 
It is well-known that $P[\eta(0)\ge t^{-1}]\le 2(1-\vp(t))$ for all $t>0$, see, e.g.\ \cite[Lemma 5.1 (3)]{Kal02}.
Therefore, it suffices to show that
\[ \sum_{r\ge 1}b_r<\infty\qquad\mbox{where}\qquad b_r:=c_{\ref{dd}}r^{d-1}\prod_{k=1}^r \left(1-P[\eta(0)>\varepsilon^{-k}]/2\right)^{c_{\ref{cc}}k^{d-1}}.
\]
We are going to apply Raabe's criterion. Due to (\ref{dipl}) we have for large $r$,
\begin{eqnarray*}
r\left(\frac{b_{r+1}}{b_r}-1\right)
& \le& r\left(
\left(1+\frac 1r\right)^{d-1}\left(1-\frac{c_{\ref{so}}}{2(r+1)^d(\log \varepsilon^{-1})^d}\right)^{c_{\ref{cc}}(r+1)^{d-1}}-1\right)\\
&=&r\left(\left(1+\frac{d-1}{r}+o\left(r^{-1}\right)\right)\left(1-\frac{c_{\ref{so}}\con{neu}(d,\varepsilon)}{r+1}+o\left(r^{-1}\right)\right)-1\right)\\
&=&r\left(\frac{d-1}{r}-\frac{c_{\ref{so}}c_{\ref{neu}}}{r+1}+o\left(r^{-1}\right)\right)
\begin{array}[t]{c}{\longrightarrow}\vspace*{-2mm}\\{\scriptstyle r\to\infty}\end{array}
 d-1-c_{\ref{so}}c_{\ref{neu}},
\end{eqnarray*}
which is less than $-1$ for $c_{\ref{so}}$ sufficiently large.  Consequently, $\sum_rb_r$ and $\sum_ra_r$ are finite and  (\ref{kn}) follows.

 Having proven \eqref{kn}, we obtain (\ref{bel}) if we show that waking up all initial frogs on $\Z^d$ causes
a.s.\ infinitely many of them to visit 0 before they are
stopped. Define the independent events
$A(x,i):=\left\{\exists j\ge 0: \bar S_j(x,i)=0\right\},$
$x\in\Z^d,\ i\ge 1,$ and observe that
$P[A(x,i)\mid \eta]=P[A(x,i)]\ge \varepsilon^{\|x\|_1}$
a.s.\ for all $x,i$. Moreover, (\ref{dipl}) implies 
  \eqref{ii}.  Therefore, a.s.
\[\sum_{x\in\Z^d, 1\le i\le \eta(x)}P[A(x,i)\mid\eta]\ge \sum_{r\ge 0}\varepsilon^r\sum_{\|x\|_1=r}\eta(x)=\infty\]
due to Lemma \ref{bau}.
By the second part of the Borel-Cantelli lemma $P[\,\cdot\mid \eta]$-a.s.\  infinitely many of the events $A(x,i), x\in\Z^d, 1\le i\le \eta(x),$ occur. Consequently,  $P$-a.s.\ infinitely many of the events $A(x):=\cup_{i\le \eta(x)}A(x,i)$ occur.  This proves (\ref{bel}) and concludes the proof of the theorem  for $d\ge 2$.

For the final claim regarding $d=1$ note that in this case every frog trajectory starting at 0 is a.s.\ infinite and therefore covers a.s.\ $\N$ or $-\N$.
Consequently, waking up a frog at 0 results a.s.\ in waking up all frogs on $\N$ or $-\N$.
As above, 
this implies due to 
$E[\log_+\eta(0)]=\infty$ that infinitely many frogs visit 0.
\end{proof}
\begin{problem}\label{ns}{\rm Does Theorem \ref{log} still hold true 
   if we replace the tail condition (\ref{dipl}) by the weaker moment
    condition (\ref{ii})?}
\end{problem}
If the answer to  Problem \ref{ns}  were positive then condition (\ref{ii}) in Theorem \ref{log} would be  sharp as the following result shows.  
\begin{prop}\label{sharp}
  Let $d\ge 1$, $(\eta(x))_{x\in\Z^d}$ be identically distributed, and
\begin{equation}\label{di}
E\left[(\log_+\eta(0))^d\right]<\infty.
\end{equation}
Suppose that $K$ is a stochastic matrix on $\Z^d$ for which there exist $M\in \N$ and $\delta\in(0,M]$ such that $K(x,y)=0$ if $\|x-y\|_1\ge M$ and
\begin{equation}\label{dri}\bigg\|\sum_{y\in\Z^d}K(x,y)y\bigg\|_1\ge \|x\|_1+\delta \qquad\mbox{for all $x\in \Z^d$}.
\end{equation}
  Assume that the frogs
$S_\cdot(x,i)$, $x\in\Z^d,\ i\ge 1,$ evolve as Markov chains with
transition matrix $K$ starting at $x$. Moreover, let the quantities
$\eta(x)$, $S_\cdot(x,i),\ x\in\Z^d,\ i\ge 1,$ be independent.  Then
waking up all the initial frogs on $\Z^d$ results a.s.\ only in
finitely many visits to 0. In particular, 0 is a.s.\ transient.
\end{prop}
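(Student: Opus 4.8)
## Proof Proposal for Proposition~\ref{sharp}

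The plan is to show that the \emph{expected} total number of visits to the origin by frogs, counted over \emph{all} initial frogs (woken up or not), is finite; this clearly implies that a.s.\ only finitely many visits occur. Write the expected number of visits to $0$ by the $i$-th frog starting at $x$ as $\sum_{j\ge 0}P[S_j(x,i)=0] = \sum_{j\ge 0}K^j(x,0) =: G(x,0)$, the Green's function. Since the $\eta(x)$ are independent of the trajectories and identically distributed, the expected total number of visits equals $E[\eta(0)]\sum_{x\in\Z^d}G(x,0)$ if $E[\eta(0)]<\infty$; but under the moment assumption \eqref{di} we need not have $E[\eta(0)]<\infty$, so a more careful argument is required. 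The right quantity to control is
\[
E\Bigg[\sum_{x\in\Z^d}\sum_{i=1}^{\eta(x)}\I_{\{\exists j:\,S_j(x,i)=0\}}\Bigg]
= E\Bigg[\sum_{x\in\Z^d}\eta(x)\,P\big[\exists j\ge 0:\ S_j(x,1)=0\big]\Bigg]
= E[\eta(0)]\sum_{x\in\Z^d}h(x),
\]
where $h(x):=P[\exists j\ge 0:\ S_j(x,1)=0]$ is the hitting probability of $0$ from $x$. So the key estimate is that $h(x)$ decays fast enough in $\|x\|_1$ to compensate for the heavy tail of $\eta(0)$ permitted by \eqref{di}.

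The heart of the matter is therefore a large-deviation bound: because of the drift condition \eqref{dri}, the process $\|S_j(x,1)\|_1$ has a strictly positive drift $\ge\delta$ at every step and increments bounded by $M$, so the walk escapes to infinity linearly and the probability of ever returning to a ball of radius $r$ from distance $n\gg r$ is exponentially small in $n$. Concretely, I would first show there exist constants $C<\infty$ and $\lambda>0$ (depending only on $d,M,\delta$) such that $h(x)\le C\exp(-\lambda\|x\|_1)$ for all $x\in\Z^d$. The cleanest route: the condition \eqref{dri} says $E[\|S_{j+1}\|_1\mid S_j=z]\ge \|z\|_1+\delta$; combined with $\big|\|S_{j+1}\|_1-\|S_j\|_1\big|\le M$, a standard exponential supermartingale argument (choosing $\theta>0$ small so that $E[e^{-\theta(\|S_{j+1}\|_1-\|S_j\|_1)}\mid S_j]\le 1$, using $e^{-\theta u}\le 1-\theta u+\theta^2 M^2 e^{\theta M}/2$ for $|u|\le M$) shows $e^{-\theta\|S_j(x,1)\|_1}$ is a supermartingale; optional stopping at the hitting time of $0$ gives $h(x)\le e^{-\theta\|x\|_1}$ directly (since $e^{-\theta\cdot 0}=1$). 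This is the step I expect to be the main obstacle, only because one has to be slightly careful that the bound on increments of $\|\cdot\|_1$ really follows from $K(x,y)=0$ for $\|x-y\|_1\ge M$ and that the supermartingale argument survives the stopping; everything else is routine.

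Given the bound $h(x)\le e^{-\theta\|x\|_1}$, the sum $\sum_{x\in\Z^d}h(x)$ is finite, but we must still handle the factor $E[\eta(0)]$, which may be infinite under \eqref{di}. Here the resolution is to \emph{not} pull $E[\eta(0)]$ out, but instead to count visits weighted correctly: the expected number of visits to $0$ is bounded by $\sum_{x}h(x)\,G(0,0)\,E[\eta(x)]$ — no wait, this still has the bad factor. The correct fix is to split the sum $\sum_x$ by shells $\partial B(0,r)$ and observe that on shell $r$ there are $O(r^{d-1})$ sites, each contributing $h(x)\le e^{-\theta r}$, so $\sum_x h(x) E[\eta(0)]$ is finite \emph{only if} $E[\eta(0)]<\infty$, which is not assumed; therefore one must instead argue via Lemma~\ref{bau}: condition \eqref{di} is exactly the statement that $\sum_{r\ge 0}e^{-\theta r}\sum_{\|x\|_1=r}\eta(x)<\infty$ a.s.\ (apply Lemma~\ref{bau} with the base $a=e^{-\theta}\in(0,1)$ and exponent $d$; note $E[(\log_+\eta(0))^d]<\infty$ is the hypothesis of that lemma for convergence). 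Then $P[\,\cdot\mid\eta]$-a.s.\ $\sum_{x,\,i\le\eta(x)}P[A(x,i)\mid\eta]\le \sum_{x,\,i\le\eta(x)}e^{-\theta\|x\|_1}<\infty$, so by the \emph{first} Borel--Cantelli lemma, $P[\,\cdot\mid\eta]$-a.s.\ only finitely many of the events $\{\exists j:\ S_j(x,i)=0\}$ occur; hence a.s.\ only finitely many frogs ever visit $0$, so $0$ is a.s.\ transient. This mirrors exactly the divergence argument at the end of the proof of Theorem~\ref{log}, run in reverse (convergence instead of divergence, first Borel--Cantelli instead of second).
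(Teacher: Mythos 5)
Your proposal is correct and follows essentially the same route as the paper: an exponential bound $P[\exists j:S_j(x,i)=0]\le C a^{-\|x\|_1}$ derived from the drift condition \eqref{dri} and the bounded jumps (the paper sums the Green's function using $E[a^{-\|S_j\|_1}]\le(\ell_a(\delta))^j a^{-\|x\|_1}$, while you use an exponential supermartingale with optional stopping --- a minor variant of the same exponential-moment estimate, and your worries about it are unfounded since $\bigl|\|S_{j+1}\|_1-\|S_j\|_1\bigr|\le\|S_{j+1}-S_j\|_1<M$ and Fatou handles the stopping), followed by conditioning on $\eta$, Lemma~\ref{bau} with \eqref{di}, and the first Borel--Cantelli lemma, exactly as in the paper.
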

\begin{proof}
  Let $A(x,i):=\left\{\exists j\ge 0: S_j(x,i)=0\right\}$ for
  $x\in\Z^d,\ i\ge 1.$ Then for every $a>1$
\begin{equation}\label{eth}
P[A(x,i)]\le \sum_{j\ge 0}P[S_j(x,i)=0]=\sum_{j\ge 0} P[a^{-\|S_j(x,i)\|_1}\ge 1]\le \sum_{j\ge 0}E[a^{-\|S_j(x,i)\|_1}].
\end{equation}
For $a> 1$ let $\ell_a:\R\to\R$ be the linear function  whose graph
passes through $(-M,a^M)$ and $(M,a^{-M})$. By convexity,
$a^{-s}\le \ell_a(s)$ for all $s\in[-M,M]$. Note that $\ell_a(s)<1$
for all $s>x_a:=M(a^M-1)(a^M+1)^{-1}$ and that $x_a\searrow 0$ as $a\searrow 1$. Therefore, given $\delta$ as in \eqref{dri},  there is an $a>1$ such that $\ell_a(\delta)<1$. Choose such an $a$. Then 
for any Markov chain $(S_j)_{j\ge 0}$ with transition matrix $K$ and
$S_0=x$,
\begin{eqnarray*}
  E[a^{-\|S_j\|_1}]&=&E\left[E[a^{-(\|S_j\|_1-\|S_{j-1}\|_1)} \mid S_{j-1}]
a^{-\|S_{j-1}\|_1}\right]\\&\le& 
E\left[E[\ell_a(\|S_j\|_1-\|S_{j-1}\|_1) \mid S_{j-1}]a^{-\|S_{j-1}\|_1} \right]\\
                   &\overset{(\ref{dri})}{\le}&\ell_a(\delta)E[a^{-\|S_{j-1}\|_1}]\ \le\ (\ell_a(\delta))^ja^{-\|x\|_1}
\end{eqnarray*}
by induction over $j$.  Consequently, there is due to (\ref{eth}) a
constant $\con{xx}$ such that a.s.\
$P[A(x,i)]\le c_{\ref{xx}}a^{-\|x\|_1}$ for all
$x,i$.  Therefore, by independence, a.s.
\[\sum_{x\in\Z^d, 1\le i\le \eta(x)}P[A(x,i)\mid\eta]=\sum_{x\in\Z^d, 1\le i\le \eta(x)}P[A(x,i)]\le c_{\ref{xx}}\sum_{r\ge 0}a^{-r}\sum_{\|x\|_1=r}\eta(x)<\infty\]
 due to Lemma \ref{bau} and (\ref{di}).  By the Borel-Cantelli lemma
$P[\,\cdot\mid \eta]$-a.s.\ only finitely many of the events
$A(x,i), x\in\Z^d, 1\le i\le \eta(x),$ occur. This implies the claim.
\end{proof}  Note that Theorem \ref{log} and Proposition
  \ref{sharp} resemble Theorems 1.12 and 1.10 in \cite{AMP02}
  respectively. In \cite{AMP02} frogs have finite,
  geometric life times.
\appendix
\section{Discussion of  Conjecture 2 from \cite{GS09}}\label{A}
Conjecture 2 in \cite{GS09} states that a frog model on a vertex
transitive graph $G=(V,E)$ such that the frog numbers $\eta(x)$,
$x\in V\setminus\{o\}$, are i.i.d., there is exactly one active frog at $o$
at time $0$, and the frog dynamics are given by
independent homogeneous random walks satisfies the following zero-one
law for recurrence and transience: either with probability one $o$ is
visited infinitely often or with probability one $o$ is visited only
finitely many times.

If all $\eta(x)$, $x\in V$, were i.i.d.\ and  the
  notions of recurrence and transience were defined as 
  in Definition~\ref{rt} then Theorem~\ref{01} would simply cover
 this conjecture, since a homogeneous walk on a vertex
transitive graph is a transitive Markov chain.   However,
  also the original conjecture follows from Theorem~\ref{01} as we
  shall show now.

 Assume without loss of generality that
  $P[\eta(x)\ge 1]>0\ (x\ne o)$ and that the Markov chain for the individual
  frog dynamics in the \cite{GS09} model is transient.  Use the same
  ingredients in Theorem~\ref{01}. Denote by $R_v$ the event that $v$
  is recurrent according to Definition~\ref{rt}. Suppose that the
  conclusion of Theorem~\ref{01} is that $P[R_v]=0$ for all $v\in V$.
  Then $P[R_o\,|\,\eta(o)\ge 1]=0$.  By monotonicity in the
  initial number of frogs, the fact that the number of frogs at
  each site is finite, and the assumed transience of the individual
  frog dynamics, the probability that $o$ is visited infinitely many
  times (by any frog) starting with a single frog at $o$ is equal
  to $0$ as well.

Suppose now that the conclusion of Theorem~\ref{01} is that
$P[R_v]=P[\eta(v)\ge 1]$ for all $v\in V$. It follows that for every
$m\in\N$ such that $P[\eta(o)=m]>0$ we have
$P[R_0\,|\,\eta(o)=m]=1$. Let
$M:=\min\{m\in\N:\,P[\eta(o)=m]>0\}$. If $M=1$ then the proof of the
conjecture is complete. Suppose $M>1$. We claim
that if we follow, say, the first one of these $M$ frogs and let the
rest $M-1$ frogs remain inactive forever then $o$ will still be
recurrent with probability one.  The argument follows closely the
procedure of removing the extra frog in the proof of Theorem~\ref{012}
after (\ref{80}). Set $P_M[\,\cdot\,]:=P[\,\cdot\,|\,\eta(o)=M]$ and
denote by $(n,s)^{(i)}$, $i\in\{1,\dots,M\}$, the frog configuration
which is obtained from $(n,s)$ by making all but the $i$-th frog at
site $o$ inactive. Note that as in (\ref{incl}) if $o$ is recurrent
for $(n,s)$ then $o$ is recurrent for at least one of $(n,s)^{(i)}$,
$i\in\{1,\dots,M\}$.  Denote by ${\cal G}$ the $\sigma$-algebra
generated by $(\eta(x),S(x,i)),\ x\in V\setminus\{o\},\ i\ge 1$. Then
by the above observation and independence
  \begin{align*}
    0&=P_M[R_o^c]\ge P_M\left[\cap_{i=1}^M\{o\text{ is transient for }(\eta,S)^{(i)}\}\right]\\&=E_M\left[P_M\left[\cap_{i=1}^M\{o\text{ is transient for }(\eta,S)^{(i)}\}\big|\,{\cal G}\right]\right]\\&=E_M\left[\left(P_M\left[o\text{ is transient for }(\eta,S)^{(1)}\big|\,{\cal G}\right]\right)^M\right].
  \end{align*}
  This gives that
  $P_M\left[o\text{ is transient for }(\eta,S)^{(1)}\big|\,{\cal
      G}\right]=0$
  $P_M$-a.s., and after integration we obtain
  that $P_M[o\text{ is transient for }(\eta,S)^{(1)}]=0$. The
  proof of the conjecture is now complete.

\section{Technical results}\label{B}
The following result generalizes \cite[Proposition 7.3]{LP}.
\begin{prop}[\rm\bf i.i.d.\ $\Rightarrow$ ergodic]\label{iid}
  Let $V$ be a countably infinite set and $\Phi\subseteq {\rm Sym}(V)$ be closed under composition and such that
  all orbits $\Phi_x:=\{\vp(x)\,|\, \vp\in\Phi\},\ x\in V,$ are
  infinite.  Let $H(x),\ x\in V,$ be independent random variables
  with values in some measurable space $(\mathbb S,\cal S)$ and
  ${\cal F}=\sigma(H(x),\ x\in V)$.  For each $\vp\in\Phi$ let
  $g_\vp:\mathbb S\to\mathbb S$ be a measurable map and define
  $H_\vp(x):=g_\vp(H(\vp(x)))$, $x\in V$. Finally suppose that
\begin{equation}\label{dis} 
  \forall \vp_1,\vp_2\in\Phi \ \ (H_{\vp_1}(x))_{x\in V}\overset d=(H_{\vp_2}(x))_{x\in V}.
\end{equation}
Then for all $A\in\cal
I:=\left\{A\in\F\,|\,\exists B\in\cal S^{\otimes V}\ \forall
    \vp\in\Phi:\ A
\overset P=
\left\{\left(H_\vp(x)\right)_{x\in V}\in
      B\right\}\right\}$ we have that $P[A]\in\{0,1\}$.
\end{prop}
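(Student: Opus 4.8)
The plan is to show that every $A\in\cal I$ is (up to null sets) independent of itself, i.e.\ that $P[A]=P[A]^2$, which immediately forces $P[A]\in\{0,1\}$. The idea is to approximate $A$ by an event depending on only finitely many of the $H(x)$, and then to use the hypothesis that all $\Phi$-orbits are infinite to produce an element of $\Phi$ that carries that finite set of coordinates onto a disjoint set of coordinates; independence of the $H(x)$ then makes the resulting approximate copy of $A$ independent of the original.

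\emph{Step 1 (finite approximation, uniform over $\Phi$).} Fix $A\in\cal I$ and a witness $B\in\cal S^{\otimes V}$, and fix an arbitrary $\psi_0\in\Phi$ ($\Phi\neq\emptyset$ since $V\neq\emptyset$ and orbits are infinite). Let $\mu$ be the law of $(H_{\psi_0}(x))_{x\in V}$ on $(\mathbb S^V,\cal S^{\otimes V})$; by \eqref{dis} this is also the law of $(H_\vp(x))_{x\in V}$ for every $\vp\in\Phi$. Given $\eps>0$, since the cylinder sets form an algebra generating $\cal S^{\otimes V}$, there are a finite $F\subseteq V$ and $B_F\in\cal S^{\otimes F}$ with $\mu\big[B\,\triangle\,(B_F\times\mathbb S^{V\setminus F})\big]<\eps$. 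Because $A\overset P=\{(H_\vp(x))_{x\in V}\in B\}$ for \emph{every} $\vp\in\Phi$ and $(H_\vp(x))_{x\in V}$ has law $\mu$, the triangle inequality for symmetric differences gives
\[
P\big[A\,\triangle\,\{(H_\vp(x))_{x\in F}\in B_F\}\big]<\eps\qquad\text{for every }\vp\in\Phi ,
\]
and since $H_\vp(x)=g_\vp(H(\vp(x)))$, the event $\{(H_\vp(x))_{x\in F}\in B_F\}$ is measurable with respect to $\sigma\big(H(y):y\in\vp(F)\big)$.

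\emph{Step 2 (the combinatorial input and the conclusion).} Put $G:=\psi_0(F)$, a finite subset of $V$. I claim there is $\vp\in\Phi$ with $\vp(G)\cap G=\emptyset$. Granting this, set $\psi_1:=\vp\circ\psi_0\in\Phi$. Then $A_i:=\{(H_{\psi_i}(x))_{x\in F}\in B_F\}$ satisfies $P[A\,\triangle\,A_i]<\eps$ for $i=0,1$, while $A_0$ is $\sigma(H(y):y\in G)$-measurable and $A_1$ is $\sigma(H(y):y\in\vp(G))$-measurable. Since the $H(y)$, $y\in V$, are independent and $G\cap\vp(G)=\emptyset$, the events $A_0$ and $A_1$ are independent, so $P[A_0\cap A_1]=P[A_0]\,P[A_1]$. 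Combining this with $|P[A_i]-P[A]|<\eps$ and $A\,\triangle\,(A_0\cap A_1)\subseteq(A\,\triangle\,A_0)\cup(A\,\triangle\,A_1)$, hence $|P[A_0\cap A_1]-P[A]|<2\eps$, yields $|P[A]-P[A]^2|<4\eps$; letting $\eps\downarrow0$ gives $P[A]=P[A]^2$ and therefore $P[A]\in\{0,1\}$.

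\emph{The main obstacle} is the combinatorial claim in Step 2, and it is the only place where the assumption that all orbits are infinite is genuinely used. When $\Phi$ is a group this is classical: if no such $\vp$ existed then $\Phi=\bigcup_{u,w\in G}\{\psi\in\Phi:\psi(u)=w\}$ would be a finite union of cosets of the point stabilizers $\{\psi:\psi(u)=u\}$, $u\in G$, so by B.\,H.\ Neumann's lemma some stabilizer would have finite index in $\Phi$, contradicting $\#\Phi_u=\infty$. For a general composition-closed $\Phi$ one argues along the same lines but needs to be more careful: assuming the claim fails, a pigeonhole argument over the finitely many possible "overlap patterns" $\{(u,w)\in G\times G:\vp(u)=w\}$, $\vp\in\Phi$, combined with closure of $\Phi$ under composition and the fact that $\#\Phi\ge\#\Phi_u=\infty$, must be pushed to force an orbit of a point of $G$ to be finite — spelling this out is the technical heart of the argument, and everything else above is routine measure theory.
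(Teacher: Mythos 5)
Your measure-theoretic reduction is sound and close in spirit to the paper's: you approximate $A$ by a cylinder event in the algebra generating $\cal S^{\otimes V}$, uniformly over $\vp\in\Phi$ thanks to \eqref{dis}, whereas the paper conditions on $\si(H_\vp(v_k);k\le n)$ and invokes L\'evy's 0--1 law; both yield $|P[A]-P[A]^2|\le c\,\eps$ via independence of the two approximants supported on disjoint coordinate sets. That part of your argument is complete and correct.

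The genuine gap is exactly where you flag it: the claim that for every finite $G\subseteq V$ there is $\vp\in\Phi$ with $\vp(G)\cap G=\emptyset$ is asserted but not proved, and it cannot be dismissed as routine. Your appeal to B.\,H.\ Neumann's lemma only applies when $\Phi$ is a group; the hypothesis here is merely that $\Phi$ is closed under composition (no identity, no inverses), so point stabilizers and cosets are not available. Your sketch for the semigroup case --- pigeonholing over the finitely many ``overlap patterns'' $\{(u,w)\in G\times G:\vp(u)=w\}$ and somehow forcing a finite orbit --- does not obviously close: knowing that infinitely many $\vp$ share the same nonempty pattern does not by itself bound any orbit, and composing two such $\vp$ need not preserve the pattern. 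The paper handles this as Lemma~\ref{dj} by proving the \emph{asymmetric} statement (for all finite $F,J$ there is $\vp\in\Phi$ with $\vp[F]\cap J=\emptyset$) by induction on $\#F$: assuming failure, the induction hypothesis applied to $F\setminus\{v\}$ against the growing sets $J_n=J\cup\{v_1,\dots,v_n\}$ pins $\vp_n(v)$ into the finite set $J$, pigeonholing gives a $u\in J$ hit infinitely often, and one then composes with a $\psi\in\Phi$ moving $u$ out of $J$ (using that $\Phi_u$ is infinite) while controlling $\psi^{-1}[J]$ by $J_n$. The asymmetric formulation is what makes the induction go through; the symmetric special case you need does not appear to admit a direct one-step argument. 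Until you supply a proof of this combinatorial fact valid for composition-closed $\Phi$, the proof is incomplete.
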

For the proof we shall need the following fact.
\begin{lemma}\label{dj}
  Let $V$ and $\Phi$ satisfy the assumptions of
  Proposition~\ref{iid}.
  Then for each pair of finite subsets $F$ and $J$ of $V$ there
  is a $\vp\in\Phi$ such that $\vp[F]\cap J=\emptyset$.
\end{lemma}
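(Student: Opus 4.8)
The plan is to recast the statement in terms of the $\Phi$-orbit of the tuple of elements of $F$ and then run an induction on $|F|$ in which coordinates are ``frozen'' one at a time. Enumerate $F=\{x_1,\ldots,x_n\}$, put $\bar x:=(x_1,\ldots,x_n)$ and $O:=\{(\varphi(x_1),\ldots,\varphi(x_n))\mid\varphi\in\Phi\}\subseteq V^n$. Finding $\varphi$ with $\varphi[F]\cap J=\emptyset$ is the same as finding a tuple in $O$ no coordinate of which lies in $J$. Three elementary facts are used throughout: $O$ is infinite, since its first-coordinate projection is the orbit $\Phi_{x_1}$; for every $\tau\in\Phi$ the coordinatewise map $\bar y\mapsto(\tau(y_1),\ldots,\tau(y_n))$ carries $O$ into $O$ (this is where ``closed under composition'' enters) and is injective, so it sends infinite subsets of $O$ to infinite subsets of $O$; and for $n\le 1$ the assertion is immediate from the infinitude of orbits.

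I would then argue by induction on $n=|F|$, keeping $V$ and $\Phi$ fixed and $J$ universally quantified, so that the induction hypothesis yields the conclusion for every set of size $<n$ and every finite $J$. Assume toward a contradiction that every $\bar y\in O$ has a coordinate in $J$. Colour $\bar y\in O$ by the pair $(i,y_i)$ with $i$ the least index for which $y_i\in J$; finitely many colours occur, so some infinite $O_0\subseteq O$ is monochromatic, with $y_{i_0}=j_0\in J$ for all its members. Since the orbit of $j_0$ is infinite, choose $\tau_0\in\Phi$ with $\tau_0(j_0)\notin J$ and replace $O_0$ by its image under $\tau_0$: it stays infinite and now has $i_0$-th coordinate constant and outside $J$. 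Iterate: given an infinite subfamily with distinct coordinates $i_0,\ldots,i_{k-1}$ frozen at pairwise distinct values $u_0,\ldots,u_{k-1}\notin J$, recolour by least $J$-coordinate — this least index is necessarily outside $\{i_0,\ldots,i_{k-1}\}$, as the frozen coordinates lie outside $J$ — pass to an infinite monochromatic subfamily on which a new coordinate $i_k$ is constant, equal to some $j_k\in J$, and then apply some $\tau_k\in\Phi$ with $\tau_k[\{u_0,\ldots,u_{k-1},j_k\}]\cap J=\emptyset$; such $\tau_k$ exists by the induction hypothesis, since the bracketed set has $k+1\le n-1$ elements. Carrying this out for $k=0,\ldots,n-2$ freezes all but one coordinate outside $J$, so the remaining coordinate of each tuple in the surviving infinite subfamily must lie in $J$; hence that subfamily injects into the finite set $J$ — a contradiction. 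Therefore $O$ contains a tuple avoiding $J$, which provides the required $\varphi$.

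The point of the freezing device — and the reason the naive induction ``peel off one element of $F$'' does not work — is that composing on the left to relocate one point may push an already well-placed point back into $J$; in the argument above each appeal to the induction hypothesis concerns only the set of already-frozen values together with a single new offending value, which is strictly smaller than $F$, so the induction closes. The remaining issues are purely bookkeeping: the successive frozen indices stay distinct (ensured by always recolouring via the \emph{least} $J$-coordinate), the frozen values stay distinct (they are images under an injection of distinct elements of $F$), and each set handed to the induction hypothesis has fewer than $n$ elements — all immediate.
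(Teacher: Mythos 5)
Your proof is correct, but it resolves the key difficulty by a genuinely different mechanism than the paper's. Both arguments induct on $|F|$, argue by contradiction, and pigeonhole on the finitely many elements of $J$; the difficulty in each case is that a permutation chosen to move one offending point out of $J$ might push already well-placed points back into $J$. The paper peels off a single $v\in F$ and applies the induction hypothesis to $F\setminus\{v\}$ with an \emph{enlarged} family of target sets $J_n=J\cup\{v_1,\dots,v_n\}$; after pigeonholing on the common value $u=\vp_n(v)\in J$ it corrects with a single $\psi$ satisfying $\psi(u)\notin J$, and choosing $n$ with $\psi^{-1}[J]\subseteq J_n$ is exactly what prevents $\psi$ from ruining $\vp_n[F\setminus\{v\}]$. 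You instead keep $J$ fixed, pass to the $\Phi$-orbit of the tuple, and iteratively freeze coordinates by extracting infinite monochromatic subfamilies; your correcting permutation $\tau_k$ comes from applying the induction hypothesis to the set of already-frozen values together with the one new offending value, a set of size at most $n-1$, so it preserves all previous progress by construction. (Your parenthetical that the naive peel-off induction ``does not work'' is slightly overstated --- the paper makes precisely that induction work via the $J_n$ enlargement --- but this does not affect your argument.) The bookkeeping you defer is indeed routine: the frozen values are distinct as images of distinct elements of $F$ under one permutation, and the final infinite subfamily injects into the finite set $J$ through its single unfrozen coordinate, giving the contradiction. Both proofs use only closure under composition and infinitude of orbits; the paper's is shorter, while yours makes the inductive structure and the role of each hypothesis more transparent.
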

\begin{proof}
  We use induction over $k$, the number of elements in $F$. The case
  $k=0$ is trivial. Assume as induction hypothesis that the statement
  has been shown for some $k\ge 0$ and let $F\subset V$ have exactly
  $k+1$ elements.  The proof is by contradiction. Suppose that
  $J\subset V$ satisfies
\begin{equation}\label{elm}
\vp[F]\cap J\not=\emptyset\quad\mbox{ for all $\vp\in\Phi$.}
\end{equation}
Let $(v_n)_{n\ge 1}$ enumerate $V$, set $J_n:=J\cup\{v_1,\ldots,v_n\}$
for all $n\ge 1$, and choose $v\in F$. Then $F':=F\backslash\{v\}$ has
exactly $k$ elements. By the induction hypothesis for each
$n\ge 1$ there is a $\vp_n\in\Phi$ such that
\begin{equation}\label{st}
\vp_n[F']\cap J_n=\emptyset
\end{equation} 
and, therefore, also $\vp_n[F']\cap J=\emptyset$. Then $\vp_n(v)\in J$
for all $n\ge 1$ due to (\ref{elm}). Since $J$ is finite, by the
pigeon hole principle there is a $u\in J$ such that $\vp_n(v)=u$ for
infinitely many $n\ge 1$.  Since $\Phi_u$ is infinite, there is a
$\psi\in\Phi$ such that $\psi(u)\notin J$. Choose $n$ large enough so
that $\psi^{-1}[J]\subseteq J_n$ and $\vp_n(v)=u$. Set
$\vp:=\psi\circ\vp_n\in\Phi$. Then
$\vp(v)=\psi(\vp_n(v))=\psi(u)\notin J$ and, by (\ref{st}),
$\vp[F']=\psi[\vp_n[F']]\subseteq\psi[J_n^c]=\psi[J_n]^c\subseteq
J^c$.
Thus $\vp[F]\subseteq J^c$, and we get a contradiction with (\ref{elm}).
\end{proof}
\begin{proof}[Proof of Proposition \ref{iid}]
  Let $(v_n)_{n\ge 1}$ be an arbitrary enumeration of $V$. Fix
  $A\in \mathcal I$ and let $B$ be a measurable set such that
  $A
\overset P=
\left\{\left(H_\vp(x)\right)_{x\in V}\in B\right\}$ for all
  $\vp\in\Phi$.  Set
  $\F_n(\vp):=\si\left(H_\vp(v_k); 1\le k\le n\right)$ and
  $Z_n(\vp):=P[A\mid\F_n(\vp)]$ for all $\vp\in\Phi$ and $n\ge 1$.
  Fix $\eps>0$ and $\bar{\vp}\in \Phi$.  By Levy's 0--1 law,
  $Z_n(\bar{\vp})$ converges in $\mathcal L^1$ to $\1_A$ as
  $n\to\infty$.  Hence there is an $n\ge 1$ such that
\begin{equation}\label{m1}
E\left[|Z_n(\bar{\vp})-\1_A|\right]\le\eps/2.
\end{equation}
Set $V':=\{v_k: k\le n\}$.
By Lemma~\ref{dj} there is a $\vp'\in\Phi$ such that
 \begin{equation}\label{pd}
  V'\cap \vp'[V']=\emptyset.
 \end{equation}
 We claim that the distribution of $(\1_A,Z_n(\vp))$ is the same for
 all $\vp\in\Phi$.  Indeed,
a.s.
 \begin{align*}
   (\1_A,Z_n(\vp))=&(\1_B((H_\vp(x))_{x\in V}),E[\1_B((H_\vp(x))_{x\in
   V})\,|\,H_\vp(v_k),\,1\le k \le n])\\=&f_n((H_\vp(x))_{x\in V})
 \end{align*}
 for some measurable function $f_n:\mathbb{S}^V\to \{0,1\}\times[0,1]$
 which does not depend on $\vp$ due to (\ref{dis}). The claim follows
 by another application of (\ref{dis}).  Therefore, by (\ref{m1})
\begin{equation}\label{m2}
  E\left[|Z_n(\bar{\vp}\circ\vp')-\1_A|\right]\le\eps/2.
\end{equation}
Consequently,
\begin{eqnarray*}
  \lefteqn{\left|P[A]-P[A]^2\right|=\left|E\left[\left(\1_A-Z_n(\bar{\vp})+Z_n(\bar{\vp})\right)\1_A\right]-P[A]^2\right|}\\
&\le&E\left[\left|\1_A-Z_n(\bar{\vp})\right|\1_A\right]+\left|E\left[Z_n(\bar{\vp})\left(\left(\1_A-Z_n(\bar{\vp}\circ\vp')\right)+Z_n(\bar{\vp}\circ\vp')\right)\right]-P[A]^2\right|\\
&\le&E\left[\left|\1_A-Z_n(\bar{\vp})\right|\right]+E\left[|Z_n(\bar{\vp})|\left|\1_A-Z_n(\bar{\vp}\circ\vp')\right|\right]+\left|E\left[Z_n(\bar{\vp})Z_n(\bar{\vp}\circ\vp')\right]-P[A]^2\right|.
\end{eqnarray*}
Due to (\ref{m1}), the first summand in the line above is less than or equal to $\eps/2$.
The same holds for the second term due to $|Z_n(\bar{\vp})|\le 1$ a.s.\ and (\ref{m2}).
And the last summand vanishes since $Z_n(\bar{\vp})$ and $Z_n(\bar{\vp}\circ\vp')$ are independent because $\cal F_n(\bar{\vp})$ and $\F_n(\bar{\vp}\circ\vp')$ are independent due to (\ref{pd}).
Letting $\eps\searrow 0$ proves the claim.
\end{proof}
\begin{lemma}\label{bau}
Let $d\in\N, 0<c<\infty,$ and $0<a<1$. Assume that $(Y_{i,n})_{i,n\ge 0}$ is an i.i.d.\ family of non-negative random variables. Then 
a.s.,
\begin{equation}\label{as}
E\left[\left(\log_+Y_{0,0}\right)^d\right]<\infty\qquad \mbox{iff}\qquad
\sum_{n\ge 0}a^n\sum_{i=0}^{\lfloor c n^{d-1}\rfloor}Y_{i,n}<\infty.
\end{equation}  
\end{lemma}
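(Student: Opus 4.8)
The plan is to prove both implications by the two Borel--Cantelli lemmas after translating the moment condition into a statement about a series of probabilities. Throughout write $Y:=Y_{0,0}$, fix a parameter $\beta\in(0,1)$ (to be specialized later), set $p_n:=P[Y>\beta^{-n}]$, $m_n:=\lfloor cn^{d-1}\rfloor+1$, and $M_n:=\max_{0\le i\le\lfloor cn^{d-1}\rfloor}Y_{i,n}$, so that $M_n\le\sum_{i=0}^{\lfloor cn^{d-1}\rfloor}Y_{i,n}\le m_nM_n$ and $m_n\asymp n^{d-1}$ for $n\ge1$. The first step is the layer-cake identity: applying $E[Z]=\int_0^\infty P[Z>t]\,dt$ to $Z=(\log_+Y)^d$, using that for $t>0$ one has $(\log_+Y)^d>t\iff Y>e^{t^{1/d}}$, and substituting $t=s^d$ gives $E[(\log_+Y)^d]=d\int_0^\infty s^{d-1}P[Y>e^s]\,ds$; a further substitution $s=u\log(1/\beta)$ together with the monotonicity of $s\mapsto P[Y>e^s]$ (which lets one compare the integral with a series) yields, for every fixed $\beta\in(0,1)$,
\[
E[(\log_+Y)^d]<\infty\quad\Longleftrightarrow\quad\sum_{n\ge1}n^{d-1}p_n<\infty .
\]
The value of $\beta$ is immaterial here since changing it only rescales the integration variable.

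The second step compares this with a series involving the maxima $M_n$:
\[
\sum_{n\ge1}n^{d-1}p_n<\infty\quad\Longleftrightarrow\quad\sum_{n}P[M_n>\beta^{-n}]<\infty .
\]
For ``$\Rightarrow$'' one uses $P[M_n>\beta^{-n}]=1-(1-p_n)^{m_n}\le m_np_n$ together with $m_n\asymp n^{d-1}$ (and that the $n=0$ term is a fixed finite quantity). For ``$\Leftarrow$'', i.e.\ its contrapositive, one uses $1-(1-p)^m\ge\tfrac12\min(mp,1)$, which follows from $(1-p)^m\le e^{-mp}$ and $1-e^{-x}\ge x/2$ on $[0,1]$, and splits into the case $m_np_n\ge1$ for infinitely many $n$ (then the terms $P[M_n>\beta^{-n}]$ stay bounded away from $0$) and the case $m_np_n<1$ eventually (then $P[M_n>\beta^{-n}]\ge\tfrac12m_np_n$ and one compares $\sum m_np_n$ with $\sum n^{d-1}p_n$).

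Now the two halves of the lemma. If $E[(\log_+Y)^d]<\infty$, apply the first two steps with $\beta=\sqrt a$ to obtain $\sum_nP[M_n>a^{-n/2}]<\infty$; by the first Borel--Cantelli lemma there is a.s.\ a finite random $N$ with $M_n\le a^{-n/2}$ for all $n\ge N$, whence
\[
a^n\sum_{i=0}^{\lfloor cn^{d-1}\rfloor}Y_{i,n}\ \le\ a^nm_nM_n\ \le\ m_n\,a^{n/2}\ \le\ (c+1)\,n^{d-1}(\sqrt a)^{\,n}\qquad(n\ge N\vee1),
\]
which is summable, while the finitely many terms with $n<N$ are a.s.\ finite; hence the series converges a.s. Conversely, if $E[(\log_+Y)^d]=\infty$, apply the first two steps with $\beta=a$ to get $\sum_nP[M_n>a^{-n}]=\infty$; the events $\{M_n>a^{-n}\}$ involve pairwise disjoint families $(Y_{i,n})_i$ and are therefore independent, so by the second Borel--Cantelli lemma a.s.\ $M_n>a^{-n}$ for infinitely many $n$, and for each such $n$ one has $a^n\sum_iY_{i,n}\ge a^nM_n>1$, so the series diverges a.s.

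The only genuinely delicate point is the use of two different bases. With base $a$ the crude bound $a^n\sum_iY_{i,n}\le m_n$ is merely polynomial and so not summable, so in the forward direction one must buy geometric decay by passing to base $\sqrt a$; and one must work with the full maximum over the $m_n\asymp n^{d-1}$ variables rather than a single term, because for $d\ge2$ the series $\sum_nP[Y>a^{-n}]$ may converge even though $\sum_nn^{d-1}P[Y>a^{-n}]$ diverges. Everything else is routine integral/series bookkeeping, and the argument uses no earlier result of the paper.
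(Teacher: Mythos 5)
Your proof is correct. I checked the layer-cake computation (the substitutions $t=s^{d}$ and $s=u\log(1/\beta)$ together with monotonicity of $s\mapsto P[Y>e^{s}]$ do give $E[(\log_+Y)^d]<\infty\iff\sum_n n^{d-1}p_n<\infty$ for any fixed base $\beta$), the two-sided comparison $\tfrac12\min(m_np_n,1)\le P[M_n>\beta^{-n}]\le m_np_n$, the independence of the events $\{M_n>a^{-n}\}$ across $n$ (disjoint index families), and the base-change device: with $\beta=\sqrt a$ the forward direction yields the summable majorant $(c+1)n^{d-1}(\sqrt a)^{\,n}$, and with $\beta=a$ the converse yields infinitely many terms exceeding $1$. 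The two remarks you flag as delicate (needing the maximum over $\asymp n^{d-1}$ variables rather than a single one when $d\ge2$, and needing a strictly smaller base in the forward direction) are exactly the right points, and both are handled correctly.

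Your route is genuinely different from the paper's, which gives no self-contained argument at all: the authors cite a classical result for $d=1$ and, for general $d$, invoke Theorem~2 of \cite{Zer02} (a general integrability criterion for infinite weighted sums of heavy-tailed i.i.d.\ random variables) as worked out in Lemma~2.2 of \cite{Bau14}, and then appeal to Kolmogorov's 0--1 law to upgrade positive probability of convergence to almost sure convergence. Your argument replaces that machinery with a direct two-sided Borel--Cantelli analysis of the generation maxima $M_n$; it is elementary, self-contained, and does not need the 0--1 law since the dichotomy falls out of the two Borel--Cantelli lemmas directly. What the paper's citation buys is brevity and access to a more general weighted-sum criterion; what your proof buys is transparency and independence from external references.
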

For $d=1$, this lemma
 is well-known, see \cite[Theorem 5.4.1]{Luk75}
or \cite[Exercise 22.10]{Bil95}.
For $d\ge 1$ it follows from \cite[Theorem 2]{Zer02}, see \cite[Lemma 2.2]{Bau14} for details 
and note that by Kolmogorov's 0--1 law the double sum in (\ref{as}) converges with probability 0 or 1. 
\section{Adding extra frogs}\label{ef}
\begin{lemma}\label{wlog1}
  Let $(\Omega^0,{\cal F^0},P^0)$ be a probability space, $(\mathbb{X},{\cal X})$ a measurable space and $\mathbb S$  a Polish space with Borel $\si$-field $\cal S$. Suppose $X^0:\Om^0\to\mathbb X$ and $Y^0:\Om^0\to\mathbb S$ are
  measurable.
  Then there is a probability space $(\Om,\F,P)$ and
  measurable maps $X,Y$, and $Z$ on $(\Omega,{\cal F})$ such that
\begin{itemize}
\item[(a)]
$(X^0,Y^0)\overset d=(X,Y)$ and
\item[(b)]  $Y$ and $Z$ are i.i.d.\ given $X$.
\end{itemize}
\end{lemma}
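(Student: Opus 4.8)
The plan is to realize the triple $(X,Y,Z)$ as the coordinate maps on $\mathbb{X}\times\mathbb{S}\times\mathbb{S}$, equipped with the measure obtained by disintegrating the law of $(X^0,Y^0)$ over the $\mathbb{X}$-coordinate and then duplicating the resulting conditional law in the two $\mathbb{S}$-coordinates. First I would let $\nu$ denote the law of $(X^0,Y^0)$ on $(\mathbb{X}\times\mathbb{S},\mathcal X\otimes\mathcal S)$ and $\mu$ its first marginal on $(\mathbb{X},\mathcal X)$. Since $\mathbb{S}$ is Polish, hence a Borel space, standard results on regular conditional distributions (see e.g.\ \cite[Chapter 6]{Kal02}) provide a probability kernel $(\nu_x)_{x\in\mathbb{X}}$ from $(\mathbb{X},\mathcal X)$ to $(\mathbb{S},\mathcal S)$ — that is, each $\nu_x$ is a probability measure on $\mathcal S$, and $x\mapsto\nu_x(B)$ is $\mathcal X$-measurable for every $B\in\mathcal S$ — such that $\nu(dx\,dy)=\mu(dx)\,\nu_x(dy)$; equivalently, $\nu_{X^0}(\cdot)$ is a regular conditional distribution of $Y^0$ given $X^0$ under $P^0$.

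Next I would set $\Omega:=\mathbb{X}\times\mathbb{S}\times\mathbb{S}$, $\mathcal F:=\mathcal X\otimes\mathcal S\otimes\mathcal S$, let $X,Y,Z$ be the three coordinate projections, and define
\[
P:=\int_{\mathbb{X}}\bigl(\delta_x\otimes\nu_x\otimes\nu_x\bigr)\,\mu(dx),
\]
i.e.\ the unique probability measure on $(\Omega,\mathcal F)$ with $P[X\in A,\,Y\in B,\,Z\in C]=\int_A\nu_x(B)\,\nu_x(C)\,\mu(dx)$ for $A\in\mathcal X$, $B,C\in\mathcal S$. Checking that this is a bona fide probability measure is routine: $x\mapsto\delta_x\otimes\nu_x\otimes\nu_x$ is a probability kernel from $\mathbb{X}$ to $\Omega$ since $x\mapsto\nu_x(B)\nu_x(C)$ is measurable and a monotone class argument promotes this to all of $\mathcal F$, and integrating a probability kernel against the probability measure $\mu$ yields a probability measure. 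Property (b) then holds by construction: conditioning on the first coordinate, a regular conditional distribution of $(Y,Z)$ given $X=x$ is the product $\nu_x\otimes\nu_x$, so under $P$, given $X$, the variables $Y$ and $Z$ are independent with common conditional law $\nu_X$ — that is, i.i.d.\ given $X$. For (a), taking $C=\mathbb{S}$ above gives $P[X\in A,\,Y\in B]=\int_A\nu_x(B)\,\mu(dx)=\nu(A\times B)$, whence $(X,Y)\overset d=(X^0,Y^0)$.

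I do not expect a genuine obstacle here: the only substantive input is the existence of the disintegration $(\nu_x)_{x\in\mathbb{X}}$, which is exactly where the hypothesis that $\mathbb{S}$ is Polish (equivalently, standard Borel) is used; note that $\mathbb{X}$ is allowed to be an arbitrary measurable space, matching the generality of the statement. Everything else — measurability of the mixed kernel, $\sigma$-additivity of $P$, and the identification of the conditional laws — is formal measure-theoretic bookkeeping, so the proof is a direct construction.
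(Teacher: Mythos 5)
Your construction is exactly the paper's: both disintegrate the law of $(X^0,Y^0)$ into a regular conditional distribution (kernel) of $Y^0$ given $X^0$, which exists because $\mathbb S$ is Polish, and then define $P$ on $\mathbb X\times\mathbb S\times\mathbb S$ by duplicating that kernel in the two $\mathbb S$-coordinates; your formula $P[X\in A,\,Y\in B,\,Z\in C]=\int_A\nu_x(B)\,\nu_x(C)\,\mu(dx)$ is the paper's $E^0[\mu(X^0,B)\,\mu(X^0,C);\,X^0\in A]$ in different notation. The proof is correct and essentially identical to the paper's, differing only in the routine justification of the extension to a measure (monotone class versus a citation to Billingsley).
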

\begin{proof} 
 There is a probability kernel $\mu:\mathbb X\times\mathcal S\to[0,1]$ such that for all $B\in\mathcal S$, $P^0$-a.s.\ $P^0[Y^0\in B\mid X^0]=\mu(X^0,B)$ (see, e.g.\ \cite[Theorem 6.3]{Kal02}).
Let $\Om:=\mathbb X \times \mathbb S\times \mathbb S$, $\F$ be the product $\si$-field on $\Om$, denote by $E^0$ the expectation operator w.r.t.\ $P^0$, and define for 
  $A\in {\cal X}$ and $B,C\in{\cal S}$,
\[
P[A\times B\times C]:=E^0\left[\mu(X^0,B)\, \mu(X^0,C);\, X^0\in A\right].
\]
By \cite[Theorem 11.3]{Bil95}, $P$ can be extended to a probability measure on $\F$. We let $(X,Y,Z)$ be the identity on $\Om$. Then (a) follows from our choice of $\mu$. Moreover, it follows from $X^0\stackrel{d}{=}X$ that $P$-a.s., $P[Y\in B, Z\in C|X]=\mu(X,B)\mu(X,C).$
This implies (b).
\end{proof}
\begin{lemma}[\rm \bf Coupling]\label{kell} 
Let $V$ be countably infinite and suppose that  $\mathbb U$ and $\mathbb S$  are Polish spaces with Borel $\si$-fields $\cal U$ and $\cal S$, respectively. Assume  that for each $v\in V$  there is a probability space $(\Om_v,\F_v,P_v)$,  and random variables $U_v:\Om_v\to\mathbb U$ and $Z_v:\Om_v\to\mathbb S$ such that  $U_v$, $v\in V$, are identically distributed. 

Then there is a probability space $(\Om,\F,P)$ and random variables $U:\Om\to\mathbb U$ and $E_v:\Om\to\mathbb S$, $v\in V$, on it such that 
\begin{equation}\label{gg}
(U,E_v)\overset d=(U_v,Z_v)\quad\mbox{ for all $v\in V$.}
\end{equation}
\end{lemma}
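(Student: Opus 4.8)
The plan is to construct $P$ as a mixture, over the common law of the $U_v$, of product measures built from disintegration kernels. First I would record, using that $\mathbb S$ is Polish, that for every $v\in V$ there is a probability kernel $\kappa_v:\mathbb U\times\mathcal S\to[0,1]$ with $P_v[Z_v\in B\mid U_v]=\kappa_v(U_v,B)$ $P_v$-a.s.\ for all $B\in\mathcal S$ (e.g.\ \cite[Theorem 6.3]{Kal02}). Let $\mu$ be the common distribution of the $U_v$ on $(\mathbb U,\mathcal U)$; this is well defined precisely because the $U_v$ are identically distributed, and this is the only place that hypothesis enters.

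Next, set $\Om:=\mathbb U\times\mathbb S^V$ with the product $\sigma$-field $\F$, and for each $u\in\mathbb U$ let $\nu_u:=\bigotimes_{v\in V}\kappa_v(u,\cdot)$ be the corresponding countable product probability measure on $\mathbb S^V$. A routine Dynkin ($\pi$-$\lambda$) argument, checking the claim first on finite cylinders, shows that $u\mapsto\nu_u(D)$ is $\mathcal U$-measurable for every $D$ in the product $\sigma$-field, so $(u,D)\mapsto\nu_u(D)$ is again a probability kernel. I would then define $P$ on $\F$ by
\[
P[C]:=\int_{\mathbb U}\nu_u\bigl(\{s\in\mathbb S^V:(u,s)\in C\}\bigr)\,\mu(du),\qquad C\in\F,
\]
and let $U$ and $E_v$, $v\in V$, be the coordinate projections of $\Om$ onto $\mathbb U$ and onto the $v$-th copy of $\mathbb S$. (Alternatively, after enumerating $V$, one may assemble $P$ via the Ionescu--Tulcea theorem, with $\mu$ the initial law and the $k$-th kernel $(u,s_1,\dots,s_k)\mapsto\kappa_{v_{k+1}}(u,\cdot)$.)

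To verify \eqref{gg}, fix $v\in V$, $A\in\mathcal U$, $B\in\mathcal S$. Marginalising out all coordinates other than $U$ and $E_v$ --- each remaining factor $\kappa_{v'}(u,\cdot)$ being a probability measure --- gives $P[U\in A,\,E_v\in B]=\int_A\kappa_v(u,B)\,\mu(du)$. On the other hand $P_v[U_v\in A,\,Z_v\in B]=\int\1_{\{U_v\in A\}}\kappa_v(U_v,B)\,dP_v=\int_A\kappa_v(u,B)\,\mu(du)$, since $U_v$ has law $\mu$. Thus the law of $(U,E_v)$ under $P$ and that of $(U_v,Z_v)$ under $P_v$ agree on the $\pi$-system of measurable rectangles $A\times B$, hence everywhere, which is \eqref{gg}.

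The conceptual content is light; the work is bookkeeping. The two points to be careful about are (i) that the disintegration theorem genuinely applies --- this is where Polishness of $\mathbb S$ is used --- and (ii) that $u\mapsto\nu_u$ is measurable, equivalently that the infinite product of the kernels $\kappa_v$ really is a single kernel, so that the mixture integral defining $P$ is legitimate. Neither is difficult, but both should be stated rather than glossed over.
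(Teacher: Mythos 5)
Your construction is correct and is essentially the paper's own proof: both disintegrate each $Z_v$ against $U_v$ via a kernel (using Polishness), exploit that the $U_v$ share a common law $\mu$ to mix, and realize $(U,(E_v)_v)$ on $\mathbb U\times\mathbb S^V$ with the $E_v$ conditionally independent given $U$ with laws $\kappa_v(U,\cdot)$. The only difference is bookkeeping: the paper obtains the measure as the projective limit of the consistent finite-dimensional mixtures $\int_A\prod_{v\in I}\kappa_v(u,B_v)\,\mu(du)$ (citing \cite[Theorem 6.14]{Kal02}), whereas you assemble the same measure as a mixture of infinite product kernels (or via Ionescu--Tulcea); these yield the identical $P$.
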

\begin{proof} 
For all $I\subseteq V$ define the product measurable space
$(\Om_I,\F_I):=(\mathbb U,\cal U)\otimes(\mathbb S, \cal S)^{\otimes I}$. Choose $(\Om,\F):=(\Om_V,\F_V)$ and let $(U,(E_v)_{v\in V})$ be the identity on $\Om$.
Denote by $\nu$ the common distribution of $U_v$, $v\in V$. 
 For each $v\in V$ there is a probability kernel  $\mu_v:\mathbb{U}\times{\cal S}\to[0,1]$ such
  that for all
  $B\in {\cal S}$, $P_v$-a.s.\ $\mu_v(U_v,B)=P_v[Z_v\in B\,|\,U_v]$.
For finite $I\subseteq V$ and $A\in\cal U, B_v\in\cal S$ we set
\[P_I\left[A\times\prod_{v\in I}B_v\right]:=\int_A \prod_{v\in I}\mu_v(x,B_v)\ d\nu(x).\]
This defines a projective family of probability measures $P_I$
on $(\Om_I,\F_I)$. We choose $P$ as its projective limit, see, e.g.\
\cite[Theorem 6.14]{Kal02}. Then (\ref{gg}) follows from the
construction.
\end{proof}

\begin{cor}\label{wlog}
Let $V$ be countably infinite and let $\mathbb W$ and $\mathbb S$  be Polish spaces with Borel $\si$-fields $\cal W$ and $\cal S$, respectively.
   Suppose we are given a probability space  $(\Omega^0,{\cal F^0},P^0)$ and random variables $W^0:\Om^0\to\mathbb W$ and $F^0_v:\Om^0\to\mathbb S$ for each $v\in V$. 

  Then there is a probability space $(\Om,\F,P)$ and random variables 
   $W:\Om\to\mathbb W$ and $F_v, E_v:\Om\to\mathbb S$, $v\in V,$ such that
\begin{itemize}
\item[(a)] $\left(W^0,(F^0_x)_{x\in V}\right)\overset d=\left(W,(F_x)_{x\in V}\right)$ and
\item[(b)]  for all $v\in V$, $F_v$ and $E_v$ are i.i.d.\ given $(W,\check F_v)$, where  $\check F_v:=(F_x)_{x\in V\backslash\{v\}}$.
\end{itemize}
\end{cor}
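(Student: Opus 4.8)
The plan is to combine the two preceding lemmas. Lemma \ref{wlog1} will be used, separately for each $v\in V$, to manufacture a duplicate $E_v$ of $F_v$ that is conditionally i.i.d.\ with $F_v$ given everything else; Lemma \ref{kell} will then be used to glue all of these auxiliary spaces into a single one along their common marginal $(W,(F_x)_{x\in V})$, without disturbing its law. Throughout, the Polishness hypotheses guarantee that the regular conditional distributions and projective limits invoked in Lemmas \ref{wlog1} and \ref{kell} exist: a countable product of Polish spaces is Polish, so $\mathbb S^{V}$, $\mathbb S^{V\setminus\{v\}}$, $\mathbb W\times\mathbb S^{V}$ and $\mathbb W\times\mathbb S^{V\setminus\{v\}}$ are all Polish, the last of these in particular being a measurable space.

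First I would fix $v\in V$ and apply Lemma \ref{wlog1} with the measurable space $(\mathbb X,\mathcal X):=(\mathbb W\times\mathbb S^{V\setminus\{v\}},\ \mathcal W\otimes\mathcal S^{\otimes(V\setminus\{v\})})$ and the Polish space $\mathbb S$, taking $X^0:=(W^0,(F^0_x)_{x\ne v})$ and $Y^0:=F^0_v$. This produces a probability space $(\Omega_v,\mathcal F_v,P_v)$ and measurable maps $X,Y,Z$ with $(X^0,Y^0)\overset d=(X,Y)$ and with $Y,Z$ i.i.d.\ given $X$. Using the product structure of $\mathbb X$ I relabel the coordinates of $X$ as $(W,(F_x)_{x\ne v})$, and set $F_v:=Y$, $E_v:=Z$. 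Reassembling, $(W,(F_x)_{x\in V})$ on $(\Omega_v,P_v)$ has the law of $(W^0,(F^0_x)_{x\in V})$, and $F_v$ and $E_v$ are i.i.d.\ given $(W,\check F_v)$.

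Next, on $(\Omega_v,P_v)$ set $U_v:=(W,(F_x)_{x\in V})$ and $Z_v:=E_v$; by the previous step the variables $U_v$, $v\in V$, all have the common law of $(W^0,(F^0_x)_{x\in V})$, so the hypotheses of Lemma \ref{kell} are met with $\mathbb U:=\mathbb W\times\mathbb S^{V}$ and the Polish space $\mathbb S$. Applying it yields a single probability space $(\Omega,\mathcal F,P)$ carrying $U:\Omega\to\mathbb U$ and $E_v:\Omega\to\mathbb S$, $v\in V$, with $(U,E_v)\overset d=(U_v,Z_v)$ for every $v$; note that Lemma \ref{kell} only controls each pair $(U,E_v)$ individually, which is all we need. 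Relabeling the coordinates of $U$ as $(W,(F_x)_{x\in V})$ via the product structure of $\mathbb U$, item (a) is immediate: $(W,(F_x)_{x\in V})=U\overset d=U_v\overset d=(W^0,(F^0_x)_{x\in V})$.

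For (b) I would observe that, for each fixed $v$, the joint law of the three blocks $(W,\check F_v)$, $F_v$, $E_v$ on $(\Omega,P)$ coincides with the corresponding joint law on $(\Omega_v,P_v)$: indeed $(W,(F_x)_{x\in V},E_v)=(U,E_v)\overset d=(U_v,Z_v)$, and $(W,\check F_v)$ together with $F_v$ is just a measurable relabeling of the coordinates of $U$. Since the statement ``$F_v$ and $E_v$ are i.i.d.\ given $(W,\check F_v)$'' — conditional independence plus equality of the conditional laws — is a property of this joint distribution, and it holds on $(\Omega_v,P_v)$ by construction, it holds on $(\Omega,P)$ as well. This transfer of the ``i.i.d.\ given $\cdot$'' property across an equality in distribution is the only step that is not pure bookkeeping, and it is the point I would be most careful to state precisely; everything else is an assembly of Lemmas \ref{wlog1} and \ref{kell}.
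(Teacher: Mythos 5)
Your proposal is correct and follows essentially the same route as the paper's proof: apply Lemma \ref{wlog1} for each fixed $v$ with $X^0=(W^0,\check F^0_v)$ and $Y^0=F^0_v$, then glue the resulting spaces with Lemma \ref{kell} via $U_v=(X_v,Y_v)$ and $Z_v$, and finally transfer (a) and (b) across the equalities in distribution. Your explicit remark that the ``i.i.d.\ given'' property is a property of the joint law (and hence transfers) is exactly the point the paper leaves implicit in deducing (b) from (b\textprime).
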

\begin{proof}
Fix $v\in V$. Endow $\mathbb X:=\mathbb W\times \mathbb S^{V\backslash\{v\}}$ with its product $\si$-field $\cal X$. Then $X^0:=(W^0,\check F^0_v):\Om^0\to\mathbb X$ is measurable. Set $Y^0:=F^0_v$ and apply Lemma \ref{wlog1} to these quantities. Since we fixed $v$, the outcome may depend on $v$, which we indicate by a subscript $v$. Hence we get from Lemma \ref{wlog1} for every $v\in V$ a probability space $(\Om_v,\F_v,P_v)$ and
  measurable maps $X_v,Y_v$, and $Z_v$ on $(\Omega_v,{\cal F_v})$ such that 
\begin{itemize}
\item[(a\textprime)]
$\left(W^0,(F_x^0)_{x\in V}\right)=(X^0,F^0_v)\overset d=(X_v,Y_v)$ and 
\item[(b\textprime)]  $Y_v$ and $Z_v$ are i.i.d.\  given $X_v$.
\end{itemize}
Now we set $\mathbb U:=\mathbb W\times \mathbb S^V$, denote by $\cal U$ the product $\si$-field on $\mathbb U$ and interpret $U_v:=(X_v,Y_v)$ as random variable on $\Om_v$ with values in $\mathbb U$. By (a\textprime) the $U_v, v\in V,$ are identically distributed. Therefore, Lemma \ref{kell} can be applied and yields random variables $U=:\left(W,(F_x)_{x\in V}\right)$ and $E_v, v\in V,$ on some probability space $(\Om,\F,P)$
such that by (\ref{gg}),
$\left((W,\check F_v),F_v,E_v\right)\overset{d}{=}(X_v,Y_v,Z_v).$
Therefore, (a) follows from (a\textprime) and (b) from (b\textprime).
\end{proof}
\begin{proof}[Proof of Remark \ref{hai}] We apply
    Corollary \ref{wlog} to
    $W^0:=\left(\varkappa, T, \eta,(S_\cdot(\cdot,i))_{i\ge 2}\right)$
    and the first frogs $F_v^0:=S_\cdot(v,1), v\in V,$ and obtain a
    new probability space and random variables
    $(W,(F_x)_{x\in V},(E_x)_{x\in V})$ on it, which satisfy (a) and
    (b). The trajectory of the extra frog at $v\in V$ is defined by
    $S_\cdot(v,0):=E_v$.  Statement (b) then means (EX).  
\end{proof} 
\begin{proof}[Proof of Lemma \ref{ex2}] Fix $v\in V$, set
$W_v:=\left(\varkappa, T,\eta,(S_\cdot(x,i))_{x\in V,\,i\ge 1+\1_{x=v}}\right)$,
$F_v:=S_\cdot(v,1),$
$E_v:=S_\cdot(v,0),$  
and let $f$ be a measurable function on $V^{\N_0}$.  First, we show that for all measurable sets $C\subseteq V^{\N_0}$,
\begin{equation}\label{srw}
P[E_v\in C\mid W_v,F_v,f(E_v)]=P[E_v\in C\mid W_v,f(E_v)]\quad\text{$P$-a.s..}
\end{equation}
For \eqref{srw} it suffices to check that for sets $G$ of the form $\{W_v\in A, F_v\in B, f(E_v)\in D\}$, where  $A, B,$ and $D$ are measurable sets in appropriate spaces, we have 
\begin{equation}\label{show}
P\left[\{E_v\in C\}\cap G\right]=E\left[P[E_v\in C\mid W_v, f(E_v)];G\right].
\end{equation}
The right-hand side of (\ref{show}) is equal to
\begin{align*}
&E\left[P[E_v\in C\cap f^{-1}(D)\mid W_v, f(E_v)]; F_v\in B, W_v\in A\right]\\
=&E\left[E\left[P[E_v\in C\cap f^{-1}(D)\mid W_v, f(E_v)]; F_v\in B, W_v\in A\mid W_v, f(E_v)\right]\right]\\
=&E\left[P[E_v\in C\cap f^{-1}(D)\mid W_v, f(E_v)]\,P\left[F_v\in B\mid W_v, f(E_v)\right]; W_v\in A\right].
\end{align*}
It follows from independence in (EX) and \cite[Proposition 6.6]{Kal02} that a.s.\
$P[F_v\in B\,|\,W_v,f(E_v)]=P[F_v\in B\,|\,W_v]$. Using this fact and
conditioning on $W_v$ inside the above expectation we get that the last
expression above is equal to
\begin{align*}
&E\left[P\left[E_v\in C\cap f^{-1}(D)\mid W_v\right]P\left[F_v\in B\mid W_v\right]; W_v\in A\right]\\
\stackrel{\rm (EX)}{=}&E\left[P\left[F_v\in B, E_v\in C\cap f^{-1}(D)\mid W_v\right]; W_v\in A\right]\\
=&P[ W_v\in A, F_v\in B,E_v\in C\cap f^{-1}(D)],
\end{align*}
which is the same as the left-hand side of (\ref{show}). This proves (\ref{srw}). 

Now let $j\ge 0$ and $y\in V$. Applying (\ref{srw}) to  
$f((s_k)_{k\ge 0}):= (s_k)_{0\le k\le j}$ and $\{E_v\in C\}=\{S_{j+1}(v,0)=y\}$ and subtracting 
$\varkappa(S_j(v,0),y)$ from both sides we obtain
\begin{eqnarray*}
&&P[S_{j+1}(v,0)=y\mid \cal F_j(v,0)]-\varkappa(S_j(v,0),y)\\ 
&=&P\left[S_{j+1}(v,0)=y\mid \si(\cal F_0(v,1)\cup\si(S_k(v,0); 0\le k\le j))\right] 
-\varkappa(S_j(v,0),y)\\  
&\overset d=&P\left[S_{j+1}(v,1)=y\mid \cal F_j(v,1)\right]
-\varkappa(S_j(v,1),y)\ \stackrel{\rm (EL)}{\ge}\ 0\quad\mbox{a.s.}
\end{eqnarray*}
since $(W_v,F_v,E_v)\overset{d}{=}(W_v,E_v,F_v)$ due to (EX). The claim of the lemma follows.
\end{proof}
\vspace*{2mm}

{\bf Acknowledgment:} E. Kosygina was partially
supported by the Simons Foundation through Collaboration Grant for
Mathematicians \# 209493 and Simons Fellowship in Mathematics for 2014-2015. M.\ Zerner was partially supported by the
European Research Council, Grant 208417-NCIRW. This research was supported through the program ``Research in Pairs'' by the Ma\-the\-ma\-ti\-sches Forschungsinstitut Oberwolfach in 2015.

\vspace*{2mm}

{\sc \small
\begin{tabular}{ll}
Department of Mathematics& \hspace*{15mm}Mathematisches Institut\\
Baruch College, Box B6-230& \hspace*{15mm}Universit\"at T\"ubingen\\
One Bernard Baruch Way&\hspace*{15mm}Auf der Morgenstelle 10\\
New York, NY 10010, USA&\hspace*{15mm}72076 T\"ubingen, Germany\\
{\verb+elena.kosygina@baruch.cuny.edu+}& \hspace*{15mm}{\verb+martin.zerner@uni-tuebingen.de+}
\end{tabular}
}

\end{document}